\newcommand*\circled[1]{\tikz[baseline=(char.base)]{\node[shape=circle,draw,inner sep=.8pt] (char) {#1};}}  \setlength{\textwidth}{6.5 in}
\newtheorem{theorem}{Theorem}
\numberwithin{theorem}{section}
\newtheorem{lemma}[theorem]{Lemma}
\newtheorem{proposition}[theorem]{Proposition}
\newtheorem{definition}[theorem]{Definition}
\theoremstyle{remark}\newtheorem{remark}[theorem]{Remark}
\newcommand{\R}{\mathbb{R}}
\newcommand{\Z}{\mathbb{Z}}
\def\SLEkk#1/{$\mathrm{SLE}_{#1}$}
\def\SLEk/{\SLEkk{\kappa}/}
\def\SLEtwo/{\SLEkk2/}
\def\SLE/{$\mathrm{SLE}$}
\def\CLEkk#1/{$\mathrm{CLE}_{#1}$}
\def\CLEk/{\CLEkk{\kappa}/}
\def\CLEtwo/{\CLEkk2/}
\def\CLE/{$\mathrm{CLE}$}
\def\GLEkk#1/{$\mathrm{GLE}_{#1}$}
\def\GLEk/{\GLEkk{\kappa}/}
\def\GLEtwo/{\GLEkk2/}
\def\GLE/{$\mathrm{GLE}$}
\def\Ito/{It\^o}
\def \E {{\bf E}}
\def\Var{\mathrm{Var}}
\def\H{\hspace{.007in}\circled{$\mathrm{H}$}\hspace{.007in}}
\def\C{\hspace{.007in}\circled{$\mathrm{C}$}\hspace{.007in}}
\def\h{\hspace{.007in}\framebox[12pt]{$\mathrm{H}$}\hspace{.007in}}
\def\ch{\hspace{.007in}\framebox[12pt]{$\mathrm{C}$}\hspace{.007in}}
\def\x{\hspace{.007in}\framebox[12pt]{$\mathrm{F}$}\hspace{.007in}}
\def\cou{{\mathcal C}}
\def\dis{{\mathcal D}}
\title{
Quantum gravity and inventory accumulation}
\author{{\sc Scott Sheffield\thanks{Massachusetts Institute of Technology.
Partially supported by NSF grants DMS 064558 and
OISE 0730136.}}
\\
{\it 77 Massachusetts Ave., Room 2-180}\\
{\it Cambridge, MA 02139}\\
{\it USA} }
\date{}
\begin{document}
\maketitle
\begin{abstract}
We begin by studying inventory accumulation at a LIFO (last-in-first-out) retailer with two products.  In the simplest version, the following occur with equal probability at each time step: first product ordered, first product produced, second product ordered, second product produced.  The inventory thus evolves as a simple random walk on $\mathbb Z^2$.  In more interesting versions, a $p$ fraction of customers orders the ``freshest available'' product regardless of type.  We show that the corresponding random walks scale to Brownian motions with diffusion matrices depending on $p$.

We then turn our attention to the {\em critical Fortuin-Kastelyn random planar map model}, which gives, for each $q>0$, a probability measure on random (discretized) two-dimensional surfaces decorated by loops, related to the $q$-state Potts model.  A longstanding open problem is to show that as the discretization gets finer, the surfaces converge in law to a limiting (loop-decorated) random surface.  The limit is expected to be a {\em Liouville quantum gravity} surface decorated by a {\em conformal loop ensemble}, with parameters depending on $q$.  
Thanks to a bijection between decorated planar maps and inventory trajectories (closely related to bijections of Bernardi and Mullin), our results about the latter imply convergence of the former in a particular topology.  A phase transition occurs at $p = 1/2$, $q=4$.
\end{abstract}

\newpage

\section{Introduction}
A planar map is a connected planar graph together with an embedding into the sphere (which we identify with the complex plane $\mathbb C \cup \{ \infty \}$), defined up to topological deformation.  Self loops and multi-edges are allowed.

\begin {figure}[!ht]
\begin {center}
\includegraphics [width=1.5in]{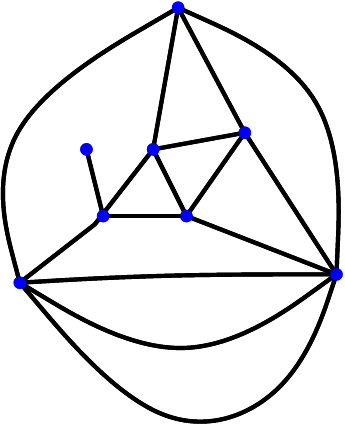}
\end {center}
\end {figure}

There is a significant literature on {\em random} planar maps of various types.  As we illustrate later, planar maps are in one-to-one correspondence with planar quadrangulations, which in turn can be interpreted as Riemannian surfaces, obtained by gluing together unit squares along their boundaries (see Figure \ref{planarmapfigs}).  If we also declare a random subset of the edges of a planar map to be ``open'', we can generate a set of loops on the Riemannian surface that separate open clusters from dual clusters (see Figures \ref{planarmapfigstree} and \ref{planarmapfigsloops}).  We are interested in the ``scaling limits'' of these random loop-decorated surfaces when the number of unit squares tends to infinity.



However, we approach the problem from an unconventional direction.  We begin in Sections \ref{s.inventory} and \ref{s.inventoryproofs} by stating and proving a theorem about inventory accumulation at a LIFO (last-in-first-out) retailer with two products.  Given a certain model for production and sales, in which a $p$ fraction of customers always ``orders fresh'' (i.e., takes the most recently added product regardless of type), we find that the time evolution of the two product inventories scales to a two-dimensional Brownian motion with a diffusion matrix depending on $p$.  (See Theorem \ref{t.brownianlimit} and Figure \ref{burgerplots}.)  We view this result as interesting in its own right.  We also find a surprising phase transition: when $p \geq 1/2$, there are no macroscopic inventory {\em imbalances}.  That is, while the total inventory fluctuates by about $\sqrt n$ after $n$ steps, the {\em difference} between the two product inventories fluctuates by $o(\sqrt n)$.

Section \ref{s.bijection} then presents a bijection between inventory accumulation trajectories and possible instances of so-called critical Fortuin-Kasteleyn (FK) random planar maps.  These are random planar maps --- together with random distinguished edge subsets --- whose laws depend on a parameter $q \in (0,\infty)$ (closely related to the $q$-state Potts model), which turns out to be related to $p$ by $p = \frac{\sqrt q}{2 + \sqrt q}$.  As mentioned above, we may interpret these maps as loop-decorated surfaces.  From this point of view, the inventory-trajectory central limit theorem mentioned above will become a scaling limit theorem about the corresponding loop-decorated surfaces.  Specifically, it will describe (an infinite volume version of) the limiting law of a certain tree and dual tree that are canonically associated to a discretized loop-decorated surface and that encode the structure of the surface.  Our bijection specializes to a classical bijection of Mullin when the distinguished edge subset is required to form a spanning tree of the planar map \cite{MR0205882,MR2285813}.  It is equivalent to a construction by Bernardi when the structure of the planar map is fixed (see Section 4 of \cite{MR2438581}).\footnote{Precisely, Bernardi constructs a many-to-one map from the set of planar maps with distinguished edge subsets to the set of planar maps with distinguished spanning trees and provides a way to understand the pre-image of a {\em single} planar map with a distinguished spanning tree.  Mullin's bijection gives a way of enumerating the set of {\em all} planar maps with distinguished spanning trees.  At the end of \cite{MR2428901}, Bernardi suggests (as a future research project) trying to combine his construction with one of the available enumerations of spanning-tree-decorated planar maps, such as Mullin's bijection.  This is essentially what we do: our inventory trajectory encoding can be understood as a particularly simple way to unify the Mullin and Bernardi constructions.}

The proofs in this paper are discrete and elementary, relying only on standard results in probability (such as the optional stopping theorem).
We remark, however, that we view this paper as part of a larger program to relate random planar maps to continuum objects such as Liouville quantum gravity and the Schramm-Loewner evolution.
We will not discuss these ideas in the body of the paper, but we provide a brief explanation of this point of view in an appendix.  Let us note that since this paper was first posted to the arXiv, there have been many additional papers that have used the theory developed here to prove the convergence the random planar map models to continuum models (involving conformal loop ensembles and Liouville quantum gravity) in certain topologies \cite{dms2014mating, quantum_spheres, gwynnemaosun, finitevolumeestimates, strongertopology, finitevolumelimit}. Other works have described tail exponents of the planar map models \cite{berestyckilaslierray}, a generalization to more than two burger types \cite{multiburger}, a further study of the infinite critical-FK random map \cite{2015arXiv150201013C}, and a related sandpile result \cite{sunwilsonsandpile}.

We also remark that there is a vast operations research literature on inventory management protocols, including various schemes involving two products, but to our knowledge this is the first paper to address this particular model, and also the first to make the connection between inventory trajectories and planar maps.

\medbreak {\noindent\bf Acknowledgments.}  We thank Olivier Bernardi, Bertrand Duplantier, Ewain Gwynne, Nike Sun, Xin Sun, and Sam Watson for very helpful comments on drafts of this paper. We thank an anonymous referee for proposing many improvements that were incorporated into the text.

\section{Inventory trajectories: setup and theorem statement} \label{s.inventory}
In this section we describe a random walk on a particular semi-group (related, as we will see in Section \ref{s.bijection}, to random planar maps) and study its scaling limit.  We interpret the walk as a simple model for inventory accumulation at a LIFO (last-in-first-out) retailer with two product types and three order types (first product, second product, and ``flexible''/``freshest available'') arriving at random times.  As a convenient mnemonic, we refer to the two products as {\em hamburgers} and {\em cheeseburgers} (or {\em burgers} collectively).\footnote{We use this particular metaphor in part for linguistic simplicity (one noun, two prefixes) and in part because the chute stacked with hamburgers and cheeseburgers (produced in a back room, ordered in a front room) is something of an inventory management icon.
}

\subsection{Defining the semigroup} \label{ss.semigroup}

Write $\Theta = \{ \C, \H, \ch, \h, \x\}$.  We view $\Theta$ as an alphabet of symbols that represent, respectively, a cheeseburger, a hamburger, a cheeseburger order, a hamburger order, and a ``flexible'' (either hamburger or cheeseburger) order.  Informally, a word $W$ in $\Theta$ describes a day in the life of a restaurant: for example, the word $\C\H\ch\ch\x\ch\C\h\H$ above describes a day in which first someone produced a cheeseburger (to put on the top of a ``stack'' of burgers), then someone produced a hamburger, then two people ordered cheeseburgers, one ordered ``freshest available,'' another ordered a cheeseburger, someone produced a cheeseburger, someone ordered a hamburger, and someone produced a hamburger.  Informally, we say that whenever a burger is produced, it is added to the top of the stack, and whenever an order is placed it is fulfilled (if possible) by removing the highest matching burger from the stack (but an order that cannot be filled immediately remains unfilled---because the impatient customer leaves the restaurant, say).

To describe this formally, we view the elements of $\Theta$ as generators for a certain (associative) semigroup $\mathcal G$, each element of which can be represented by a word $W$ in the alphabet $\Theta$ (with the empty string $\emptyset$ as a left and right identity).  This is the semigroup of words modulo four ``order fulfillment'' relations
$$\C\ch = \H\h = \C\x = \H\x = \emptyset,$$
and two ``commutativity'' relations
$$\C\h = \h\C,$$ \hspace{.2in} $$\H\ch = \ch\H.$$  A word $W$ in $\Theta$ is called {\em reduced} if no element of $\{\C, \H \}$ appears to the left of any element of $\{ \ch,\h,\x \}$.  In other words, it contains a list of orders followed by a list of burgers, e.g.,
$$W = \ch\h\h\x\h\x\H\C\H\H.$$

\begin{proposition} \label{p.reducedword}
For each finite-length word $W$ in $\Theta$ there is a unique reduced word $\overline W$ that is equivalent to $W$ modulo the above relations.
Thus, $\mathcal G$ is in one-to-one correspondence with the set of reduced words.
\end{proposition}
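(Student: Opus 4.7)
The plan is to construct an explicit normal-form map $\Phi\colon\Theta^*\to\Theta^*$ whose image lies in the set of reduced words, and then to establish two things: (i) $\Phi(W)\equiv W$ in $\mathcal G$, which gives existence of a reduced representative; and (ii) $\Phi$ is invariant under every defining relation, so that $\Phi$ descends to a function on $\mathcal G$. Since $\Phi$ will fix every reduced word by construction, its common value on an equivalence class is then the unique reduced representative of that class.

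To define $\Phi$, I would scan $W$ from left to right while maintaining a state $(O,B)$, in which $O$ is a queue of unfulfilled orders in $\{\ch,\h,\x\}$ and $B$ is a stack of unmatched burgers in $\{\C,\H\}$, both initially empty. A newly read burger is pushed onto the right end of $B$. A newly read $\ch$ erases the rightmost $\C$ of $B$ if one exists and is otherwise appended to the right end of $O$; a $\h$ is handled symmetrically with the roles of $\C$ and $\H$ swapped; and a newly read $\x$ pops the rightmost symbol of $B$ if $B$ is nonempty and is otherwise appended to $O$. Set $\Phi(W):=OB$ at the end of the scan; the output is reduced by construction. Verifying $\Phi(W)\equiv W$ in $\mathcal G$ reduces to checking one scan step at a time, and the only nontrivial cases are the $\ch$ and $\h$ steps: for a $\ch$ one writes the portion of $B$ above the rightmost $\C$ (or all of $B$ if no $\C$ is present) as a string of $\H$'s, slides $\ch$ leftward across it via $\H\ch=\ch\H$, and then either annihilates with $\C\ch=\emptyset$ or places $\ch$ at the right end of $O$.

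For uniqueness it suffices to show that $\Phi(W_1 L W_2)=\Phi(W_1 R W_2)$ for every defining relation $L=R$. Because the scan is deterministic and depends on $W_1$ only through the state $(O,B)$ it produces, this collapses to a finite state-level check: starting from an arbitrary $(O,B)$, the scan must leave the algorithm in identical states after processing $L$ and after processing $R$. The four fulfillment relations are essentially immediate, because a burger just pushed onto $B$ is erased by the matching order that follows. The main obstacle will be the two commutativity relations $\C\h\leftrightarrow\h\C$ and $\H\ch\leftrightarrow\ch\H$, for which one must split on whether $B$ already contains a burger of the ``opposite'' type and verify that the algorithm's skip-past-mismatching-burgers rule produces identical outputs on both sides of the relation. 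Once this invariance is established, $\Phi$ descends to $\mathcal G$, and since $\Phi$ fixes every reduced word we conclude $W=\Phi(W)=\Phi(W')=W'$ for any two equivalent reduced words $W$ and $W'$.
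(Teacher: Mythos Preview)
Your proposal is correct and follows essentially the same approach as the paper: the map $\Phi$ you define is exactly the paper's inductive construction of $\overline W$ (scan left to right, maintain an order queue and a burger stack, with the same update rules), and your invariance check under the defining relations is the paper's observation that ``adding a $\C$ or $\H$ and immediately consuming it is equivalent to doing nothing, and reversing the order of an adjacent pair of the form $\C\h$ or $\H\ch$ in $W$ also has no effect on $\overline W$.'' Your write-up is slightly more explicit in verifying that $\Phi(W)\equiv W$ (via sliding $\ch$ past $\H$'s, etc.), which the paper leaves implicit, but the arguments are otherwise the same.
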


\begin{proof} Before establishing the uniqueness part of Proposition \ref{p.reducedword}, we establish existence by noting that for each $W \in \Theta$ we can explicitly construct a specific reduced word $\overline W$ inductively as follows.  If $W$ has length zero or one, then we take $\overline W = W$.  Now suppose $\overline W$ has been defined for all $W$ of length $k$ or less.  Then we extend the definition to words of length $k+1$ as follows.  Write $\overline{W \C} = \overline W \C$ and $\overline{W \H} = \overline W \H$, and let $\overline{W \ch}$, $\overline{W \h}$ or $\overline{W \x}$ be obtained by excising (respectively) the rightmost $\C$, $\H$, or ``either $\C$ or $\H$'' symbol from $\overline W$ --- or, if no such symbols exists, appending (respectively) a $\ch$, $\h$ or $\x$ to the right of the list of orders in $\overline W$.

To give a concrete example, consider a map $X: \mathbb Z \to \Theta$.  For $a \leq b$ we will write
 $$X(a,b) := \overline{X(a)X(a+1)X(a+2) \ldots X(b)}.$$
The following illustrates the sequences $X(1,k)$ corresponding to a particular $X(k)$ sequence:

\begin{center}
\begin{tabular}{llr|l}
$X(1) = \,$\C & $ X(1,1) = $ & &\C \\
$X(2) = \,$\H & $ X(1,2) = $ & & \C\H \\
$X(3) = \,$\ch & $ X(1,3) = $ & & \H \\
$X(4) = \,$\ch & $ X(1,4) = $ & \ch & \H \\
$X(5) = \,$\x & $ X(1,5) = $ & \ch &  \\
$X(6) = \,$\ch & $ X(1,6) = $ & \ch \ch & \\
$X(7) = \,$\C & $ X(1,7) = $ & \ch \ch & \C \\
$X(8) = \,$\h & $ X(1,8) = $ & \ch \ch \h & \C \\
$X(9) = \,$\H & $ X(1,9) = $ & \ch \ch \h & \C \H \\
\end{tabular}
\end{center}

If we interpret $W=X(1)X(2) \ldots X(n)$ as a ``day in the life of a restaurant'' then the corresponding $\overline W = X(1,n)$ (in this case $\ch\ch\h\C\H$) contains the unfulfilled orders (in the order they were added --- again, we assume that customers leave the restaurant without a burger if their orders cannot be fulfilled immediately) followed by the unconsumed burgers (in the order they were added).  One may imagine the unconsumed burgers to be arranged in a ``stack'' or a ``chute''.\footnote{
An Internet search for ``burger chute'' turns up many images of multi-lane chutes (which allow burgers to be sorted by type). We stress that our model envisions a {\em single-lane} chute in which each burger is added at the top (so that the order of the remaining burgers corresponds to the order in which they were added) and each order is filled from the top (by taking away the highest suitable burger and letting the remaining burgers slide down).}

Next we claim that if $W_1$ and $W_2$ are equivalent words (modulo the order fulfillment and commutativity relations (i.e., they correspond to the same element of $\mathcal G$) then we will have $\overline{W_1} = \overline{W_2}$.  To see this, observe that inserting a consecutive pair of the form $\C \ch$, $\H \h$, $\C \x$ or $\H \x$ into a word $W$ has no effect on $\overline{W}$, and also that reversing the order of an adjacent pair of the form $\C \h$ or $\H \ch$ in $W$ has no effect on $\overline W$.  This (and the fact that $\overline W = W$ when $W$ is reduced) establishes the uniqueness in Proposition \ref{p.reducedword} and implies that $\mathcal G$ is in one-to-one correspondence with the set of reduced words.
\end{proof}

We also note that the procedure used to construct the reduced form of $X(1,n)$ inductively from $X$ has a ``reverse direction'' analog.  Consider the following example

\begin{center}
\begin{tabular}{llr|l}
$X(0) \,\,\,= \,\,$\ch & $ X(0,0) = $ & \ch & \\
$X(-1) = \,$\H & $ X(-1,0) = $ & \ch & \H \\
$X(-2) = \,$\H & $ X(-2,0) = $ & \ch & \H \H \\
$X(-3) = \,$\C & $ X(-3,0) = $ &    & \H \H \\
$X(-4) = \,$\x & $ X(-4,0) = $ & \x & \H \H  \\
$X(-5) = \,$\ch & $ X(-5,0) = $ & \ch \x & \H \H \\
$X(-6) = \,$\H & $ X(-6,0) = $ & \ch & \H \H \\
$X(-7) = \,$\C & $ X(-7,0) = $ & & \H \H \\
$X(-8) = \,$\C & $ X(-8,0) = $ & & \C \H \H \\
\end{tabular}
\end{center}

From this point of view, when we add a new $X(-k) \in \{ \ch, \h, \x \}$, we are adding this element to the left of the stack of orders, and when we add a new $X(-k) \in \{\C, \H \}$, we are annihilating the leftmost matching order (or adding the burger to the left of the list of unconsumed burgers, if there is no available matching order).  The sequence $X(1,k)$ has a ``burger stack'' whose length can go up and down as $k$ increases but an ``order stack'' whose length can only increase as $k$ increases; similarly, the sequence $X(-k,0)$ has an order stack whose length can go up and down as $k$ increases but a burger stack whose length can only increase as $k$ increases.

\subsection{Defining matches in random infinite words}

Let $X(n)$ be i.i.d.\ random variables, indexed by $n \in \Z$, each of
which takes values in $\{\C,\H,\ch,\h,\x \}$ with respective probabilities $\{1/4, 1/4, (1-p)/4, (1-p)/4, p/2 \}$ for some fixed $p \in[0,1]$ (so that a $p$ fraction of the orders are of type $\x$).  Let $\mu$ denote the corresponding probability measure on
the space $\Omega$ of maps from $\Z$ to $\Theta$.


If a burger that is added at time $m$ is consumed at time $n$, we will say that $m$ and $n$ are a {\em match} and write $m=\phi(n)$ and $n = \phi(m)$.  In other words, if we consider $X(m') \ldots X(n')$ to represent a ``day in the life of the restaurant'' as described above for any $m' \leq m$ and $n' \geq n$, then $m$ and $n$ are a match if and only if the burger added at time $m$ is consumed at time $n$ on that day.  This definition does not depend on the values of $X(k)$ for $k < m$ or $k > n$, since it is equivalent to the statement that $X(m) \in \{\C, \H \}$ and $n$ is the smallest integer greater than $m$ for which $X(m,n)$ does not contain a burger of the same type as $X(m)$.  This in turn holds if and only if one of the following four things happens:

\begin{enumerate}
\item $X(m) = \H$, $X(n) = \h$, and $X(m+1,n-1)$ is a word containing only the letters $\C$ and $\ch$, e.g.,
$$X(m+1, n-1) = \ch\ch\ch\ch\C\C\C\C\C.$$
\item $X(m) = \C$, $X(n) = \ch$, and $X(m+1,n-1)$ is a word containing only the letters $\H$ and $\h$, e.g.,
$$X(m+1, n-1) = \h\h\h\h\H\H\H.$$
\item $X(m) = \H$, $X(n) = \x$ and $X(m+1, n-1)$ is a word containing only the letter $\ch$, e.g.,
$$X(m+1, n-1) = \ch\ch\ch\ch.$$
\item $X(m) = \C$, $X(n) = \x$ and $X(m+1, n-1)$ is a word containing only the letter $\h$, e.g.,
$$X(m+1, n-1) = \h\h\h\h\h\h\h\h.$$
\end{enumerate}

If $X(m) \in \{\C,\H\}$ and $m$ has no match (i.e., the burger added at time $m$ is never consumed --- which would be the case, for example, if we had $X(k) \in \{\C,\H\}$ for all $k > m$) --- we write $\phi(m) = \infty$.  If $X(n) \in \{\ch,\h,\x\}$ and $n$ has no match (i.e., the order at time $n$ is unfulfilled, no matter how far back in time one starts) then we write $\phi(n) = -\infty$.

For example, in the sequence \begin{center}
\begin{tabular}{llr|l}
$X(1) = \,$\C & $ X(1,1) = $ & &\C \\
$X(2) = \,$\H & $ X(1,2) = $ & & \C\H \\
$X(3) = \,$\ch & $ X(1,3) = $ & & \H \\
$X(4) = \,$\ch & $ X(1,4) = $ & \ch & \H \\
$X(5) = \,$\x & $ X(1,5) = $ & \ch &  \\
$X(6) = \,$\ch & $ X(1,6) = $ & \ch \ch & \\
$X(7) = \,$\C & $ X(1,7) = $ & \ch \ch & \C \\
$X(8) = \,$\h & $ X(1,8) = $ & \ch \ch \h & \C \\
$X(9) = \,$\H & $ X(1,9) = $ & \ch \ch \h & \C \H \\
\end{tabular}
\end{center}
described above, we have $\phi(3) = 1$; $\phi(1)=3$ and $\phi(2) = 5$; $\phi(5) = 2$, but the values of $\phi(4), \phi(6), \phi(7), \phi(8), \phi(9)$ necessarily lie outside the interval $\{1,2,\ldots,9\}$ and are not determined by $X(1), X(2), \ldots, X(9)$.

\begin{proposition} \label{p.allmatched} It is $\mu$ almost surely the case that for every $m \in \mathbb Z$, we have $\phi(m) \not\in \{-\infty, \infty\}$.  In other words, every $X(j)$ has a unique match, almost surely, so that $\phi$ is an involution on $\mathbb Z$.
\end{proposition}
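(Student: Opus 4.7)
The plan is to reduce to the recurrence of simple random walk on $\mathbb{Z}$. Set $d(k) = +1$ if $X(k) \in \{\C, \H\}$ and $d(k) = -1$ if $X(k) \in \{\ch, \h, \x\}$. Since $\mu$ gives burgers and orders each probability $1/2$, the partial sums $S(n) := \sum_{k=1}^n d(k)$ form a standard simple random walk on $\mathbb{Z}$, so $\liminf_n S(n) = -\infty$ almost surely. Writing $B(n), O(n)$ for the burger and order stack sizes of $X(1, n)$, one has $S(n) = B(n) - O(n)$ and $O(n) \geq -S(n)$, so $O(n) \to \infty$ almost surely.

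By translation invariance it suffices to show $\mu(\phi(0) \in \{-\infty, +\infty\}) = 0$. I would use the time-reversal invariance of the i.i.d.\ sequence (which exchanges forward matching of burgers with backward matching of orders) together with the symmetry $\C \leftrightarrow \H$, $\ch \leftrightarrow \h$ to reduce the whole statement to proving that, conditional on $X(0) = \C$, the initial $\C$ is almost surely consumed at some finite time in the forward process.

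For this, I would couple two processes fed the same letters $X(1), X(2), \ldots$: a ``with'' process starting from stack $[\C]$ at time $0$, and a ``without'' process starting empty. A direct case check shows they evolve identically except that the initial $\C$ of the ``with'' process is consumed at the first time $T$ such that either $X(T) = \ch$ and the ``without'' stack contains no cheeseburger at time $T-1$, or $X(T) = \x$ and the ``without'' stack is empty at time $T-1$. Equivalently, the initial $\C$ is consumed iff the count of unmatched $\ch$'s plus the count of unmatched $\x$'s in the ``without'' process ever reaches $1$.

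The main obstacle is the last step: showing that this combined count tends to infinity almost surely. The total ``without'' order stack goes to infinity by the first paragraph, and the ergodicity of the shift pins down the asymptotic rate of each order-type component as a deterministic constant; what must be ruled out is the degenerate scenario in which all the growth is absorbed by unmatched $\h$'s. When $p=0$ this is immediate, since the $\C/\ch$ and $\H/\h$ subprocesses decouple and each is a recurrent lazy simple random walk, so the cheeseburger count hits $0$ infinitely often and unmatched $\ch$'s accumulate. When $p > 0$ I would combine the simple random walk recurrence of $S$ (producing arbitrarily large collapses of the stacks) with a Borel--Cantelli argument: at each collapse of either the total stack or the cheeseburger count, the next letter is a $\ch$ or an $\x$ with strictly positive conditional probability, incrementing one of the two relevant counts and forcing their sum to diverge.
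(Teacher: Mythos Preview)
Your reduction to showing ``the without process eventually has an unmatched $\ch$ or $\x$'' is correct and is essentially the same observation the paper makes: if the $\C$ at time $0$ is never consumed, then the order stack of $X(1,n)$ consists only of $\h$ symbols for every $n$.  The gap is in how you resolve this.

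The Borel--Cantelli step does not go through as written.  The recurrence of $S(n)$ tells you $\liminf S(n)=-\infty$, which forces $O(n)\to\infty$, but it does \emph{not} force the burger stack $B(n)$ to hit $0$ infinitely often, nor does it force the cheeseburger count $B_C(n)$ in the burger stack to hit $0$ infinitely often.  A new record minimum of $S$ can perfectly well occur with $B(n)>0$: for instance the sequence $\H\,\ch\,\ch$ reaches $S=-1$ with burger stack $[\H]$.  On the bad event you are trying to rule out (order stack all $\h$'s forever) you only know that $B_C>0$ at every $\ch$-arrival time and $B>0$ at every $\x$-arrival time; neither of those says anything about the times in between.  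Your appeal to ergodicity to pin down the asymptotic composition of the order stack is also not justified: the quantities ``fraction of $\ch$'s (or $\x$'s) among the first $K$ unmatched orders'' are not ergodic averages of a fixed function under the shift, so the ergodic theorem does not apply directly.  (Indeed, the paper later proves a statement of exactly this type, Lemma~\ref{l.stackfraction}, but only under the extra hypothesis $\mathbb E[|E|]<\infty$, using the renewal structure of the excursion words.)

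The paper's resolution is different and avoids this difficulty entirely.  Rather than showing directly that a fixed $\C$ is matched, it argues globally: if some $\C$ at time $m$ is unmatched then the order stack of $X(m+1,n)$ is all $\h$'s and grows without bound; if some $\H$ at time $m'$ is unmatched the order stack of $X(m'+1,n)$ is all $\ch$'s and grows without bound.  These two cannot coexist, because multiplying on the left by the fixed finite word $X(m+1,m')$ can delete only boundedly many orders.  Hence the events ``some $\C$ is unmatched'' and ``some $\H$ is unmatched'' are disjoint; each is shift-invariant, so by the ergodic zero--one law each has probability $0$ or $1$; by the $\C\leftrightarrow\H$ symmetry they have equal probability; disjointness then forces both to be $0$.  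This is exactly the missing idea in your argument: you need symmetry and zero--one at the level of the whole configuration, not a direct i.o.\ argument at a fixed starting point.

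A secondary issue: your time-reversal reduction of the order case to the burger case is not as clean as you suggest, because $\x$ breaks the burger/order duality (there is no ``flexible burger'').  The backward matching of a $\ch$ at time $0$ is governed by prepended $\C$'s finding their way past intervening $\x$'s and $\ch$'s, which is not the mirror image of a $\C$ waiting for a $\ch$ or $\x$ in the forward direction.  The paper handles the order case by saying a ``similar argument (in the reverse direction)'' works, meaning one reruns the incompatibility-plus-zero--one argument in the backward process, not that one invokes a literal symmetry.
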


\begin{proof} We first claim that this holds whenever $X(m) \in \{\C,\H \}$.  Observe that the net number of burgers (i.e., the number of burger symbols minus the number of order symbols) added between times $m$ and $n$, as a function of $n$, is a simple random walk on $\mathbb Z$.  It follows that there will almost surely exist values of $n>m$ for which this quantity is arbitrarily negative, and hence $X(m+1,n)$
contains an arbitrarily long sequence of orders.  (Recall that the number of orders in $X(m+1,n)$ is non-decreasing as $n$ increases.) If $X(m) = \C$, then the first time that a $\ch$ or $\x$ is added to this list of orders will be a time at which $X(m)$ is consumed; thus, on the event that $m$ has no match it is almost surely the case that the number of $\h$ orders in
 $X(m+1,n)$
 tends to infinity as $n \to \infty$ while the number of $\ch$ orders remains zero.  If this happens for some $m$, then there cannot be an $m'$ for which the same thing happens with the roles of hamburgers and cheeseburgers reversed (since $X(m+1,m')$ is a fixed finite length word --- appending it on the left cannot remove an arbitrarily long sequence of $\ch$ elements from the corresponding reduced word).  Thus, it is almost surely the case that either every cheeseburger added (at any integer time) is ultimately consumed or every hamburger added (at any integer time) is ultimately consumed.  Since each of these two events is translation invariant, the zero-one law for translation invariant events (see any textbook with an introduction to ergodic decompositions, e.g., \cite{georgii2011gibbs}) implies that each has probability zero or probability one.  As observed above, the union of these two events has probability one, so by symmetry each of them separately has $\mu$ probability one.  Thus, $\mu$ a.s.\ {\em every} burger of either type is ultimately consumed, which implies the claim.  A similar argument (in the reverse direction) shows that $\phi(n)$ is a.s.\ finite whenever $X(n) \in \{\ch,\h,\x \}$.
\end{proof}

The following is an immediate consequence of the above construction:

\begin{proposition} \label{p.finiteunmatched} The reduced word $X(1,n)$ contains precisely those $X(k)$ corresponding to the $k \in \{1,2,\ldots,n\}$ for which $\phi(k) \not \in \{1,2,\ldots,n\}$.  In other words, it contains the list of unmatched orders (in order of appearance in the sequence $X(1), X(2), \ldots, X(n)$) followed by the ordered list of unmatched burgers (in order of appearance in $X(1), X(2), \ldots, X(n)$).
\end{proposition}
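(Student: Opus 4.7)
The plan is to proceed by induction on $n$, synchronizing the inductive construction of $\overline{W}$ from the proof of Proposition \ref{p.reducedword} with the restriction of $\phi$ to $\{1,\ldots,n\}$. The base case $n=1$ is immediate: $\phi(1)\ne 1$ (since $\phi$ swaps burger-positions with order-positions), so position $1$ is unmatched in $\{1\}$ and correctly appears as the lone entry $X(1,1)=X(1)$.

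For the inductive step I split on the nature of $X(n)$. If $X(n)\in\{\C,\H\}$, then $X(1,n)=X(1,n-1)\cdot X(n)$; here $\phi(n)>n$ and $\phi(k)\ne n$ for any $k<n$ (either $X(k)$ is an order with $\phi(k)<k<n$, or $X(k)$ is a burger whose match is an order, not the burger $X(n)$), so the unmatched-in-$\{1,\ldots,n\}$ indices are exactly the corresponding set for $n-1$ together with $\{n\}$, matching the update to the reduced word. If $X(n)\in\{\ch,\h,\x\}$, I split further. In the subcase where $X(1,n-1)$ contains no burger consumable by $X(n)$, the construction appends $X(n)$ to the order list, and I argue $\phi(n)\notin\{1,\ldots,n-1\}$ by contradiction: if $\phi(n)=m$ with $1\le m<n$, then $X(m)$ would be a consumable burger with $\phi(m)=n$, so by induction $X(m)$ would appear in $X(1,n-1)$, contradicting the subcase. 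In the remaining subcase, $X(1,n)$ is obtained from $X(1,n-1)$ by excising the rightmost consumable burger, at some original index $m$; granting $\phi(n)=m$, the inductive hypothesis delivers the claim, since the pair $(m,n)$ is the only newly internally matched pair and the involution property of $\phi$ forbids any other index from changing status.

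The main obstacle is the verification $\phi(n)=m$ in this final subcase. I would approach it using the criterion recalled just before the proposition: $\phi(m')=n'$ iff $n'$ is the smallest integer exceeding $m'$ such that $X(m',n')$ contains no burger of the same type as $X(m')$. By the inductive hypothesis applied at stage $n-1$, the burger $X(m)$ survives in $X(1,n-1)$, so $\phi(m)\ge n$; and setting $m^*=\phi(n)\le n-1$, the same hypothesis makes $X(m^*)$ a consumable burger appearing in $X(1,n-1)$, forcing $m^*\le m$ by the rightmost choice. The equality $m^*=m$ is the desired conclusion, while the strict inequality $m^*<m$ can be ruled out using the identity $X(m^*,n)=\overline{X(m^*,m-1)\cdot X(m,n)}$, which guarantees that any burger type appearing in the burger list of $X(m,n)$ also appears in the burger list of $X(m^*,n)$: when $X(m)=X(m^*)$, this directly contradicts the criterion $\phi(m^*)=n$, since $\phi(m)>n$ supplies a surviving burger of type $X(m)=X(m^*)$ in $X(m,n)$; the case $X(m)\ne X(m^*)$, possible only when $X(n)=\x$, is handled analogously by tracking the two burger types through the sub-reduction. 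This concatenation-persistence fact is the real technical content, and I expect it to be the main step requiring care.
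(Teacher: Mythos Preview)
Your argument is essentially correct, and it is considerably more detailed than what the paper offers: the paper gives no proof at all, simply declaring the proposition ``an immediate consequence of the above construction'' (i.e., of the inductive definition of $\overline W$ in Proposition~\ref{p.reducedword} together with the explicit match criteria listed just before the statement). Your induction, which synchronizes the step-by-step construction of $X(1,n)$ with the evolution of the set $\{k \le n : \phi(k)\notin\{1,\dots,n\}\}$, is precisely the natural way to make that claim rigorous, and you have correctly isolated the only nontrivial point: that when $X(n)$ is an order and the reduction excises the rightmost consumable burger (at original index $m$), one has $\phi(n)=m$.

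One small presentational gap: in that final subcase you write ``setting $m^*=\phi(n)\le n-1$, the same hypothesis makes $X(m^*)$ a consumable burger appearing in $X(1,n-1)$, forcing $m^*\le m$.'' This tacitly assumes $m^*\ge 1$, whereas a priori $\phi(n)$ could lie in $\{\ldots,-1,0\}$. This is not a genuine problem: if $m^*\le 0$ then $m^*<1\le m$ automatically, and your subsequent argument ruling out $m^*<m$ via the concatenation identity $X(m^*,n)=\overline{X(m^*,m-1)\cdot X(m,n)}$ works verbatim for all $m^*<m$, regardless of sign. The key fact you use --- that left-multiplication by any word preserves (as a suffix) the burger list of the right factor --- is indifferent to where $m^*$ sits, and the inputs $\phi(m)\ge n$ (from the inductive hypothesis) and $\phi(m)\ne n$ (from injectivity of $\phi$ and $m\ne m^*$) are already in hand. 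So the argument closes once you note this; no new idea is needed.
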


\subsection{Infinite stacks and random walks: main theorem}

By analogy with Proposition \ref{p.finiteunmatched}, we define $X(-\infty,n)$ to be the ordered sequence of $X(k)$ for which $k \leq n$ but $\phi(k) > n$.  Informally, we can write
$$X(-\infty,n) = \overline{\ldots X(-3) X(-2) X(-1) X(0) X(1) X(2) \ldots X(n)},$$
and interpret $X(-\infty,n)$ as the reduced form word corresponding to the product of all $X(k)$ with $k \leq n$.
We view $X(-\infty, n)$ as a semi-infinite stack of $\C$ and $\H$ symbols, indexed by the negative integers; it has a unique top element but no bottom element.  (The number of elements in $X(-\infty,n)$ is $\mu$ a.s.\ infinite for all $n$ by Proposition \ref{p.allmatched} --- if it were some finite value $k$, then with positive probability there would be a consecutive sequence of $k+1$ orders after time $n$, and at least one order would have no match.)

\begin {figure}[!ht]
\begin {center}
\includegraphics [width=4in]{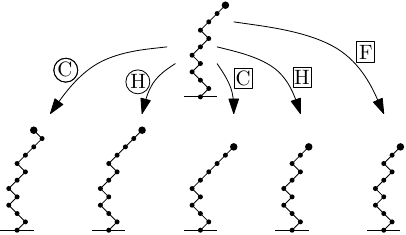}
\caption {\label{stackchange} Above: top few elements on the stack $X(-\infty, n-1)$ represented by an up-left edge for each cheeseburger and an up-right edge for each hamburger.  Below: top few elements of $X(-\infty,n)$, depending on value of $X(n)$.   The uppermost dot above (shown slightly larger) is positioned at a location $A_{n-1} = (\dis_{n-1},\cou_{n-1})$ describing the net change in inventory since time zero. The new uppermost dots below are positioned at locations $A_n = (\dis_n,\cou_n)$.}
\end {center}
\end {figure}

We similarly write
$$X(n,\infty) := \overline{X(n) X(n+1) \ldots},$$
which we interpret to mean the ordered sequence of $X(k)$ for which $k \geq n$ but $\phi(k) < n$.  It is natural to represent $X(n,\infty)$ as a sequence of $\{\ch,\h,\x \}$ values indexed by the positive integers.  While the $X(-\infty,n)$ can be interpreted as a semi-infinite stack of burgers waiting to be consumed (as $n$ increases in time), the stack $X(n,\infty)$ can be interpreted as a semi-infinite queue of customers waiting to be served (as $n$ decreases in time).  A useful equivalent definition is that $X(n,\infty)$ is the limit of the order stacks of $X(n,m)$ (which only increase in length as $m$ increases, with new orders being added on the right), as $m \to \infty$.  Similarly, $X(-\infty,n)$ is the limit of the burger stacks of $X(m,n)$ (which only increase in length as $m$ decreases, with new burgers being added on the left) as $m \to -\infty$.


Next, define $$Y(n) := \begin{cases} X(n) & \hspace{.1in} X(n)  \in \{\C, \H, \ch, \h \} \\ \ch & \hspace{.1in} X(n) = \x, \,\,\, X(\phi(n)) = \C  \\ \h & \hspace{.1in} X(n) = \x, \,\,\, X(\phi(n)) = \H \end{cases}.$$  In other words, $Y(n)$ is obtained from $X(n)$ by replacing each $\x$ with a $\ch$ or $\h$, depending on which burger type was actually consumed by the $\x$ order.  For $a \leq b$ we also write
$$Y(a,b) := \overline{Y(a)Y(a+1)\ldots Y(b)},$$
and observe that this is the same as $X(a,b)$ except that each $\x$ is replaced with the corresponding $\ch$ or $\h$ symbol.

For every word $W$ in the symbols $\{\C,\H,\ch,\h, \x \}$ we write $\cou(W)$ for the net burger {\em count} (i.e., the number of $\{\C,\H\}$ symbols minus the number of $\{\ch,\h,\x \}$ symbols in $W$).  Analogously, if $W$ has no $\x$ symbols, then we define $\dis(W)$ to be the net {\em discrepancy} of hamburgers over cheeseburgers (i.e., the number of $\{\H,\ch\}$ symbols minus the number of $\{\C,\h\}$ symbols).

\begin{definition} \label{andef} Given the infinite $X(n)$ sequence, let $\cou_n$ be the integer valued process defined by $\cou_0 = 0$ and $\cou_n - \cou_{n-1} = \cou(Y(n))$ for all $n$.  Similarly write $\dis_0=0$ and $\dis_n - \dis_{n-1} = \dis(Y(n))$.  Thus $\cou_n$ and $\dis_n$ keep track of the net change in the burger count and the burger discrepancy since time zero.  When $n \geq 0$ we have $\cou_n = \cou(Y(1,n))$ and $\dis_n = \dis(Y(1,n))$.  (For this purpose, we will write $Y(1,0) = \emptyset$ by convention.) When $n < 0$ we have $\cou_n = -\cou(Y(n+1,0))$ and $\dis_n = -\dis(Y(n+1),0)$.  As a shorthand and slight abuse of notation, we will also write, when $a$ and $b$ are integers,
$$\cou(a) = \cou(Y(a)), \,\,\,\, \cou(a,b) = \cou(Y(a,b)), \,\,\,\, \dis(a) = \dis(Y(a)), \,\,\,\, \dis(a,b) = \dis(Y(a,b)).$$
We also write $A_n = (\dis_n, \cou_n)$ for integer $n$.  We extend the definition to the real numbers (by piecewise linear interpolation) so that $t \to A_t$ is an infinite continuous path.
\end{definition}

Figure \ref{stackchange} suggests a natural way to visualize the evolution of $X(-\infty,n)$: here $X(-\infty,n)$ is represented by a stack of up-left edges (one for each $\C$) and up-right edges (one for each $\H$) ending at the location $A_n$.
If $p=1$, then every time $\cou_n$ decreases, it backtracks along $X(-\infty,n)$, and every time $\cou_n$ increases, the corresponding change in $\dis_n$ is decided by a coin toss.  (This is related to a process called the {\em Brownian snake}, see e.g.\ \cite{MR1339740}.)  In this case, for each fixed $n$, the burger stack $X(-\infty,n)$ has the law of a sequence of i.i.d.\ elements of $\{\C, \H \}$.  When $p=0$, the process $A_n$ is just a simple random walk on the set of integer pairs $(x,y)$ for which $x+y$ is even, and the stack $X(-\infty,n)$ encodes information about the past history of $A_n$.  (Informally, the ``head of the snake'' moves around and drags the body along with it in a way that maintains the requirement that the body always consists of a sequence of up-left and up-right edges, as in Figure \ref{stackchange}.)  When $p \in (0,1)$ the transition probabilities are averages of the extreme cases $p=0$ and $p=1$.

Our main scaling limit result concerns the random process $A_t$ of Definition \ref{andef}.

\begin{theorem} \label{t.brownianlimit}
As $\epsilon \to 0,$ the random functions $\epsilon A_{t/\epsilon^2}$ converge in law (with respect to the $L^\infty$ metric on any compact interval) to \begin{equation} \label{e.brownianlimit} (B^1_{\alpha t}, B^2_{t}), \end{equation} where $B^1_t$ and $B^2_t$ are independent standard one-dimensional Brownian motions and $$\alpha = \max\{1-2p,0 \}.
$$
\end{theorem}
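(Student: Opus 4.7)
The count coordinate is immediate. At every step $X(n)$ is either a burger ($\H$ or $\C$; probability $1/2$, contributing $+1$ to $\cou$) or an order ($\h, \ch, \x$; probability $1/2$, contributing $-1$), independently of $p$ and of the past; hence $\cou_n$ is a simple symmetric random walk on $\Z$, and Donsker's theorem gives $\epsilon\,\cou_{t/\epsilon^2} \Rightarrow B^2_t$ in the uniform topology on compact intervals. The content of the theorem therefore lies in $\dis_n$ and in the joint limit.

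For $\dis_n$ the plan is to use a Doob decomposition driven by the sign $\sigma_{n-1}\in\{\pm 1\}$ of the top of $X(-\infty, n-1)$ ($+1$ for $\H$, $-1$ for $\C$). Setting $d_n \defeq \dis(Y(n))$, summation over the five values of $X(n)$ gives
\[
\E\bigl[d_n \mid \ev{F}_{n-1}\bigr] \;=\; -\tfrac{p}{2}\,\sigma_{n-1}, \qquad d_n^2 = 1,
\]
because the contributions from $\H,\C,\h,\ch$ cancel by symmetry while an $\x$ event contributes $-\sigma_{n-1}$. Hence $M_n \defeq \dis_n + \tfrac{p}{2}\sum_{k=0}^{n-1}\sigma_k$ is a martingale with bounded increments and predictable quadratic variation $\langle M\rangle_n = (1-p^2/4)\,n$, and the problem reduces to analyzing the stationary mean-zero process $(\sigma_k)$ (mean zero by the $\H\leftrightarrow\C$ symmetry of $\mu$).

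The main step is the covariance $C(j) \defeq \E[\sigma_0 \sigma_j]$. Applied to a single burger, the $\H\leftrightarrow\C$ symmetry implies that the signs of any two \emph{distinct} burgers are uncorrelated, so $C(j)$ equals the probability that the top burger at time $0$ coincides with the top burger at time $j$. This probability can be read off from the lattice-path representation of Figure \ref{stackchange}: the top is preserved precisely during intervals in which no net push occurs above it and no typed or flexible order drains it, which translates into an excursion event for the count walk $\cou$ refined by the rates $(1-p)/4, (1-p)/4, p/2$ of $\h, \ch, \x$ orders. Assembling the resulting enumeration in
\[
\Var(\dis_n) \;=\; (1-\tfrac{p^2}{4})\,n + \tfrac{p^2}{4}\,\Var\!\Bigl(\sum_{k=0}^{n-1}\sigma_k\Bigr) - p\,\Cov\!\Bigl(M_n,\sum_{k=0}^{n-1}\sigma_k\Bigr)
\]
yields $\Var(\dis_n) = \alpha\,n + o(n)$ with $\alpha = \max\{1-2p,0\}$. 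The phase transition at $p=1/2$ reflects a delicate cancellation: above this threshold the drift variance and the negative cross-term exactly offset the $(1-p^2/4)n$ martingale contribution at leading order.

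Given this variance identity, $\epsilon\,\dis_{t/\epsilon^2} \Rightarrow B^1_{\alpha t}$ follows from the martingale central limit theorem applied to $M_n$ together with a stationary CLT for $\sum\sigma_k$, and tightness is standard since increments are bounded. Joint convergence to $(B^1_{\alpha t}, B^2_t)$ with \emph{independent} Brownian motions reduces to showing $\Cov(\dis_n, \cou_n) = o(n)$, a further symmetry-based covariance computation. The principal obstacle is the stationary analysis of $(\sigma_k)$: the top-of-stack process is not Markov on its own, and the interplay between typed ($\h,\ch$) and flexible ($\x$) orders at general $p\in(0,1)$ must be unraveled via an excursion/renewal decomposition of the count walk $\cou$ in order to pin down the constant $\alpha$ correctly and, in particular, to produce the sharp phase transition at $p = 1/2$.
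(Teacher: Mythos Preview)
Your Doob decomposition is correct and a sensible starting point: with $\sigma_{n-1}$ the sign of the top of $X(-\infty,n-1)$, one indeed has $\E[d_n\mid\ev F_{n-1}]=-\tfrac p2\sigma_{n-1}$, so $M_n=\dis_n+\tfrac p2\sum_{k=0}^{n-1}\sigma_k$ is a bounded-increment martingale with $\langle M\rangle_n=(1-p^2/4)n$. But from here on the proposal is really an outline, not a proof, and the place where it breaks down is precisely the ``principal obstacle'' you name in your last paragraph.

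The claim that $C(j)=\E[\sigma_0\sigma_j]$ equals the probability that the top burgers at times $0$ and $j$ coincide is \emph{not} a symmetry triviality. Flipping the type of a single burger $X(m)$ does not preserve the law of the process: it alters the matching structure (an $\h$ that used to consume $X(m)$ may now pass through it, and so on), so there is no measure-preserving map that negates exactly one of $\sigma_0,\sigma_j$ on $\{T_0\ne T_j\}$. Even granting that identity, you never actually compute $\sum_j C(j)$ or the cross-covariance with $M_n$; ``assembling the resulting enumeration'' and the assertion that for $p>1/2$ the terms ``exactly offset at leading order'' are not calculations. Nor does the martingale CLT for $M_n$ plus a ``stationary CLT for $\sum\sigma_k$'' give convergence of $\dis_n$ without quantitative control on the correlations of $(\sigma_k)$, which is again the missing ingredient. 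Finally, $\Cov(\dis_n,\cou_n)=o(n)$ would give only uncorrelated limits; you still need an argument (e.g.\ infinite divisibility of the increments) to get joint Gaussianity and hence independence.

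The paper's route is genuinely different and sidesteps the top-of-stack process entirely. It introduces the backward stopping time $J=\min\{j\ge 1: X(-j,-1)\ \text{contains a burger}\}$ and the scalar $\chi=\E|X(-J,-1)|\in[1,2]$; a short optional-stopping calculation yields $\E[\dis(0)\dis(-J,-1)]=-\chi p/2$, and because $\dis(-n,-J-1)$ is independent of $\dis(0)$ on $\{J<n\}$ (one can flip $\H\!\leftrightarrow\!\C$ on the \emph{segment} $X(-J,0)$, which contains no $\x$), this already gives $\Var\dis_n=(1-\chi p)n+o(n)$ once a tail term is controlled. The phase transition then becomes the dichotomy $\chi=2$ versus $\chi<2$: the first forces $p\le 1/2$ and gives $\alpha=1-2p$, while the second (established through a chain of excursion/renewal lemmas showing various ``empty-stack'' times have finite mean) forces $\Var\dis_n=o(n)$ and hence $\chi=1/p$, $p\ge 1/2$. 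Joint convergence is then obtained by an infinite-divisibility argument plus tightness from monotonicity bounds. If you want to salvage your approach, the analogue of the paper's $\chi$-dichotomy is what you would need to establish summability (or not) of $C(j)$, and that is where all the work lies.
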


\begin {figure}[!ht]
\begin {center}
\includegraphics [height=4in]{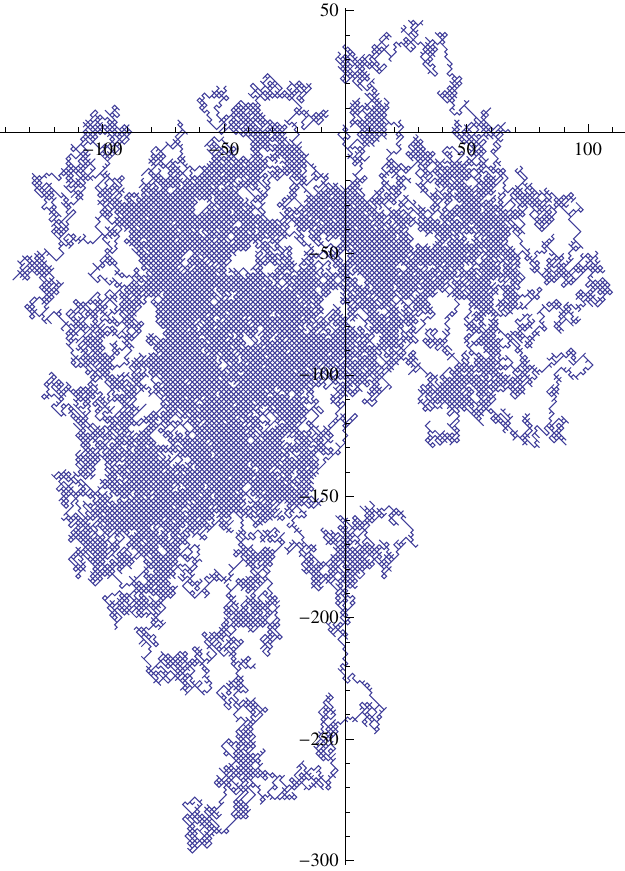}\hspace{.1in}
\includegraphics [height=6in]{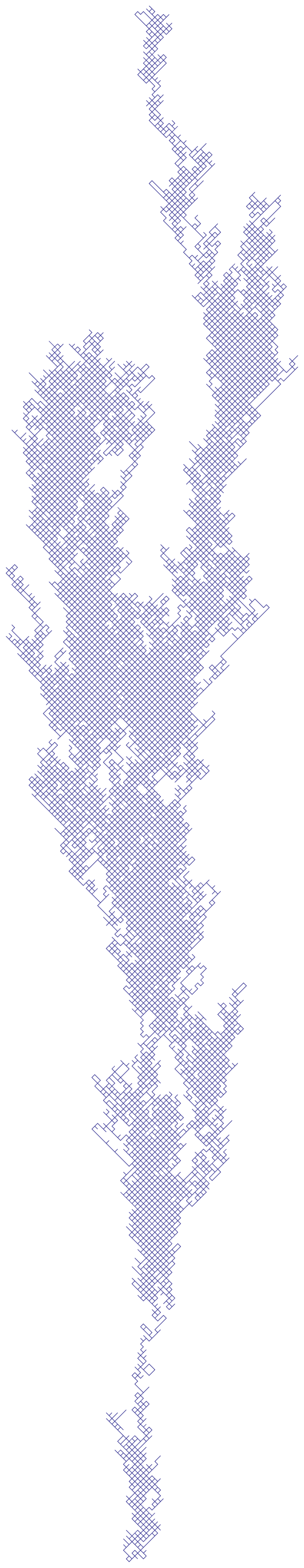}\hspace{.1in}
\includegraphics [height=5in]{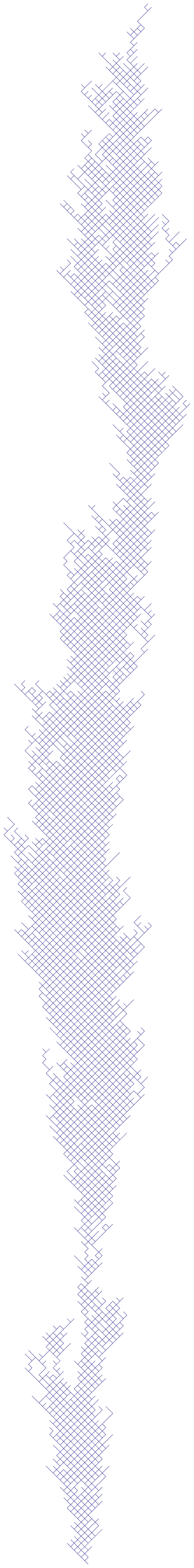}
\caption {\label{burgerplots} Sample trajectory traced by $A_n = (\mathcal D_n, \mathcal C_n)$ as $n$ ranges from $0$ to $100000$ when $p=.25$ (left).  Similar trajectories (axes not shown) when $p=.75$ (middle) and $p=1$ (right).  In each case we use the same initial data: $X(-\infty,0)$ is an infinite stack alternating between burger types.  In each case $X(1), \ldots, X(100000)$ are then chosen from $\mu$ and $(\mathcal D_n, \mathcal C_n)$ updated accordingly for $n \in \{1,2,\ldots, 100000\}$.  As $n$ tends to infinity, and both axes are scaled by $1/\sqrt{n}$, the left trajectory tends to a two dimensional Brownian motion with variance $1-2p = .5$ times as large in the left-right direction as the up-down direction.  The middle and right trajectories tend to one-dimensional Brownian motions, concentrated on a vertical axis.  (See Remark \ref{r.generalization}.)}
\end {center}
\end {figure}

In particular, the above theorem implies that $\cou_n$ scales to ordinary Brownian motion, which is not surprising since it is just a simple random walk on $\mathbb Z$ regardless of $p$.  When $p=0$, the processes $\dis_n$ and $\cou_n$ are independent simple random walks on $\mathbb Z$, and it is also unsurprising that $A_n=(\dis_n, \cou_n)$ scales to an ordinary two-dimensional Brownian motion in this case.  When $p=1$ (and one only removes burgers from the top of the stack, because all orders are flexible), the law of $X(-\infty,n)$ (for any fixed $n$) is that of an i.i.d.\ sequence of $\C$ and $\H$ values.  In this case, the fact that inventory changes tend to be ``well balanced'' between hamburgers and cheeseburgers (so that the first term in \eqref{e.brownianlimit} is identically zero and $\epsilon A_{t/\epsilon^2}$ concentrates on the vertical axis as $\epsilon \to 0$) can be deduced from the law of large numbers.  Indeed, when $p=1$, the magnitude of $\cou_n$ has order $n^{1/2}$ with high probability, while the magnitude of $\dis_n$ has order $n^{1/4} = o(n^{1/2})$.  To see where the $n^{1/4}$ comes from, recall that $\cou_0 = 0$ and condition on $m := \min\{ \cou_j : 0 \leq j \leq n \}$ and on $\cou_n$, noting that $\cou_0 - m$ and $\cou_n - m$ are both of order $\sqrt{n}$.  Note that the top $\cou_0 - m$ burgers in the time zero stack fill the orders remaining in the reduced word $X(1,n)$, and the top $\cou_n - m$ burgers in the time $n$ stack are the types of the burgers in $X(1,n)$.
The types of the top $\cou_0 - m$ burgers in the time zero stack and the top $\cou_n - m$ burgers in the time $n$ stack can then be determined with independent coin tosses, so that $\dis_n$ is of order $\sqrt{\sqrt{n}} = n^{1/4} = o(n^{1/2})$.

The theorem states that as long as $p \geq 1/2$ this balance continues to hold: when $n$ is large, the net inventory accumulation between time zero and $n$ is close to evenly divided between hamburgers and cheeseburgers, with high probability.  When $p < 1/2$ the fluctuations of $\dis_n$ are on the same order as those of $\cou_n$.  Put differently, a LIFO retailer accumulates major inventory discrepancies (on the same order as the total inventory fluctuation) if and only if more than half of its customers have a product preference.  Figure \ref{burgerplots} illustrates sample trajectories of $A_n$ for both $p<1/2$ and $p>1/2$.

\section{Inventory trajectories: constructions and proofs}\label{s.inventoryproofs}

The primary goal of this section is to prove Theorem \ref{t.brownianlimit}.  On the way, we will establish some independently interesting lemmas involving properties of ``excursion words'' of various types, equivalent formulations of the $p=1/2$ phase transition, typical lengths of random reduced words, and monotonicity properties for the time evolution of an inventory stack.

We recall that the {\bf optional stopping theorem} states that if $X_0, X_1, X_2, \ldots$ is a martingale (resp., supermartingale) and $K$ is a bounded stopping time then $\mathbb E[X_K] = \mathbb E[X_0]$ (resp., $\mathbb E[X_K] \leq \mathbb E[X_0]$).  Furthermore, if $K$ is an arbitrary (possibly unbounded) stopping time and $X_1, X_2, \ldots$ is a martingale (or supermartingale) {\em whose values are bounded below} then $\mathbb E[X_K] \leq \mathbb E[X_0]$. By symmetry, this also implies that if $K$ is an arbitrary stopping time and $X_1, X_2, \ldots$ is a martingale (or submartingale) whose values are bounded above then $\mathbb E[X_K] \geq \mathbb E[X_0]$. We will use these facts frequently.


\subsection{A variance calculation} \label{ss.variance}

Let $J$ be the smallest positive integer for which the (reduced) word $X(-J,-1)$ has a non-empty burger stack (i.e., at least one $\C$ or $\H$ symbol) and let $\chi = \chi(p)$ be the expected length of $X(-J,-1)$.  As preparation for proving Theorem \ref{t.brownianlimit}, we will prove the following:
\begin{lemma} \label{l.pchivariance}
If $p \leq 1/2$ then $\chi=2$ and $\Var[\dis_n] = (1-2p) n + o(n)$, while if $p \geq 1/2$ then $\chi=1/p$ and $\Var[\dis_n] = o(n)$, as summarized in the table below:
\begin{center}
\begin{tabular} {|l|l|l|}
\hline
$p \leq 1/2$ & $\chi = 2$ & $\lim_{n \to \infty} n^{-1} \Var[\dis_n] = 1-2p$ \\
\hline
$p \geq 1/2$ & $\chi = 1/p$ & $\lim_{n \to \infty} n^{-1} \Var[\dis_n] = 0$ \\
\hline
\end{tabular}
\end{center}
\end{lemma}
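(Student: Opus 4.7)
The plan is to set up the reverse-direction Markov chain on order-stacks from Section~\ref{ss.semigroup}, compute $\chi$ as its expected payoff at absorption, and then deduce the variance formula through a Doob decomposition of $\dis_n$.

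\textbf{Set-up.} When prepending $X(-1),\ldots,X(-k)$ for $k<J$, the reduced word $X(-k,-1)$ consists solely of an order stack $O_k$ (with empty burger stack). The five prepending transitions are: with probabilities $(1-p)/4,(1-p)/4,p/2$ the letters $\ch,\h,\x$ attach to the left of $O_k$; with probability $1/4$ each, a burger $\C$ or $\H$ commutes past the leading opposite-type orders in $O_k$ and cancels the first $\ch/\x$ (respectively $\h/\x$), or, failing to find one, terminates the chain. Because a failed burger commutes past all leading orders of the opposite type to the right without removing any, one has $L := |X(-J,-1)| = 1 + |O_{J-1}|$, so $\chi = 1 + \E[|O_{J-1}|]$.

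\textbf{Computing $\chi$.} One verifies by case analysis (on the five possible prepends) that $f(O) := |O|+2$ satisfies the Bellman equation
\[f(O) = p_{\mathrm{stop}}(O)\,(1 + |O|) + \sum_{O'} p(O,O')\,f(O'),\]
with $p_{\mathrm{stop}}(O) = \tfrac14 \mathbf 1[\text{no }\ch/\x\text{ in }O] + \tfrac14 \mathbf 1[\text{no }\h/\x\text{ in }O]$: each prepend-order increases $|O|$ by one and each successful prepend-burger decreases it by one, so the interior terms balance and $p_{\mathrm{stop}}\cdot(|O|+1)$ contributes exactly the missing boundary term. This candidate gives $\chi = 2$. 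For $p \leq 1/2$, I would argue that the chain on the counts $(a,b,c)$ of $\ch,\h,\x$ is transient enough that $f(O_k)$ is a uniformly integrable martingale through stopping, so optional stopping identifies $\chi = f(\emptyset) = 2$. For $p \geq 1/2$, the $\x$'s accumulate at the leading positions of $O_k$ in equilibrium, and a coupling with the pure-$\x$ chain (in which $|O|$ performs simple random walk on $\mathbb Z_{\geq 0}$ and is absorbed at $0$ with probability $\tfrac12$ per visit) reduces $\chi$ to a geometric-type calculation giving $\chi = 1/p$. Both formulas agree at the critical value $p = 1/2$.

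\textbf{Variance.} A direct case-by-case inspection of the five values of $X(k)$ yields the key identity
\[\E[\dis(Y(k)) \mid \mathcal F_{k-1}] = -\tfrac p2\,\dis(\text{top of }S_{k-1}),\]
since the four non-$\x$ contributions cancel in expectation while the $\x$-case contributes $-\dis(\text{top of }S_{k-1})$ with weight $p/2$. This yields the Doob decomposition $\dis_n = M_n - \tfrac p2 T_n$, where $T_n := \sum_{k=1}^n \dis(\text{top of }S_{k-1})$ and $M_n$ is an $L^2$ martingale whose conditional increment variance is constantly $1 - p^2/4$ (computed using $E[\dis(Y(k))\dis(\text{top}_{k-1})\mid\mathcal F_{k-1}] = -p/2$). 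The asymptotic per-step variance $n^{-1}\Var[T_n]$ is controlled by autocorrelations of the stationary ergodic top-burger process, which reduce to $\sum_{d\ge 0}\P(\text{top}_0 = \text{top}_d)$; this renewal quantity is controlled by the Markov-chain analysis of the previous step (the top-change rate is $(3+p)/4$ and the expected top-lifetime is governed by $\chi$). Combining $\Var[M_n]$, $(p/2)^2\Var[T_n]$, and the cross-covariance $p\,\Cov[M_n,T_n]$ gives $n^{-1}\Var[\dis_n] \to 1 - 2p$ for $p \leq 1/2$ and $\to 0$ for $p \geq 1/2$.

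The main obstacle I anticipate is the uniqueness step for $\chi$: the Bellman equation admits multiple non-negative solutions (notably $f\equiv 1$ at $p=1$), and a careful transience/recurrence dichotomy for the chain on $(a,b,c)$ is needed to select the correct value of $\E[L]$ in each regime. Once $\chi$ is known, the variance computation is a matter of bookkeeping combining the martingale, ergodic, and renewal identities above.
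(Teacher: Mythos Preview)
Your proposal correctly identifies that $|O_k|+2$ (equivalently $-\cou(-k,-1)+2$) is a martingale up to absorption, which is exactly the paper's observations (1)--(3). But this only yields $\chi\le 2$; the hard part is pinning down which value of $\chi$ actually occurs for a given $p$, and your proposed mechanisms for doing so do not work as stated. The ``uniform integrability for $p\le 1/2$'' claim is precisely the difficult direction: there is no a priori reason why the order-stack chain should be better behaved below $p=1/2$ than above, and no transience/recurrence statement on the $(a,b,c)$ chain will by itself give UI of $|O_k|$ through the stopping time (indeed $\E[J]=\infty$ for all $p$, since $\cou$ is a simple walk). Your $p\ge 1/2$ coupling sketch is likewise not a proof: the claim that ``$\x$'s accumulate at the leading positions in equilibrium'' presupposes knowledge of the stationary behavior of the order stack, which is essentially what is at issue.

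The paper avoids computing $\chi$ directly and instead runs the logic in the \emph{opposite} direction from yours. It first derives the covariance identity $\E[\dis(0)\dis(-J,-1)]=-\chi p/2$ and the resulting variance formula $\Var[\dis_n]=(1-\chi p)n+o(n)$, valid \emph{conditionally} on the tail estimate $\E[|\dis(-n,-1)|1_{J>n}]\to 0$. From $\chi=2$ this tail estimate is automatic (your martingale observation), and then nonnegativity of the variance forces $p\le 1/2$. The nontrivial converse---that $\chi\neq 2$ forces both $\Var[\dis_n]=o(n)$ and the tail estimate, hence $\chi=1/p$ and $p\ge 1/2$---is the content of Lemma~\ref{l.chigives}, whose proof occupies Sections~\ref{ss.excursionwords}--\ref{ss.phase} and relies on two substantial ingredients you have no analog for: (i) when $\chi<2$, the excursion words $E_i$ between record minima of $\cou$ have finite expected length, which via a drift argument forces the stack $X(-\infty,0)$ to be asymptotically balanced; and (ii) a monotone coupling of embedded stacks (Lemma~\ref{l.monotonicity}) giving uniform exponential tails on $|\dis_j|/\sqrt n$. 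Your Doob decomposition $\dis_n=M_n-\tfrac p2 T_n$ is correct and the increment variance $1-p^2/4$ is right, but relating $n^{-1}\Var[T_n]$ to $\chi$ still requires exactly the kind of stack-balance input that (i) and (ii) provide; the reduction to ``$\sum_d \P(\text{top}_0=\text{top}_d)$'' is not valid without a symmetry argument showing the autocovariance vanishes when the top changes, and even then summability of that series is equivalent to a finite-expected-return statement you have not established.
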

\noindent{This section will prove Lemma \ref{l.pchivariance} modulo one other lemma (Lemma \ref{l.chigives}) to be established later.  The rough idea behind the argument is pretty simple. On the event that $X(0) = \x$, it is not hard to see that $\phi(0) = -J$, as defined above. Our intuition suggests that the addition of extra $\x$ symbols to the model should decrease the fluctuation of inventory imbalances.  Basically, this is because on the event that a fresh order at time $0$ consumes a cheeseburger (i.e., $X(0) = 0$ and $X(-J) = \C$), one might guess that, on average, there had been some net surplus of $\C$ during the period immediately before time $0$. In particular, one would expect the word $X(-J,-1)$ to contain a net surplus on $\C$ symbols. Quantifying this effect precisely will amount to computing the expectation of $\dis(-J,-1)$ on the event $\phi(-J) = \C$, which will turn out (see \ref{i::orderlength} in the list below) to be equivalent to computing $\chi = \mathbb E[|X(-J,-1)|]$. The arguments below will follow this idea and will give the reader a quick idea of
where the $\alpha = \max \{1-2p,0 \}$ of Theorem \ref{t.brownianlimit} comes from and why a phase transition occurs at $p=1/2$.  We present them as a series of observations:}
\begin{enumerate}
 \item \label{i::orderlength} The order stack of $X(-J,-1)$ must consist of zero or more orders of type opposite to the one burger in $X(-J,-1)$, e.g., $X(-J,-1) = \ch\ch\ch \H$ or $\h\h\C$ or $\C$.  In particular, the length of the word $X(-J,-1)$ is given by \begin{equation}\label{e.diswordlength} |X(-J,-1)| = |\dis(-J,-1)| = -\cou(-J,-1)+2 . \end{equation}
 \item $\cou(-j,-1)$ is a martingale in $j$ satisfying $\cou(-j,-1) \leq 1$ for all $j \in \{1,2,\ldots,J \}$ and $\mathbb E[\cou(-1,-1)] = 0$. By the optional stopping theorem for martingales bounded above, the expectation of a martingale at a (possibly unbounded) stopping time is at least the expectation at time zero (and at most the upper bound). This gives $\mathbb E[\cou(-J,-1)] \in [0,1]$, and hence by \eqref{e.diswordlength}, \begin{equation} \label{e.chidef} \chi := \mathbb E \bigl[|(X(-J,-1)|\bigr] = \mathbb E\bigl[|\dis(-J,-1)|\bigr] \in [1,2]. \end{equation}
\item By \eqref{e.diswordlength} and \eqref{e.chidef}, we have $\mathbb E[\cou(-J,-1)] = \mathbb E[\cou(-1,-1)]=0$ if and only if $\chi=2$.  Since the optional stopping theorem implies
    $$\mathbb E[\cou(-1,-1)] = \mathbb E[\cou(-J,-1) 1_{J \leq n}] + \mathbb E[\cou(-n,-1) 1_{J>n}],$$
    and the former term on the RHS tends to $\mathbb E[\cou(-J,-1)]$ we have
    \begin{equation} \label{e.chi2} \chi = 2 \,\,\,\,\text{if and only if} \,\,\,\,  \lim_{n \to \infty} \mathbb E[\cou(-n,-1) 1_{J > n}] = 0. \end{equation}
 \item By \eqref{e.chidef}, the expectation $\mathbb E[\dis(-J,-1)]$ exists and, by symmetry,
     \begin{equation} \label{e.zeroexpdis} \mathbb E[\dis(-J,-1)] = 0. \end{equation}
 By \eqref{e.chi2} and the fact that $|\dis(-n,-1)| \leq |\cou(-n,-1)| = -\cou(-n,-1)$ if $n<J$, \begin{equation} \label{e.disJ} \chi = 2 \,\,\,\, \text{implies} \,\,\,\,
 \lim_{n \to \infty} \mathbb E\bigl[|\dis(-n,-1)| 1_{J > n}\bigr] = 0. \end{equation}
 \item On the event $X(0) \not = \x$ the value $\dis(0)$ is determined by $X(0)$ independently of $\dis(-J,-1)$.  Hence, by \eqref{e.zeroexpdis}, the expectation of $\dis(0)\dis(-J,-1)$ restricted to this event is zero.  On the event $X(0) = \x$ we have $J = \phi(0)$ and hence $\dis(0)$ has sign opposite that of $\dis(-J,-1)$.  Since $X(0) = \x$ with probability $p/2$ and (even after conditioning on $X(0) = \x$) we have $\mathbb E\bigl[|\dis(-J,-1)|\bigr] = \chi$, this implies \begin{equation} \label{e.disexp} \mathbb E[\dis(0) \dis(-J,-1)] = -\frac{\chi p}{2}.\end{equation}
\item On the event that $J < n$ the expectation of $\dis(0) \dis(-n,-J-1)$ is zero, since $\dis(-n,-J-1)$ is independent of $\dis(0)$ on this event (because $X(-J,0)$ contains no $\x$ symbols, and one can swap the roles of hamburgers and cheeseburgers in the word $X(-J)X(-J+1)\ldots X(0)$ independently of the values of $X_j$ for $j < -J$).  Thus the expectations of $\dis(0) \dis(-n,-1)$ and
    $\dis(0) \dis(-J,-1)$ are the same on the event $J<n$ (and trivially also the same on the event $J=n$).  Thus
\begin{equation} \label{e.disbound} \mathbb E[\dis(0) \dis(-n,-1)] = \mathbb E[\dis(0) \dis(-J,-1) 1_{J  \leq n }] + \mathbb E[\dis(0) \dis(-n,-1) 1_{J > n }].
\end{equation} By \eqref{e.disJ}, the latter term in \eqref{e.disbound} tends to zero as $n \to \infty$ when $\chi = 2$.  Thus,
\begin{equation} \label{e.varwithpast} \chi = 2 \,\,\,\, \text{implies} \,\,\,\, \lim_{n \to \infty} \mathbb E[ \dis(0) \dis(-n,-1)] = -\frac{\chi p}{2}  = -p.\end{equation}
Using translation invariance of the law of $Y_j$ and \eqref{e.varwithpast}, we obtain that
\begin{equation} \label{e.varlimit}\chi = 2 \,\, \text{implies} \,\,\Var[\dis_n] = \sum_{i=1}^n
 \mathbb E [\dis(Y_i)^2] + 2 \sum_{i=2}^n \mathbb E[\dis(i) \dis(1,i-1) ] =
 n  -2 \frac{\chi p}{2} n + o(n),\end{equation} which in particular implies that $1-\chi p \geq 0$ so that \begin{equation} \label{e.p12} \chi = 2 \,\,\,\, \text{implies both} \,\,\,\, p \leq 1/2 \,\,\,\, \text{and} \,\,\,\, \Var[\dis_n] = (1-2p)n + o(n). \end{equation}
\item  Lemma \ref{l.chigives} below states that the conclusion of \eqref{e.disJ} holds even if $\chi \not = 2$.  For now, we note that that the above calculations show that the conclusions of \eqref{e.varwithpast} and hence \eqref{e.varlimit} remain true when $\chi \not = 2$ {\em contingent} on this claim:
\begin{equation} \label{e.varwithpast2}
\lim_{n \to \infty} \mathbb E\bigl[ |\dis(-n,-1)| 1_{J > n }\bigr]=0  \,\,\,\, \text{implies} \,\,\,\,\Var[\dis_n] = n  - \chi p n + o(n).\end{equation}
From this and \eqref{e.chidef} we see the following: \begin{equation*} \Var[\dis_n] = o(n) \,\,\, \text{and} \,\,\,\lim_{n \to \infty} \mathbb E[| \dis(-n,-1)| 1_{J > n }]=0 \,\,\,\, \text{together imply} \end{equation*} \begin{equation}\label{e.chivalue} 1- \chi p = 0 \,\,\,\, \text{and hence} \,\,\,\, \chi = 1/p \,\,\,\, \text{and} \,\,\,\, p \geq 1/2.\end{equation}
 \end{enumerate}

The following lemma will be proved later:

\begin{lemma} \label{l.chigives} If $\chi \not = 2$ then $\Var[\dis_n] = o(n)$ and $\lim_{n \to \infty} \mathbb E[| \dis(-n,-1)| 1_{J > n }]=0$.
\end{lemma}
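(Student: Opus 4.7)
I plan to approach the lemma by first establishing $\lim_{n \to \infty} \mathbb{E}[|\dis(-n,-1)| 1_{J > n}] = 0$ \emph{without} the assumption $\chi \neq 2$. Once this is in hand, the reasoning leading to \eqref{e.varwithpast2} in Section~\ref{ss.variance} yields $\Var[\dis_n] = (1 - \chi p) n + o(n)$, and the remaining task under $\chi \neq 2$ is to argue $\chi p = 1$, giving $\Var[\dis_n] = o(n)$.

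For the limit I apply Cauchy--Schwarz:
$$\mathbb{E}\bigl[|\dis(-n,-1)| 1_{J > n}\bigr]^2 \leq \mathbb{E}\bigl[\dis(-n,-1)^2 1_{J > n}\bigr] \cdot \mathbb{P}(J > n).$$
Proposition~\ref{p.allmatched} gives $\mathbb{P}(J > n) \to 0$, so it suffices to bound the second-moment factor uniformly in $n$. On $\{J > n\}$ the reduced word $X(-n,-1)$ is a pure order word of length $L(n) = -\cou(-n,-1)$. Decompose $\dis(-n,-1) = (a - b) + (u - v)$, where $a, b$ count the direct \ch\ and \h\ orders in $X(-n,-1)$ and $u, v$ count the surviving \x\ orders colored in $Y$ by external \C\ and \H\ burgers respectively. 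The external burgers are determined by $X(k)$ for $k < -n$ and so are independent of $X(-n,-1)$; conditional on $X(-n,-1)$ and on which external burger positions get matched, their types are i.i.d.\ uniform on $\{\C, \H\}$ (because the $X(k)$'s are i.i.d.\ and marginally uniform among burgers). Hence $\mathbb{E}[u - v \mid X(-n,-1)] = 0$ and $\mathbb{E}[(u - v)^2 \mid X(-n,-1)] \leq u + v$. The global \ch/\h/\C/\H\ swap preserves $\{J > n\}$ and negates $a - b$, so $\mathbb{E}[(a - b) 1_{J > n}] = 0$; a careful conditioning on the skeleton of burger-/\x-/$\{\ch,\h\}$-positions should upgrade this to $\mathbb{E}[(a - b)^2 1_{J > n}] \leq \mathbb{E}[(a + b) 1_{J > n}]$. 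Combining (the cross term vanishes because $\mathbb{E}[u - v \mid X(-n,-1)] = 0$),
$$\mathbb{E}[\dis(-n,-1)^2 1_{J > n}] \leq \mathbb{E}[L(n) 1_{J > n}].$$
The right-hand side equals $-\mathbb{E}[\cou(-n,-1) 1_{J > n}]$, which converges to $2 - \chi$ by the optional stopping argument in items~2--3 of Section~\ref{ss.variance}. Hence $\mathbb{E}[\dis(-n,-1)^2 1_{J > n}]$ is bounded uniformly in $n$, and Cauchy--Schwarz delivers the limit.

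To upgrade $\chi p \leq 1$ to equality when $\chi \neq 2$, I would use an independent optional-stopping identity on a ``forward'' stopping time analogous to $J$: the first $k \geq 0$ at which an \x\ order consumes the top burger $X(-J)$ of the infinite stack. Since \x's appear at rate $p/2$ per forward step and each such arrival removes exactly one top burger, balancing the stationary production rate of stack burgers (tied to $\chi$ via the structure $X(-J,-1) = \C \h^R$ or $\H \ch^R$ with $\mathbb{E}[R] = \chi - 1$) against the consumption rate should yield $\chi p = 1$ precisely in the regime where every stack burger is eventually consumed from above---which characterizes $\chi \neq 2$.

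The main obstacle I expect is justifying the bound $\mathbb{E}[(a - b)^2 1_{J > n}] \leq \mathbb{E}[(a + b) 1_{J > n}]$: although the \ch/\h\ symmetry gives $(a - b)$ conditional mean zero on $\{J > n\}$, the event $\{J > n\}$ induces subtle correlations between surviving order types at different positions in the window, so one cannot simply treat the direct orders as independent coin flips. The cleanest route is likely a Markov-chain analysis of the backward exploration of the order stack, leveraging i.i.d.-ness of the inputs $X(k)$ together with invariance of the matching rules under the global swap.
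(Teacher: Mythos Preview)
Your proposal has a genuine gap in the key conditional-independence step, and a second gap in the ``$\chi p = 1$'' part.

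\medskip

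\textbf{The conditional law of \x-matches is not uniform.} You assert that, conditional on the reduced word $X(-n,-1)$ and on which external positions get matched to its \x\ symbols, the burger types at those positions are i.i.d.\ uniform on $\{\C,\H\}$. This is false: which position matches a given \x\ depends on the \emph{type} of the burger there, so conditioning on the match biases the type. Concretely, take $X(-n,-1)=\ch\,\x$ and add the next burger on the left. If it is $\C$, it annihilates the leftmost matching order $\ch$ and the \x\ survives; if it is $\H$, it skips $\ch$ and annihilates the \x. Thus the \x\ is matched to $\H$ with probability $1/2$ at this step, and to a uniform burger only after the $\ch$ is cleared, giving
\[
\mathbb P\bigl(\text{\x\ matched to }\H \;\bigm|\; X(-n,-1)=\ch\,\x\bigr)=\tfrac12+\tfrac12\cdot\tfrac12=\tfrac34,
\]
so $\mathbb E[u-v\mid X(-n,-1)=\ch\,\x]=-\tfrac12\neq 0$. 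This invalidates both your claim $\mathbb E[(u-v)^2\mid X(-n,-1)]\le u+v$ and the vanishing of the cross term $\mathbb E[(a-b)(u-v)1_{J>n}]$ (note that $(a-b)(u-v)$ is \emph{invariant} under the global $\C\!\leftrightarrow\!\H$ swap, so symmetry alone gives nothing). You were worried about the $(a-b)^2$ bound, but the $(u-v)$ part you took for granted is already broken.

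\medskip

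\textbf{The ``$\chi p=1$'' step is not a proof.} Even granting the limit statement and hence $\Var[\dis_n]=(1-\chi p)n+o(n)$, your proposed rate-balance argument for $\chi p=1$ under $\chi\neq 2$ is only a heuristic. You would need a precise identity relating the density of top-of-stack \x-consumptions to $\chi$, and it is not clear how to extract one without first understanding the stack's asymptotic composition.

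\medskip

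\textbf{How the paper proceeds.} The paper's route is essentially the reverse of yours: it first shows $\Var[\dis_n]=o(n)$ from $\chi\neq 2$, then deduces the limit statement. The key input is the excursion-word analysis (Lemmas~\ref{l.expEfinite}--\ref{l.stackfraction}): $\chi<2$ forces $\mathbb E[|E|]<\infty$, which in turn forces the burger stack $X(-\infty,0)$ to be asymptotically balanced (fraction of $\H$'s $\to 1/2$). Combined with the fact that $X(-\infty,0)$ and $X(-\infty,n)$ differ only in their top $O(\sqrt n)$ entries with high probability (Lemma~\ref{l.wordlength}), this gives $n^{-1/2}\dis_n\to 0$ in probability and then in $L^2$ via the uniform tail bound of Lemma~\ref{l.translateprob}. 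The limit statement is then obtained by a change of measure to the walk conditioned to stay positive (Lemma~\ref{l.distightness}). None of this is circumvented by Cauchy--Schwarz: the correlation between \x-matches and the structure of $X(-n,-1)$ on $\{J>n\}$ is exactly what the excursion machinery is built to handle.
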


Lemma \ref{l.chigives} states, in other words, that for each $p \in [0,1]$, {\em either} the left statement in \eqref{e.p12} (that $\chi = 2$) is true {\em or} the left statements in \eqref{e.chivalue} (that $\Var[\dis_n] = o(n)$ and $\lim_{n \to \infty} \mathbb E[| \dis(-n,-1)| 1_{J > n }]=0$) are true.
We now claim that Lemma \ref{l.pchivariance} is a consequence of \eqref{e.disJ}, \eqref{e.p12}, \eqref{e.chivalue}, and Lemma \ref{l.chigives}.
One one hand Lemma \ref{l.chigives} and \eqref{e.chivalue} imply that if $p<1/2$ then $\chi = 2$ and \eqref{e.p12} gives the top row of  Lemma \ref{l.pchivariance}. On the other hand \eqref{e.p12} implies that if $p > 1/2$ then $\chi \not = 2$, and  Lemma \ref{l.chigives} and \eqref{e.chivalue} give the bottom row. If $p = 1/2$, then the top and bottom row are equivalent, and we must have $\chi = 2$ (giving the desired result by \eqref{e.p12}) since $\chi \not = 2$ would lead to a contradiction using Lemma \ref{l.pchivariance} and \eqref{e.chivalue}.

In preparation for proving Lemma \ref{l.chigives}, we will derive several additional consequences of the assumption that $\chi \not = 2$ in Section \ref{ss.excursionwords}.

\subsection{Two simple finite expectation criteria} \label{ss.twocriteria}
This section makes two simple observations that will be useful in Section \ref{ss.excursionwords}.  The first is a special case of what is sometimes called Wald's identity. (We include a short proof for convenience in case the reader has not seen this before.)

\begin{lemma} \label{l.finitelystoppedexpectedsum}
Let $Z_1, Z_2, \ldots$ be i.i.d.\ random variables on some measure space and $\psi$ a measurable function on that space for which $\mathbb E[\psi(Z_1)]$ is well defined and finite.  Let $T$ be a stopping time of the process $Z_1, Z_2, \ldots$ with the property that $\mathbb E[T]$ is finite.  Then $\mathbb E[\sum_{j=1}^T \psi(Z_j)]$ is also well defined and finite.
\end{lemma}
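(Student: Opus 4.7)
\textbf{Proof plan for Lemma \ref{l.finitelystoppedexpectedsum}.} This is a standard Wald-type identity, and the plan is to reduce it to a monotone Fubini argument together with the defining property of a stopping time. I will first reduce to the case $\psi \geq 0$: writing $\psi = \psi^+ - \psi^-$, the hypothesis $\mathbb E[\psi(Z_1)] < \infty$ gives $\mathbb E[\psi^+(Z_1)] < \infty$, and for the purpose of showing $\mathbb E[\sum_{j=1}^T \psi(Z_j)] < \infty$ it suffices to bound $\mathbb E[\sum_{j=1}^T \psi^+(Z_j)]$ since the negative part only makes the sum smaller. So assume $\psi \geq 0$ henceforth.

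The key step is the rewriting
\begin{equation*}
\sum_{j=1}^{T} \psi(Z_j) \;=\; \sum_{j=1}^{\infty} \psi(Z_j)\, 1_{\{T \geq j\}},
\end{equation*}
which is a pointwise identity of non-negative quantities. Taking expectations and applying Tonelli's theorem,
\begin{equation*}
\mathbb E\Bigl[\sum_{j=1}^{T} \psi(Z_j)\Bigr] \;=\; \sum_{j=1}^{\infty} \mathbb E\bigl[\psi(Z_j)\, 1_{\{T \geq j\}}\bigr].
\end{equation*}

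The heart of the argument is the observation that $\{T \geq j\} = \{T \leq j-1\}^{c}$, and because $T$ is a stopping time of the sequence $Z_1, Z_2, \ldots$, the event $\{T \leq j-1\}$ is measurable with respect to $\sigma(Z_1, \ldots, Z_{j-1})$. Hence $1_{\{T \geq j\}}$ is independent of $Z_j$, giving
\begin{equation*}
\mathbb E\bigl[\psi(Z_j)\, 1_{\{T \geq j\}}\bigr] \;=\; \mathbb E[\psi(Z_j)]\, \mathbb P[T \geq j] \;=\; \mathbb E[\psi(Z_1)]\, \mathbb P[T \geq j],
\end{equation*}
where the last equality uses that the $Z_j$ are identically distributed. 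Summing in $j$ and using the standard identity $\sum_{j=1}^{\infty} \mathbb P[T \geq j] = \mathbb E[T]$, I obtain
\begin{equation*}
\mathbb E\Bigl[\sum_{j=1}^{T} \psi(Z_j)\Bigr] \;=\; \mathbb E[\psi(Z_1)]\, \mathbb E[T],
\end{equation*}
which is finite by the two hypotheses.

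There is no real obstacle here; everything rests on the stopping-time property to decouple $Z_j$ from $\{T \geq j\}$, and on non-negativity (after the reduction) to justify Tonelli. The only mild subtlety is the initial reduction to $\psi \geq 0$, which works cleanly because the lemma's conclusion is a one-sided bound rather than an integrability statement for the full sum.
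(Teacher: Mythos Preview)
Your proof is correct and follows essentially the same approach as the paper: reduce to non-negative $\psi$, rewrite the random sum as $\sum_{j\ge 1}\psi(Z_j)1_{\{T\ge j\}}$, use the stopping-time property to factor each term as $\mathbb E[\psi(Z_1)]\mathbb P[T\ge j]$, and sum to get $\mathbb E[\psi(Z_1)]\,\mathbb E[T]$. The only differences are cosmetic---you are a bit more explicit about invoking Tonelli and about why $\{T\ge j\}$ is independent of $Z_j$.
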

\begin{proof}
It is enough to consider the case that $\psi$ is non-negative (since we can write a general $\psi$ as a difference of non-negative functions).
Since $T$ is a stopping time, we know that for each fixed $j$, the value of $Z_j$ is independent of the event that $T \geq j$. Thus
\begin{equation} \label{eqn::stoppingtotal} \mathbb E[\sum_{j=1}^T \psi(Z_j)]  = \sum_{j=1}^\infty \mathbb E[\psi(Z_j) 1_{T \geq j}] = \sum_{j=1}^\infty \mathbb E[\psi(Z_j)] \mathbb E[1_{T \geq j}] = \mathbb E[\psi(Z_1)] \mathbb E[T].\end{equation}
\end{proof}

\begin{lemma} \label{l.finiteexpectationtozero}
Let $Z_1, Z_2, \ldots$ be i.i.d.\ random variables on some measure space and let $\mathcal Z_n$ be a non-negative-integer-valued process adapted to the filtration of the $Z_n$ (i.e., each $\mathcal Z_n$ is a function of $Z_1, Z_2, \ldots, Z_n$) that has the following properties:
\begin{enumerate}
\item {\bf Bounded initial expectation:}  $\mathbb E[\mathcal Z_1] < \infty$.
\item {\bf Positive chance to hit zero when close to zero:} For each $k > 0$ there exists a positive $p_k$ such that $\mathbb P[\mathcal Z_{n+1} = 0 | Z_1, \ldots, Z_k] \geq p_k$ on $\{\mathcal Z_n = k \}$.
\item {\bf Uniformly negative drift when far from zero:} There exist positive constants $C$ and $c$ such that $\mathbb E[\mathcal Z_{n+1} - \mathcal Z_n | Z_1, \ldots, Z_k] \leq -c$ on $\{\mathcal Z_n \geq C \}$.
\item {\bf Bounded expectation when near zero:} There further exists a constant $b$ such that $\mathbb E[\mathcal Z_{n+1} | Z_1, \ldots, Z_n] < b$ on $\{\mathcal Z_n < C \}$.
\end{enumerate}
Then $$\mathbb E \bigl[ \min \{n : \mathcal Z_n = 0 \} \bigr] < \infty.$$
\end{lemma}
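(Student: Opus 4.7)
\medbreak
\noindent\textbf{Proof proposal.} The plan is to build a non-negative Lyapunov function $g \colon \{0,1,2,\ldots\} \to [0,\infty)$ with $g(0) = 0$ such that
$$\mathbb{E}\bigl[g(\mathcal{Z}_{n+1}) - g(\mathcal{Z}_n) \bigm| \mathcal{F}_n\bigr] \leq -1$$
on the event $\{\mathcal{Z}_n > 0\}$, where $\mathcal{F}_n$ is the $\sigma$-algebra generated by $Z_1,\ldots,Z_n$.  Granted such a $g$, let $T = \min\{n : \mathcal{Z}_n = 0\}$ and set $M_n := g(\mathcal{Z}_{n \wedge T}) + (n \wedge T)$.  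On $\{n < T\}$ the $-1$ drift of $g(\mathcal{Z}_n)$ cancels the $+1$ increment of the added clock, while on $\{n \geq T\}$ the process is constant; hence $M_n$ is a non-negative supermartingale, and a routine optional stopping plus monotone convergence argument gives $\mathbb{E}[T] \leq \mathbb{E}[M_1] < \infty$.

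The natural choice is $g(0) = 0$ and $g(k) = k/c + A$ for $k \geq 1$, where $A > 0$ is a constant to be pinned down.  In the ``far'' region $\{\mathcal{Z}_n \geq C\}$, condition~3 gives $\mathbb{E}[\mathcal{Z}_{n+1}/c \mid \mathcal{F}_n] \leq \mathcal{Z}_n/c - 1$; and since the $A$-offset can only decrease (by $A$, on the event $\mathcal{Z}_{n+1} = 0$) in one step, the drift is bounded by $-1$.  In the ``near'' region $\{1 \leq \mathcal{Z}_n \leq C-1\}$, let $p_* = \min_{1 \leq k \leq C-1} p_k$, which is positive by condition~2.  Then $\mathbb{P}[\mathcal{Z}_{n+1} = 0 \mid \mathcal{F}_n] \geq p_*$, so the $A$-term contributes at most $A(1 - p_*)$ to $\mathbb{E}[g(\mathcal{Z}_{n+1}) \mid \mathcal{F}_n]$; combined with the bound $\mathbb{E}[\mathcal{Z}_{n+1} \mid \mathcal{F}_n] < b$ from condition~4, the one-step drift is at most $b/c - \mathcal{Z}_n/c - Ap_* \leq b/c - Ap_*$, which is $\leq -1$ as soon as $A \geq (b+c)/(cp_*)$.

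With such $A$ fixed, condition~1 together with $g(\mathcal{Z}_1) \leq \mathcal{Z}_1/c + A$ yields $\mathbb{E}[M_1] \leq \mathbb{E}[\mathcal{Z}_1]/c + A + 1 < \infty$, so optional stopping gives $\mathbb{E}[n \wedge T] \leq \mathbb{E}[M_n] \leq \mathbb{E}[M_1]$ for every finite $n$, and monotone convergence then produces $\mathbb{E}[T] < \infty$.  The only mildly delicate step is the drift bookkeeping in the near region, where the role of condition~2 is to absorb the large offset $A$ through the uniform hitting probability $p_*$ and the role of condition~4 is to keep the first moment of the next step bounded by $b$; once the correct scaling of $A$ is identified, the remainder is a standard Foster--Lyapunov argument.
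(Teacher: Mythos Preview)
Your argument is correct and is a clean instance of the Foster--Lyapunov criterion: you build a single function $g$ whose one-step drift is uniformly $\le -1$ off the absorbing state, so that $g(\mathcal Z_{n\wedge T})+(n\wedge T)$ is a nonnegative supermartingale and $\mathbb E[T]$ drops out immediately.  The bookkeeping in both regions checks out; in particular, the inequality $g(\mathcal Z_{n+1})\le \mathcal Z_{n+1}/c+A$ (with equality unless $\mathcal Z_{n+1}=0$) is exactly what makes the far-region bound go through without any assumption on $\mathbb P[\mathcal Z_{n+1}=0]$ there.  One cosmetic point: writing $p_*=\min_{1\le k\le C-1}p_k$ tacitly assumes $C$ is an integer; you really mean the minimum over the finitely many integers $k$ with $1\le k<C$.

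The paper's proof is more hands-on and proceeds by a cycle decomposition rather than a Lyapunov function.  It first uses the supermartingale $\mathcal Z_n+cn$ (valid only while $\mathcal Z_n\ge C$) together with optional stopping to bound the expected time to first drop below $C$.  It then observes that each visit to $\{1,\ldots,\lceil C\rceil-1\}$ gives an independent chance at least $\delta=\min_{k<C}p_k$ of hitting $0$, so the number of such visits before absorption is stochastically dominated by a geometric variable.  Finally, condition~4 combined with the same supermartingale bound shows that each excursion back above $C$ has expected length at most $b/c$, and a Wald-type identity (their Lemma~\ref{l.finitelystoppedexpectedsum}) stitches the pieces together.  Your approach packages all three of these steps into a single supermartingale by absorbing the geometric cost of the near-zero attempts into the offset $A$; this is slicker and gives an explicit bound $\mathbb E[T]\le \mathbb E[\mathcal Z_1]/c+A+1$ in one line, whereas the paper's decomposition makes the roles of the four hypotheses more visibly separate.
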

\begin{proof}
The uniformly negative drift assumption implies that the quantity $Y_n = \mathcal Z_n + cn$ is a supermartingale until time $K = \min \{n\geq 0 : \mathcal Z_n < C \}$. In particular this fact together with the optional stopping theorem implies that as a function of $k$ the quantity $\mathbb E[Y_{ K \wedge k}]$ is bounded above independently of $k$.  On the event $K \geq k$ we have $Y_k \geq c k$.  Since $Y_{ K \wedge k}$ is non-negative, this implies that $ ck \mathbb P[ K \geq k] \leq \mathbb E[Y_{ K \wedge k}]$. In particular, this implies that $\lim_{k \to \infty} \mathbb P[ K \geq k] = 0$ so that $K$ is a.s.\ finite.

Next, by the optional stopping theorem for non-negative supermartingales, $\mathbb E[Y_K] \leq \mathbb E[Y_1]$ which implies $\mathbb E[c K] \leq \mathbb E[Y_1]$ and $\mathbb E[K] \leq \mathbb E[Y_1]/c$.  Now let $T_1, T_2, \ldots$ denote the successive times when $\mathcal Z_n < C$. Then we claim that a similar analysis implies that $\mathbb E[T_{k+1} - T_k | Z_1, Z_2, \ldots, Z_{T_k}] \leq \beta$ for some positive constant $\beta$. To carry out this analysis, we will write $\tilde Y_n = \mathcal Z_{T_k+n} + cn$, and note that the same argument as above (using probability conditioned on $Z_1, Z_2, \ldots, Z_{T_k}$ and noting that $\mathbb E[\tilde Y_1 | Z_1, Z_2, \ldots, Z_{T_k}] \leq b+c$) shows that $$\mathbb E[T_{k+1}-T_k| Z_1, Z_2, \ldots, Z_{T_k}] \leq \mathbb E[\tilde Y_1 | Z_1, Z_2, \ldots, Z_{T_k}]/c \leq (C+c)/c =: \beta.$$  Next, recall that whenever $\mathcal Z_n < C$, there is a probability of at least $\delta = \min \{p_k : k \leq C \} > 0$ that $\mathcal Z_{n+1} = 0$. This implies that if $S := \min \{n : \mathcal Z_n = 0 \}$ then $\mathbb P[S > T_k] \leq \delta^{k-1}$.  Since \begin{eqnarray} \mathbb E[S] & \leq & \mathbb \mathbb E[T_1] + E[\sum_{k=1}^{\infty}  (T_{k+1} - T_k) 1_{S > T_k}] \\
& \leq & \mathbb E[K] + \sum_{k=1}^\infty \mathbb P[T_k > S] \mathbb E[T_{k+1} - T_k | S > T_k] \\
& \leq & \mathbb E[K] + \sum_{k=1}^\infty \delta^{k-1} \beta < \infty.
\end{eqnarray}
\end{proof}

\subsection{Excursion words} \label{ss.excursionwords}

In Section \ref{ss.variance} we considered the random word $X(-J,-1)$, where $J$ was the smallest integer for which this word had at least one $\C$ or $\H$ symbol.  We found that the expected word length $\mathbb E[|X(-J,-1)|]$ was a constant $\chi \in [1,2]$. In this section we will consider different words and show that they all have finite expected length provided that $\chi < 2$.

Suppose that $K$ is the smallest $k \geq 0$ for which $\cou_{k+1} = \cou(1,k+1) < 0$.  If $X(1) \in \{\ch, \h, \x \}$ then $K = 0$.  Otherwise, $K$ is a positive value for which $\cou_K = \cou(1,K) = 0$, so that $X(1,K)$ is ``balanced'' in the sense that it has the same number of burgers as orders.  Call $E = X(1,K)$ the {\bf excursion word} beginning at time zero (writing $E = \emptyset$ if $K = 0$).  We make a few observations about $E$:
\begin{enumerate}
\item $E$ a.s.\ contains no $\x$ symbols.
   (Indeed, one can check inductively that $X(1,k)$ contains no $\x$ symbols for any $1 \leq k \leq K$.)
\item If $p = 1$ then $E = \emptyset$ almost surely.
\item The law of $K$ is independent of $p \in [0,1]$.  Its law is that of the number of steps taken by a simple random walk on $\mathbb Z$, started at $0$, before it first hits $-1$.  (In particular, $\mathbb E[K] = \infty$.)
\end{enumerate}

We denote by $V_i$ the symbol corresponding to the $i$th record minimum of $\cou_n$, counting forward from zero, if $i$ is positive, and the $-i$th record minimum of $\cou_n$, counting backward from zero, if $i$ is negative (see Figure \ref{f.excursions}).  We will introduce notation to describe the locations of the $V_i$ in the proof of Lemma \ref{l.expEfinite}. 

Denote by $E_i$ the {\em reduced form of} the word strictly in between the symbols corresponding to $V_i$ and $V_{i-1}$ (if $i$ is positive) or between $V_i$ and $V_{i+1}$ (if $i$ is negative), where $V_0$ is for the moment formally taken to be the ``zero point''  located between the symbols  $X(0)$ and $X(1)$ (see Figure \ref{f.excursions}). We do not define $E_i$ for $i = 0$. Note that the $V_i$ are i.i.d.\ equal to $\H$ or $\C$ (each with probability $.5$) when $i < 0$ and equal to $\h$, $\ch$, or $\x$ (with probabilities $(1-p)/2$, $(1-p)/2$ and $p$) when $i>0$. The $E_i$ are i.i.d.\ excursion words, each with same law as the $E$ described above, and are independent of the $V_i$.  (The fact that $E_1$ and $E_{-1}$ are identically distributed follows from the fact that an excursion of the simple random walk $\cou_n$ is equivalent in law to its time reversal.  Indeed, once we condition on the trajectory of $\cou_n$ over an excursion, we can sample the corresponding symbols by tossing independent coins to replace each upward step with an $\C$ or $\H$ and each downward step with a $\ch$, $\h$, or $\x$.)

\begin {figure}[!ht]
\begin {center}
\includegraphics [height=1.3in]{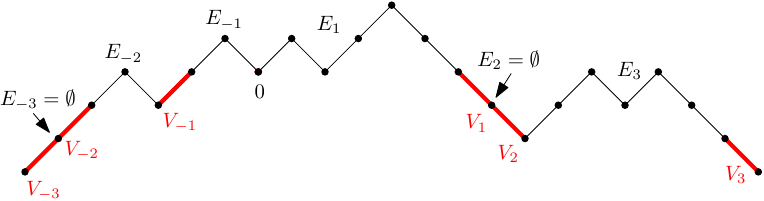}
\caption {\label{f.excursions} Possible graph of $\mathcal C_n$ as a function of $n$.  Up-going edges correspond to burgers, down-going edges to orders.  The $V_i$ correspond to edges that reach record minima when one starts from zero and moves in one (positive or negative) direction.  The $E_i$ correspond to the excursions in between the $V_i$.  (Recall a slight asymmetry in the notation: the first edge to the right of vertex labeled $0$ corresponds to $X(1)$ while the first edge to the left corresponds to $X(0)$.)}
\end {center}
\end {figure}


\begin{lemma} \label{l.expEfinite}
Suppose that $p$ is such that the $\chi$ defined in Section \ref{ss.variance} satisfies $\chi < 2$.  Then the expected word length $\mathbb E[|E|]$ is finite.  Moreover, the expected number of symbols in $E$ of each of the four types in $\{\C,\H,\ch,\h \}$ is $\mathbb E[|E|]/4$.
\end{lemma}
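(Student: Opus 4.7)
The second assertion follows from a symmetry argument. Let $\sigma$ be the involution on $\Theta$ defined by $\sigma(\C) = \H$, $\sigma(\H) = \C$, $\sigma(\ch) = \h$, $\sigma(\h) = \ch$, and $\sigma(\x) = \x$. Since $\sigma$ preserves $\mu$, applying $\sigma$ termwise to $(X(n))$ preserves the joint law, hence the law of $E$, while swapping the counts of $\C$ with $\H$ and of $\ch$ with $\h$. Since $E$ contains no $\x$ (by the first observation of this section) and satisfies $\cou(E) = 0$ (so the total count of $\{\C,\H\}$-symbols in $E$ equals that of $\{\ch,\h\}$-symbols), these identities force all four expected counts to equal $\mathbb E[|E|]/4$.

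For the first assertion, my plan is to apply Lemma \ref{l.finiteexpectationtozero} to a suitable non-negative integer-valued chain $\mathcal Z_n$ encoding the state of the excursion. By Proposition \ref{p.finiteunmatched}, half of $|E|$ equals the number of orders in $[1, K]$ whose match lies in $(-\infty, 0]$ --- equivalently, the number of old burgers from the stack $X(-\infty, 0)$ consumed during the excursion. By the derivation in Section \ref{ss.variance}, the hypothesis $\chi < 2$ is equivalent to $\mathbb E[O_{J-1}] < 1$, giving a strict margin $2 - \chi > 0$. A candidate $\mathcal Z_n$ is obtained by encoding $\cou_n$ together with a statistic of the burger stack of $X(1, n)$ that controls the failure probability (for instance the imbalance between the number of $\C$'s and $\H$'s currently in the burger stack, since a $\ch$-order fails precisely when no $\C$ remains in the forward stack). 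One chooses $\mathcal Z_n$ so that $\mathcal Z_n = 0$ corresponds to the end of the excursion, and uses the bound $\mathbb E[O_{J-1}] < 1$, together with the i.i.d.\ structure and the backward-forward symmetry of the sequence $(X(n))$, to translate $\chi < 2$ into a uniformly negative drift for $\mathcal Z_n$ when far from zero. Lemma \ref{l.finiteexpectationtozero} then bounds the expected hitting time of $0$, whence $\mathbb E[|E|] < \infty$.

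The main obstacle will be the choice of $\mathcal Z_n$ and the verification of condition~(3) of Lemma \ref{l.finiteexpectationtozero} (uniformly negative drift far from zero). This is where the strict inequality $\chi < 2$ is essential: since $\mathbb E[K] = \infty$, one cannot bound $|E|$ by the excursion length, and the drift must genuinely reflect the damping provided by $\chi < 2$, distinguishing this regime from the borderline case $\chi = 2$ (for which $\mathbb E[|E|]$ is expected to be infinite). The remaining conditions (1), (2), and (4) of Lemma \ref{l.finiteexpectationtozero} should follow from standard estimates on the simple random walk $\cou_n$ and the i.i.d.\ structure of the symbols.
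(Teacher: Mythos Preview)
Your treatment of the second assertion is fine and matches the paper's: the $\C\leftrightarrow\H$, $\ch\leftrightarrow\h$ symmetry together with $\cou(E)=0$ and the absence of $\x$ symbols forces all four expected counts to coincide.

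For the first assertion, however, what you have written is a plan rather than a proof. You propose to apply Lemma~\ref{l.finiteexpectationtozero} to some chain $\mathcal Z_n$ but never specify it, and you explicitly flag the choice of $\mathcal Z_n$ and the verification of the negative-drift condition as ``the main obstacle''. That obstacle is the entire content of the lemma. A forward-time approach along the lines you suggest is awkward: the state of the process during the excursion is the full burger stack of $X(1,k)$, not a single integer, and it is not clear how to compress this into a scalar $\mathcal Z_n$ for which $\chi<2$ yields uniform negative drift. (Your notation ``$\mathbb E[O_{J-1}]<1$'' is also undefined; presumably you mean that the expected number of orders in $X(-J,-1)$ is $\chi-1<1$, but this does not by itself translate into a drift statement for any forward chain you have written down.)

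The paper's argument avoids this difficulty by working \emph{backward} and using optional stopping rather than Lemma~\ref{l.finiteexpectationtozero}. One observes that $E_{-1}$ has the same law as $E$, so it suffices to bound the expected number of burgers in $E_{-1}$. Iterating the definition of $J$, set $J_0=-1$ and let $J_k$ be the smallest $j>J_{k-1}$ for which $X(-j,-J_{k-1}-1)$ contains a burger. The words $X(-J_k,-J_{k-1}-1)$ are i.i.d.\ with the law of $X(-J,-1)$, and by \eqref{e.diswordlength} each has $\mathbb E[\cou(\cdot)]=2-\chi>0$. Hence $\sum_{k=1}^m \cou(-J_k,-J_{k-1}-1) - (2-\chi)m$ is a martingale whose increments are bounded above by $1$; if $M$ is the first $m$ at which the partial sum reaches $1$, optional stopping gives $\mathbb E[M]\le 1/(2-\chi)$. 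Since $\cou(-J_M,0)=1$, the time $-J_M$ lies to the left of the location of $V_{-1}$, so $X(-J_M,0)$ contains at least as many burgers as $\overline{V_{-1}E_{-1}}$; but $X(-J_M,0)$ has at most $M$ burgers, whence the expected number of burgers in $E_{-1}$ is finite.

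The point is that the backward $J_k$-decomposition already hands you an i.i.d.\ sequence with strictly positive mean increment, so a one-line optional stopping argument replaces the search for a chain satisfying all four hypotheses of Lemma~\ref{l.finiteexpectationtozero}.
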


\begin{proof}
Since $E$ has the same number of symbols in $\{\C, \H \}$ as in $\{\ch, \h \}$ (recall that $E$ has no $\x$ symbols), the second statement is immediate from the first by symmetry.  To prove that $\mathbb E[|E|] < \infty$ it suffices to show that the expected number of burgers in the word $E_{-1}$ is finite.

To this end, let $J_1$ be the smallest non-negative integer $j$ for which $X(-j,0)$ has a $\C$ or $\H$ symbol.  This is the same as the $J$ defined in Section \ref{ss.variance} except that we count from $0$ instead of from $-1$.  (The law of the word $X(-J_1,0)$ is the same as the law of the word $X(-J,-1)$ in Section \ref{ss.variance}.  We count from $0$ here because $X(0)$ is the rightmost symbol involved in the definition of $E_{-1}$ and $V_{-1}$, see Figure \ref{f.excursions}.)  Then let $J_2$ be the smallest value greater than $J_1$ for which $X(-J_2, -J_1-1)$ has a $\C$ or $\H$ symbol, and so forth.  The words $X(-J_{k+1}, -J_k-1)$ are i.i.d.\ and all have the same laws as $X(-J,-1)$ (and each a.s.\ has length at least one).  In particular, if $\chi < 2$, then we have by \eqref{e.diswordlength} that $\mathbb E[\cou(-J_{k+1}, -J_k-1)] = 2-\chi > 0$ for each $k$.

This implies that (if we write $J_0=-1$) the sum
\begin{equation} \label{eqn:thismartingale} \sum_{k=1}^m \cou(-J_k, -J_{k-1}-1) - (2-\chi)m \end{equation} is a martingale indexed by $m$. Note that the sum $\sum_{k=1}^m \cou(-J_k, -J_{k-1}-1)$ can increase by at most $1$ at each increment. Let $M$ be the smallest $m$ for which $\sum_{k=1}^m \cou( -J_k, -J_{k-1}-1) = 1$. The fact that $\sum_{k=1}^m \cou( -J_k, -J_{k-1}-1) = 1$ is a sum of i.i.d.\ integer-valued random variables, each with expectation greater than zero, together with the law of large numbers implies that the sum tends to $\infty$ as $m \to \infty$. Since each of the random variables in the sum is at most $1$ we conclude that $M < \infty$ a.s. Since the martingale \eqref{eqn:thismartingale} is bounded above up to time $M$, the optional stopping theorem for martingales bounded above implies that the expected value of \eqref{eqn:thismartingale} at this stopping time is at least $0$, which implies that $\mathbb E[1- (2-\chi)M] \geq 0$ and hence $\mathbb E[M] \leq 1/(2-\chi)$.  Since $\cou(-J_M,0) = 1$, it follows that $-J_M$ is left of the location of $V_{-1}$ (since the latter corresponds to the {\em first} time the count reaches $1$, counting left from zero).  Since the number of burgers in $X(-j,0)$ is increasing in $j$, it follows that $X(-J_M, 0)$ has at least as many burgers as $\overline{V_{-1}E_{-1}}$.  The expectation of the number of burgers in $X(-J_M,0)$ is finite (since $X(-J_M, 0)$ has at most $M$ burgers), and hence so is the expectation of the latter.
\end{proof}

The lemmas that follow will involve several sequences related to the $J_m$ defined above (each defined for $m \geq 1$).  For convenience we define them here:

\begin{enumerate}
\item {\bf $m$th empty order stack:} $O_m$ is the $m$th smallest value of $j \geq 0$ with the property that $X(-j,0)$ has an empty order stack. We have not proved that $O_1$ is a.s.\ finite, but if it is then we may observe that the words $X(-O_m, -O_{m-1}-1)$ are i.i.d.\ (if we formally set $O_0 = -1$) and that each has an empty order stack.
\item {\bf $m$th empty burger stack:} $B_m$ is the $m$th smallest value of $j \geq 1$ with the property that $X(1,j)$ has an empty burger stack.
   We have not proved that $B_1$ is a.s.\ finite, but if it is, then we may observe that the words $X(B_{m-1}+1, B_m)$ are i.i.d.\ (if we formally set $B_0 = 0$) and that each has an empty burger stack.
\item {\bf $m$th left record minimum:} $L_m$ is the smallest value of $j \geq 0$ for which $\cou(-j,0) = m$.  Thus $X(-L_m, 0) = \overline{V_{-m}E_{-m} \ldots V_{-1} E_{-1}}$ and $X(-L_m, -L_{m-1}-1) = V_{-m} E_{-m}$ (if we formally set $L_0 = -1$).
\item {\bf $m$th right record minimum:} $R_m$ is the smallest value of $j \geq 1$ for which $\cou(1,j) = -m$.  Thus $X(1, R_m) = \overline{E_1V_1 \ldots E_m V_m}$ and $X(R_{m-1}+1, R_m) = E_mV_m$ (if we formally set $R_0 = -1$).
\item {\bf $m$th hamburger-order-free left minimum:} $L^H_m$ is the $m$th smallest value of $j \geq 0$ with the property that $X(-L^H_j, 0)$ has no hamburger orders and $L^H_j =L_{j'}$ for some $j'$ (i.e., $-L^H_j$ corresponds to a new record minimum of $\cou(\cdot, 0)$).  As in the cases above, the words $X(-L^H_j, -L^H_{j-1} -1)$ are i.i.d., and each one is a product of some sequence of words of the form $X(-L_{j'}, -L_{j'-1}-1)$.
\item {\bf $m$th hamburger-free right minimum:} similarly, $R^H_m$ is the $m$th smallest value of $j \geq 1$ with the property that $X(1,R^H_m)$ has no hamburgers and $R^H_j =R_{j'}$ for some $j'$.
\end{enumerate}

\begin{lemma} \label{l.finiteexpectationequivalence}
The following are equivalent:
\begin{enumerate}
\item \label{i.eleqinfty} $\mathbb E[|E|] < \infty$.
\item \label{i.hamklleqinfty} $\mathbb E[|X(-L^H_1,0)|] < \infty$.
\item \label{i.hamkrleqinfty} $\mathbb E[|X(1,R^H_1)|] < \infty$.
\item\label{i.klleqinfty} $O_1$ is a.s.\ finite and $\mathbb E[|X(-O_1,0)|] < \infty$.
\item \label{i.krleqinfty} $B_1$ is a.s.\ finite and $\mathbb E[|X(1,B_1)|] < \infty$.
\end{enumerate}
\end{lemma}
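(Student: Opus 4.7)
My plan is to show each of (2), (3), (4), (5) is equivalent to (1) via a renewal/drift analysis on the i.i.d.\ block decomposition $(E_iV_i)_{i \geq 1}$ of the future process (and its analogue $(E_{-i}V_{-i})_{i \geq 1}$ on the past) that was introduced just before Lemma \ref{l.expEfinite}. The central observation is that each stopping event defining (2)--(5) can be phrased as a hitting time of zero for a nonnegative-integer-valued Markov chain indexed by blocks, whose single-block transition law is amenable to Lemmas \ref{l.finitelystoppedexpectedsum} and \ref{l.finiteexpectationtozero}.

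For the forward direction (1) $\Rightarrow$ (X) with $X \in \{2,3,4,5\}$, I would track an appropriate chain: for (5), the burger-stack size $b_m$ of $X(1,R_m)$; for (3), the hamburger count in this stack; and analogously on the past for (2) and (4). Each chain's transitions are governed by a single i.i.d.\ block $E_mV_m$, so I would verify the four hypotheses of Lemma \ref{l.finiteexpectationtozero}: (a) bounded initial expectation, since the chain starts at $0$; (b) positive chance to reach zero when close to zero, via the single-block event $\{V_m = \x\} \cap \{E_m = \emptyset\}$ (which has probability at least $p/4$ and deterministically strips the top burger); (c) uniformly negative drift when far from zero --- the letter $V_m$ removes one element from the top with probability at least $p$ (from $V_m = \x$) or with probability $(1-p)/2$ when $V_m$ happens to match the top type, while the $\cou$-balanced excursion $E_m$ contributes only bounded expected stack growth under hypothesis (1); and (d) bounded expectation near zero, which again follows from $\mathbb E[|E_m|] < \infty$. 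Once the chain is shown to hit zero in finitely many blocks on average, Lemma \ref{l.finitelystoppedexpectedsum} applied with $\psi = |E_iV_i|$ converts this finite expected hitting time into finite expected reduced-word length at the corresponding stopping time.

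For the reverse direction (X) $\Rightarrow$ (1), I would argue by contrapositive. If $\mathbb E[|E|] = \infty$, then on a uniform positive-probability event specifying a few initial letters (for instance, requiring the first excursion $E_1$ to start with $X(1) \in \{\C, \H\}$ and the burger stack to remain nonempty for its duration --- an event on which the reduced-word length at the later stopping time dominates $|E_1|$), the reduced word at the (X)-stopping time has infinite expectation. Past/future symmetries (2) $\Leftrightarrow$ (3) and (4) $\Leftrightarrow$ (5) follow from the fact that both sides admit the same block-chain analysis: by stationarity, the past blocks $(E_{-i}V_{-i})$ are jointly i.i.d.\ with the same marginals as the future blocks (the $V_{-i} \in \{\C,\H\}$ swap with $V_i \in \{\ch,\h,\x\}$, but this preserves the finite-expectation structure of the hitting-time chain).

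The hard part will be verifying the uniform negative drift in hypothesis (c). Although $V_m$ strips one element off the top with probability at least $p > 0$, the excursion $E_m$ can grow $b_m$ by depositing burgers that fail to be matched (for instance, the excursion $\H\ch$ matches $\ch$ against a pre-existing $\C$ and leaves an extra $\H$ on the stack), so the net per-block drift is not manifestly negative. I expect the resolution to invoke the constants from Section \ref{ss.variance}: the expected burger-stack residual of a standalone excursion should relate to $\chi$ in a way that, combined with the $V_m$-reduction, gives a strictly negative drift under hypothesis (1). A secondary obstacle is the lower bound $|X(1, B_1)| \geq |E|$ needed for the contrapositive in the reverse direction, which requires exhibiting a positive-probability event on which $B_1$ exceeds the duration of the first excursion.
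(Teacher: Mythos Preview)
Your overall architecture---decompose into i.i.d.\ excursion blocks and feed a block-indexed chain into Lemmas~\ref{l.finitelystoppedexpectedsum} and~\ref{l.finiteexpectationtozero}---is exactly the paper's. Your single-type chains for (2) and (3) are essentially what the paper runs (it tracks the number of $\h$-orders in $X(-L_m,0)$, giving the clean recursion $H(m)=\max\{H(m-1)-h_m,0\}+o_m$ with $\mathbb E[h_m]=\mathbb E[o_m]+\tfrac12$). One small correction: your proposed event for hypothesis~(b), namely $\{V_m=\x\}\cap\{E_m=\emptyset\}$, only strips the \emph{top} burger, which need not be a hamburger; for the single-type chain you instead want an event like ``$E_{m+1}$ has at least $k$ orders of the tracked type and no burgers of that type,'' which does have positive probability for each $k$.

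The genuine gap is your direct route (1)$\Rightarrow$(4) and (1)$\Rightarrow$(5) via the \emph{total} stack size $b_m$. This chain satisfies neither (b) nor (c) of Lemma~\ref{l.finiteexpectationtozero}, and the obstruction is structural rather than a matter of sharper constants. Since each $E_{m+1}$ is $\cou$-balanced (say $s$ orders and $s$ burgers), appending it to a stack of size $b_m$ removes at most $s$ burgers and then deposits exactly $s$, so the post-$E$ stack has size $\geq b_m$; the single order $V_{m+1}$ then removes at most one. Hence $b_{m+1}\geq b_m-1$ \emph{deterministically}, so $p_k=0$ for all $k\geq 2$ and (b) fails. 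Moreover (c) fails too: condition on a past that leaves the stack consisting entirely of $\H$'s. Then the $\ch$-orders in $E_{m+1}$ (on average $\mathbb E[|E|]/4$ of them) match nothing and pass to the order stack, while all $s$ new burgers land on top; the per-block drift in $b_m$ is then about $\mathbb E[|E|]/4-1$, which is \emph{positive} once $\mathbb E[|E|]>4$. So the ``uniformly negative drift'' you hope to extract from $\chi$ simply is not there for the total-size chain. The symmetric obstruction kills the direct approach to (4).

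The paper's resolution is not a sharper drift estimate but a two-stage route: prove (1)$\Rightarrow$(2)$\Rightarrow$(4) and (1)$\Rightarrow$(3)$\Rightarrow$(5). The first stage is your single-type chain. In the second stage the i.i.d.\ blocks are the $L^H$-words $X(-L^H_m,-L^H_{m-1}-1)$ (resp.\ $R^H$-words), each of which is a concatenation of several $\overline{V_{-i}E_{-i}}$'s and hence is \emph{not} $\cou$-balanced: its expected number of $\C$'s strictly exceeds its expected number of $\ch$'s (the $E$'s contribute equally, the $V$'s contribute only burgers). That strict imbalance is exactly what restores the negative drift for the remaining type, and it also lets the chain jump to zero from any level. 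Your plan collapses these two stages into one and thereby discards the imbalance that makes the argument work.

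For the converse, the paper does not run a contrapositive on a conditioned event. It uses the monotonicity of the burger stack of $X(-j,0)$ in $j$ (resp.\ the order stack of $X(1,j)$) to dominate the burgers in $E_{-1}$ (resp.\ orders in $E_1$) directly, which is shorter than what you outline.
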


\begin{proof}
{\bf \ref{i.eleqinfty} implies \ref{i.hamklleqinfty}:}    Assume \ref{i.eleqinfty}.  Recall that the words $E_{-j}$ are i.i.d., that each a.s.\ has no $\x$ symbols and as many orders as burgers (possible example: $\ch \h \h \ch \C \C \C \H$), that the expected number of symbols of each of the types $\{\ch, \h, \C, \H \}$ in $E_{-1}$ is finite, and that these expectations are equal.  Recall also that the $V_j$ (for $j < 0$) are independent i.i.d.\ samples from $\{\C, \H \}$.
Let $H(m)$ be the number of hamburger orders in $X(-L_m,0) = \overline{V_{-m}E_{-m} \ldots V_{-1} E_{-1}}$.
Note that for any $m > 1$ we have that \begin{equation} \label{e.deltahm} H(m) = \max \{H(m-1) - h_m, 0 \} + o_m, \end{equation} where $h_m$ is the number of hamburgers in $\overline{V_{-m} E_{-m}}$ and $o_m$ is the number of hamburger orders in $\overline{V_{-m} E_{-m}}$.  Note that the pair $(h_m, o_m)$ is independent of $H(m-1)$. Note also that $\mathbb E[h_m] = \mathbb E[o_m] + .5$ (since the expected number of hamburger orders in $E$ equals the expected number of hamburgers and $V_{-m} = \H$ with probability $.5$).

Now we can rewrite \eqref{e.deltahm}, using the fact that $\max\{A,0 \} = A - 1_{A<0} A$, as $$H(m) - H(m-1) = o_m - h_m + (h_m - H(m-1)) 1_{\{H(m-1)-h_m<0 \}},$$ and
$\mathbb E[(h_m-k) 1_{h_m > k} ] \leq \mathbb E[h_m 1_{h_m > k }] \downarrow 0$ as $k \to \infty$ by assumption.
Thus, there exists a $C > 0$ such that for all $k > C$ we have $\mathbb E[H(m)-H(m-1) | H(m-1) = k] \leq - .4$.  It follows from Lemma \ref{l.finiteexpectationtozero}, applied with $\mathcal Z_m = H(m)$ and $Z_m = \overline{V_{-m}E_{-m}}$, that the expected number of $V_{-j}E_{-j}$ sequences concatenated to produce $X(-L^H_1,0)$ is finite, and it then follows from \ref{i.eleqinfty} and Lemma \ref{l.finitelystoppedexpectedsum} that the expected sum of the lengths of these words is finite.

{\bf \ref{i.eleqinfty} implies \ref{i.hamkrleqinfty}:}
The argument used to show that \ref{i.eleqinfty} implies \ref{i.hamklleqinfty} applies almost verbatim here, using $R_m$ in place of $L_m$.

{\bf \ref{i.hamklleqinfty} implies \ref{i.klleqinfty}:}
We use the argument used to show \ref{i.eleqinfty} implies \ref{i.hamklleqinfty}.  In that case, we concatenated i.i.d.\ words of the form $X(-L_m, -L_{m-1} -1) = \overline{V_{-m} E_{-m}}$ until we produced a word with no hamburger orders, which was equal to $X(-L^H_1, 0)$.  In this case, we start by concatenating the i.i.d.\ words of the form $X(-L^H_m, -L^H_{m-1} - 1)$, each of which has no orders of type $\h$ or $\x$ (possible example: $\ch \ch \ch \ch \C \H \C \C \C \H$), and we continue until we produce a word with no orders at all.  This word is necessarily at least as long as $X(-O_1,0)$ (since the burger stack is increasing in time, and $X(-O_1,0)$ corresponds to the {\em first} time the order stack is empty), so it suffices to bound its expected length.

We are assuming that the expected length of $X(-L^H_1, 0)$ is finite; since this word has at least as many burgers as $X(-L_1,0)$, this implies that the expected number of burgers in $X(-L_1,0)$ is also finite, and since the number of orders in $X(-L_1,0)$ is one less than the number of burgers, we find that the expected total length of $X(-L_1,0)$ is finite, as is the expected total length of $E_{-1}$.  That is, we have established that \ref{i.hamklleqinfty} implies \ref{i.eleqinfty}. We may therefore use the fact (established at the end of the \ref{i.eleqinfty} implies \ref{i.hamklleqinfty} argument) that the expected number of $V_{-j}E_{-j}$ sequences concatenated to produce $X(-L^H_1,0)$ is some finite number $k$.

Note that the expected net number of cheeseburgers in $X(-L_m,-L_{m-1}-1) = V_{-m} E_{-m}$ (i.e., the expected number of $\C$ symbols minus the number of $\ch$ symbols) is $1/2$, since this net number is additive when we concatenate words with no $\x$ symbols.  We similarly conclude by  \eqref{eqn::stoppingtotal} that the expected net number of cheeseburgers in $X(-L^H_1,0)$ is $k/2$, where $k$ is as defined just above.

As we concatenate i.i.d.\ copies of the word $X(-L^H_1,0)$, the number of $\ch$ symbols in the reduced form of the concatenated word is a Markov chain on $\Z_+$. The above observations imply that the expected change in this Markov chain value, during a step that starts at position $j$, is a value that tends to $-k/2$ as $j \to \infty$. For any $j$, this expected change is bounded above by the finite expected length of $X(-L^H_1,0)$. Thus, Lemma \ref{l.finiteexpectationtozero}, implies that the expected number of steps until this chain reaches zero is finite.  Similarly, Lemma \ref{l.finitelystoppedexpectedsum} then implies that the expected total length of the words concatenated is finite.

{\bf \ref{i.hamkrleqinfty} implies \ref{i.krleqinfty}:}
This is essentially the same as the proof that \ref{i.hamklleqinfty} implies \ref{i.klleqinfty}.

{\bf Either \ref{i.klleqinfty} or \ref{i.krleqinfty} implies \ref{i.eleqinfty}:}  Note that the number of burgers in $X(-O_1,0)$ is at least the number in $E_{-1}$, and the number of orders in $X(1,B_1)$ is at least the number in $E_1$.  Thus if either $\mathbb E[|X(-O_1,0)|] < \infty$ or $\mathbb E[|X(1,B_1)|] < \infty$, then (recalling that the expected number of orders in $E$ equals the expected number of burgers in $E$) we have $\mathbb E[|E|] < \infty$.
\end{proof}

\begin{lemma} \label{l.stackfraction}
If $\mathbb E[|E|] < \infty$ then the limit, as $n \to \infty$, of the fractions of $\H$ symbols among the top $n$ elements in $X(-\infty,0)$ is equal to $.5$ almost surely. Similarly, as $n$ tends to infinity, the fraction of $\ch$, $\h$ and $\x$ symbols among the leftmost $n$ elements of $X(1,\infty)$ tend almost surely to constants (the first two equal, by symmetry). These constants are all positive (except in case $p=1$, where one has only $\x$ symbols and no $\ch$ or $\h$ symbols).

On the other hand, if $\mathbb E[|E|] = \infty$ then the limit as $n \to \infty$ of the fraction of $\x$ symbols among the leftmost $n$ elements of $X(1,\infty)$ tends almost surely to zero.
\end{lemma}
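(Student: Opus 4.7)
\medbreak\noindent\textbf{Proof proposal.} The plan is to represent both semi-infinite stacks as concatenations of i.i.d.\ renewal blocks and apply the strong law of large numbers (SLLN), with the finite/infinite expected block length dichotomy deciding whether the limiting fractions are positive or vanish.

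\emph{Case $\mathbb E[|E|]<\infty$.} By Lemma \ref{l.finiteexpectationequivalence} both $\mathbb E[|X(-O_1,0)|]$ and $\mathbb E[|X(1,B_1)|]$ are finite. I would first observe that the blocks $X(-O_k,-O_{k-1}-1)$ (with the convention $O_0:=-1$) are i.i.d., each with an empty order stack, hence pure-burger words; their reduced concatenation is therefore just juxtaposition, and $X(-O_m,0)$ coincides with the top $|X(-O_m,0)|$ elements of $X(-\infty,0)$. The $\C\!\leftrightarrow\!\H$ symmetry of $\mu$ gives $\mathbb E[\#\H(X(-O_1,0))]=\mathbb E[|X(-O_1,0)|]/2$, and the SLLN applied coordinate-wise to the i.i.d.\ pairs (length, $\#\H$) forces $\#\H/\text{length}\to 1/2$ at renewal milestones; a sandwich over $n\in[|X(-O_{m-1},0)|,|X(-O_m,0)|]$ then yields the top-$n$ statement. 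The argument for $X(1,\infty)$ is the mirror image via the pure-order i.i.d.\ blocks $X(B_{k-1}+1,B_k)$; each limiting fraction has the form $\mathbb E[\#\sigma(X(1,B_1))]/\mathbb E[|X(1,B_1)|]$, and whenever type $\sigma\in\{\ch,\h,\x\}$ has positive $\mu$-mass the event $\{X(1)=\sigma\}$ (which forces $B_1=1$ and the block to equal $\sigma$) makes the numerator strictly positive, with the $\ch/\h$ equality inherited from $\C\!\leftrightarrow\!\H$ symmetry.

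\emph{Case $\mathbb E[|E|]=\infty$.} Lemma \ref{l.finiteexpectationequivalence} now gives $\mathbb E[|X(1,B_1)|]=\infty$. The critical observation I would establish first is that $\#\x(X(1,B_1))\le 1$, with equality iff $X(1)=\x$: if $X(1)\in\{\C,\H\}$ then by definition of $B_1$ the burger stack is nonempty throughout $\{2,\ldots,B_1\}$, so any intervening $\x$ cancels the top burger and disappears from the reduced form. Hence $\mathbb E[\#\x(X(1,B_1))]=p/2<\infty$. Applying the SLLN for nonnegative i.i.d.\ variables (valid in the infinite-mean case) to the blocks $X(B_{k-1}+1,B_k)$ gives $|X(1,B_m)|/m\to\infty$ together with $\#\x(X(1,B_m))/m\to p/2$, so the ratio tends to $0$ almost surely. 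To pass to arbitrary ``leftmost $n$'', choose $m=m(n)$ minimal with $|X(1,B_m)|\ge n$; the monotonicity of the order stack in time makes the leftmost $n$ orders of $X(1,\infty)$ a prefix of $X(1,B_m)$, so their $\x$-count is at most $m$, and $m/n\le m/|X(1,B_{m-1})|\to 0$ by the infinite-mean SLLN.

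The exceptional event that $B_k=\infty$ for some $k$ is handled by noting that after the last finite $B_k$ the burger stack stays nonempty forever, so no further $\x$ joins the order stack, whence $\#\x(X(1,\infty))<\infty$ while $|X(1,\infty)|=\infty$ almost surely (the order stack is monotone nondecreasing in time and matched a.s.\ by Proposition \ref{p.allmatched}). The main obstacle is precisely the infinite-mean regime: the usual finite-mean SLLN is inapplicable, but the SLLN for nonnegative i.i.d.\ variables still supplies the required super-linear growth of the denominator, which is exactly what dominates the linearly-growing $\x$-count and drives the ratio to zero; the technical step that converts the renewal-milestone statement into the ``leftmost $n$'' statement is the inversion $n\mapsto m(n)$ described above.
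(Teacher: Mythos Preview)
Your proposal is correct and follows essentially the same route as the paper: both decompose the semi-infinite stacks into the i.i.d.\ renewal blocks $X(-O_k,-O_{k-1}-1)$ and $X(B_{k-1}+1,B_k)$, invoke Lemma~\ref{l.finiteexpectationequivalence} to transfer the finiteness hypothesis on $\mathbb E[|E|]$ to the block lengths, and use the law of large numbers together with the key observation that an $\x$ can enter the order stack of $X(1,\cdot)$ only when the burger stack is empty (so the $\x$-count in $X(1,B_m)$ grows at most linearly in $m$). Your write-up is in fact more careful than the paper's sketch---you make explicit the sandwich passing from renewal milestones to arbitrary $n$, the infinite-mean SLLN for the superlinear growth of $|X(1,B_m)|$, and the exact value $\mathbb E[\#\x(X(1,B_1))]=p/2$---but the underlying argument is the same. (One small remark: the event $B_k=\infty$ that you guard against is in fact a.s.\ null by Proposition~\ref{p.allmatched}, since a permanently nonempty burger stack would force some burger to remain unmatched, so that paragraph is unnecessary though harmless.)
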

\begin{proof}
Consider the sequence of words $X(-k,0)$, indexed by positive $k$.  As we have noted before, the words $X(-O_m,-O_{m-1}-1)$ are i.i.d., each consisting entirely of burgers, and $X(-\infty,0)$ is the concatenation of these all-burger words.  Lemma \ref{l.finiteexpectationequivalence} implies that if $\mathbb E[|E|] < \infty$ then the lengths of these burger sequences have finite expectation, and the law of large numbers then implies the number of hamburgers and cheeseburgers in $X(-O_m,0)$ are both (up to $o(m)$ errors) given by constant multiples of $m$, almost surely.  This implies the first statement in the lemma.  The proof of the second statement is analogous, using $B_m$ instead of $O_m$. To see that the constants are positive, it suffices to observe that $X(1,B_1)$ has {\em some} positive probability of containing an order of each of the three order types.

For the final statement, we note that an $\x$ symbol can be added to the order stack of the sequence $X(1,k)$ only at times when the burger stack is empty.  The number of $\x$ symbols in $X(1,B_m)$ can grow as a function of $m$, but it can grow by at most $1$ each time that $m$ increases by $1$.  If $\mathbb E[|E|] = \infty$, then Lemma \ref{l.finiteexpectationequivalence} and the law of large numbers imply that the number of orders in $X(1,B_m)$ a.s.\ grows faster than any constant times $m$, while the number of $\x$ symbols grows like a constant times $m$.
\end{proof}

The following proposition is not needed for the proof of Theorem \ref{t.brownianlimit}, but we include it because it will be interesting from the point of view of random planar maps.  (In a sense, it will imply that the infinite random surface models one obtains when $p > 1/2$ a.s.\ have infinitely many small bottlenecks surrounding any given point.)

\begin{proposition} \label{p.bottleneck}
Let $K$ be the infimum over the set of positive numbers such that there exist $m_- < 0$ and $m_+>0$ such that \begin{enumerate}
\item $X(m_-, 0)$ is a word with no orders and $K$ burgers.
\item $X(1,m_+)$ is a word with no burgers and $K$ orders.
\item $X(m_-, m_+) = \emptyset$
\end{enumerate}
If $\mathbb E[|E|] < \infty$ then $K$ is a.s.\ finite and $\mathbb E[K] < \infty$.
\end{proposition}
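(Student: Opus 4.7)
The plan is to reduce the problem to a renewal-theoretic calculation and apply Lemma~\ref{l.finiteexpectationtozero} together with Wald's identity (Lemma~\ref{l.finitelystoppedexpectedsum}). By Lemma~\ref{l.finiteexpectationequivalence}, the hypothesis $\mathbb E[|E|] < \infty$ is equivalent to $\mathbb E[|X(-O_1,0)|] < \infty$ and $\mathbb E[|X(1,B_1)|] < \infty$, yielding the decompositions $X(-\infty,0) = \cdots P_3 P_2 P_1$ and $X(1,\infty) = Q_1 Q_2 Q_3 \cdots$, where $P_m = X(-O_m,-O_{m-1}-1)$ consists only of burgers, $Q_n = X(B_{n-1}+1,B_n)$ consists only of orders, $(P_m)$ and $(Q_n)$ are each i.i.d.\ sequences of finite expected length, and the two sequences are independent of each other (one depends on $\{X(k):k\le 0\}$, the other on $\{X(k):k\ge 1\}$). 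Under these decompositions, the bottleneck conditions 1--3 with $m_- = -O_m$ and $m_+ = B_n$ become equivalent to $\overline{P_m \cdots P_1 Q_1 \cdots Q_n} = \emptyset$, in which case $K = \sum_{i=1}^m |P_i|$.

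I would set up a Markov chain tracking the current reduced residual $U_t$ (which always has the form ``unmatched orders followed by unmatched burgers''): at each step, either prepend a fresh $P$-chunk on the left (chosen when $U_t$ has unmatched orders, so that new burgers can cancel them), or append a fresh $Q$-chunk on the right (chosen when $U_t$ has unmatched burgers), with the process terminating at the first time $T$ with $U_T = \emptyset$. Letting $\mathcal{Z}_t = |U_t|$, I would apply Lemma~\ref{l.finiteexpectationtozero}: the initial state has $\mathcal{Z}_0 = 0$; from any state with $\mathcal{Z}_t = k$ there is positive conditional probability that the next chunk consists of exactly the symbols needed to cancel $U_t$ (for example, a $Q$-chunk of $k$ consecutive $\x$ symbols will cancel any burger residual of length $k$, and this configuration has positive probability under the law of $Q_1$); and for large $\mathcal{Z}_t$ one obtains a uniformly negative drift, drawing on Lemma~\ref{l.stackfraction}: under $\mathbb E[|E|] < \infty$ the density of $\x$ symbols in $X(1,\infty)$ is strictly positive, and each $\x$ cancels any burger type, so the expected number of cancellations per appended $Q$-chunk grows linearly in the length of the burger part of $U_t$ (up to the chunk size), dominating the finite expected contribution of newly added unmatched symbols.

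Once $\mathbb E[T] < \infty$ is established, Wald's identity applied to the i.i.d.\ sequence $(|P_i|)$ gives $\mathbb E[\sum_{i=1}^{M_T} |P_i|] \leq \mathbb E[T] \cdot \mathbb E[|P_1|] < \infty$, where $M_T \leq T$ is the number of $P$-chunks appended before termination. Since $K$ is the minimum over all valid $(m_-, m_+)$, it is dominated by the burger count produced by this specific strategy, and hence $\mathbb E[K] < \infty$.

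The hardest step is verifying the negative-drift hypothesis rigorously. The residual $U_t$ carries nontrivial type information (the orders present are of types that cannot match the adjacent burgers, and vice versa), so the cancellation dynamics upon appending $Q_{n+1}$ or prepending $P_{m+1}$ depend on the fine-grained type counts within the new chunk and on how $\x$ orders distribute their cancellations across burger types. Converting the qualitative density statement of Lemma~\ref{l.stackfraction} into a quantitative Lyapunov estimate strong enough to feed into Lemma~\ref{l.finiteexpectationtozero} is where I expect the real work to lie.
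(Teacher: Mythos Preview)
Your overall architecture is the same as the paper's: decompose the left side into i.i.d.\ all-burger blocks $P_m=X(-O_m,-O_{m-1}-1)$ and the right side into i.i.d.\ all-order blocks $Q_n=X(B_{n-1}+1,B_n)$, track a reduced residual, feed it into Lemma~\ref{l.finiteexpectationtozero}, and finish with Lemma~\ref{l.finitelystoppedexpectedsum}. Where you diverge from the paper is precisely at the step you flag as hardest, and there the proposal has a real gap.

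Your chain appends a $Q$-chunk when $U_t$ has burgers and prepends a $P$-chunk when $U_t$ has orders; but a generic residual has both, so the rule is underspecified. More seriously, the drift need not be uniformly negative in your setup. If $U_t$ consists of a long string of $\ch$ orders (no $\x$'s, no burgers), then prepending a $P$-chunk with $c$ cheeseburgers and $h$ hamburgers changes $|U_t|$ by $h-c$, whose conditional expectation is zero by symmetry. So the Lyapunov condition of Lemma~\ref{l.finiteexpectationtozero} can fail along such states, and your heuristic that ``cancellations grow linearly in the burger part of $U_t$'' does not rescue this: the number of cancellations per step is bounded by the (finite-mean) chunk length, not by $|U_t|$.

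The paper sidesteps this entirely with one extra idea. Rather than stepping through the $P$- and $Q$-blocks separately, it first passes to the \emph{match-up lengths}: those $M$ for which $|X(-O_i,0)|=M$ and $|X(1,B_j)|=M$ for some $i,j$. Because both renewal sequences have finite-mean aperiodic increments, standard renewal coupling gives that successive match-up lengths have i.i.d.\ finite-mean gaps. The payoff is structural: at a match-up length the residual $X(-O_i,B_j)$ is forced to have the form $\ch^{\,k}\H^{\,k}$ or $\h^{\,k}\C^{\,k}$ (equal counts of one order type and the opposite burger type). Going from $M_k$ to $M_{k+1}$ amounts to left-multiplying by a burger word $W_1$ and right-multiplying by an order word $W_2$ of the same length. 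By symmetry the $\C/\H$ counts in $W_1$ and the $\ch/\h$ counts in $W_2$ have equal means, so when $k$ is large the expected change in $k$ is exactly $-\tfrac12\,\mathbb E[\#\x\text{ in }W_2]<0$. Now Lemma~\ref{l.finiteexpectationtozero} applies cleanly to the one-dimensional process $k$, and Lemma~\ref{l.finitelystoppedexpectedsum} finishes.

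In short: your skeleton is right, but without the match-up restriction the residual carries type information that spoils the uniform negative drift. Inserting the match-up step collapses the state to a single integer and makes the symmetry argument immediate.
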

\begin{proof}

Assume $\mathbb E[|E|] < \infty$.  As noted in the proof of Lemma \ref{l.stackfraction}, the gaps between $a_m := |X(-O_m,0)|$ (as $m$ increases) and between $b_m := |X(1,B_m)|$ (as $m$ increases) are i.i.d.\ positive random variables with finite expectation (and clearly every possible positive gap length has positive probability). 

Write $\alpha_k = k - \max \{a_m : a_m \leq k \}$ and $\beta_k = k - \max\{b_m: b_m \leq k \}$. Both $\alpha_k$ and $\beta_k$ can be understood as ergodic Markov chains. To see this, note that if $\alpha_k =j$, then $\alpha_{k+1} \in \{j+1, 0 \}$ and $\mathbb P\bigl [\alpha_{k+1} = 0 \,|\, \alpha_k = j\bigr] =\mathbb P\bigl[a_1 = j+1 \,| \, a_1 > j\bigr]$. The transition kernel for $\beta_k$ can be written similarly. Since $a_\cdot$ and $b_\cdot$ each have increments with finite expectation, the corresponding ergodic Markov chains assume the value $0$ (at stationarity) with probability one over the corresponding expectation. Since $\alpha_\cdot$ and $\beta_\cdot$ are independent, the pair $(\alpha_\cdot, \beta_\cdot)$ is also a Markov chain that (at stationarity) assumes the value $(0,0)$ with positive probability.
 
Call a non-negative integer $M$ a ``match up length'' if there exists an $i$ such that $a_i = |X(-O_i, 0)| = M$ and a $j$ such that $b_j = |X(1,B_j)| = M$.  The match up lengths are a random subset of $\mathbb Z_+$ and correspond to the $k$ for which $(\alpha_k, \beta_k) = (0,0)$. The discussion above implies that the gaps between successive match up lengths are i.i.d.\ random variables with finite expectation.  Denote by $M_k$ the $k$th such match up length.

For each $M_k$, we have a word $X(-O_i,0)$ of length $M_k$ comprised entirely of burgers and a word $X(1,B_j)$ of length $M_k$, comprised entirely of orders.  We claim that $X(-O_i,B_j)$ is a word that consists of a sequence of orders of one type (either $\ch$ or $\h$) followed by a same-length sequence of burgers of opposite type (either $\C$ or $\H$), e.g.,
$$\ch\ch\ch\ch\ch\H\H\H\H\H.$$
(This can be seen by starting with the all-burger word $X(-O_i,0)$ and multiplying on the right by the symbols in $X(1,B_j)$ one at a time.  The only way there can be orders in $X(-O_i,B_j)$ is if all of the burgers of one type are consumed before this process terminates, and in that case only orders corresponding to that burger type can appear in $X(-O_i,B_j)$.)
When we shift from $k$ to $k+1$, we multiply this word on the left and right by random words $W_1$ and $W_2$.  We know that $\cou(W_1) + \cou(W_2) = 0$.  By symmetry, the expected number of symbols of type $\H$ in this pair of words equals the expected number of symbols of type $\C$.  (These expectations are finite by Lemma \ref{l.finiteexpectationequivalence}.) Similarly for $\h$ and $\ch$ symbols.  The expected number of $\x$ symbols is some positive constant $a$.

If the length of $X(-O_i,B_j)$ is long enough, then the probability that either $W_1$ or $W_2$ is longer than half of that length is close to zero.  Thus, conditioned on the length being at least some constant value, the expected change in length as one goes from $k$ to $k+1$ is close to $-a/2$.  It then follows from Lemma \ref{l.finiteexpectationtozero} that the expected value of the smallest $k$ for which this length is zero is finite, and the result then follows from Lemma \ref{l.finitelystoppedexpectedsum}.
\end{proof}

\subsection{Some monotonicity observations} \label{ss.monotonicity}


In this section, it will be convenient to fix a semi-infinite stack present at time zero (the stack $X(-\infty, 0)$) and also to relax the assumption in Definition \ref{andef} that $(\dis_n, \cou_n) = (0,0)$.  To give some notation for this, let $S_0$ be a semi-infinite stack of burgers containing infinitely many burgers of each type {\em together} with a corresponding diagram in $\R^2$ such as the one given in Figure \ref{stackchange}.  The diagram is determined by the burger stack once we know the location of the uppermost vertex (the {\em tip}) in Figure \ref{stackchange}, which we now allow to be {\em any} lattice point of $\mathbb Z^2$ whose coordinate sum is even.  Slightly abusing notation, we will write $(\dis_0,\cou_0)$ for the location of the tip of $S_0$ (which, in this section only, we do not require to be at the origin).

Given $S_0$, we can generate a sequence of stacks $S_1, S_2, \ldots$ in the usual manner by applying the moves in Figure \ref{stackchange} that correspond to $X(1), X(2), \ldots$.  For $n > 0$ we write $(\dis_n, \cou_n) := \bigl(\dis_0 + \dis(1,n), \cou_0 + \cou(1,n)\bigr)$ as before, so that $(\dis_n, \cou_n)$ is the location of the tip of $S_n$.

Given any such embedded stacks $S$ and $\tilde S$ we write $S \leq \tilde S$ if the tips of the path lie on the same horizontal line and the path describing $S$ (as in Figure \ref{stackchange}) lies to the left of the path describing $\tilde S$ --- i.e., every horizontal line intersects the $S$ path at a point equal to or left of where it intersects the $\tilde S$ path, see Figure \ref{twostacks}.  The following lemma is immediate from an inspection of the cases in Figure \ref{stackchange}.

\begin {figure}[!ht]
\begin {center}
\includegraphics [height=1in]{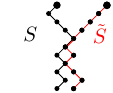}
\caption {\label{twostacks} Upper portion of embedded stacks $S \leq \tilde S$ (partially overlapping).}
\end {center}
\end {figure}

\begin{lemma} \label{l.monotonicity}
Suppose embedded stacks $S_0 \leq \tilde S_0$ are fixed and that $S_1, S_2, \ldots$ and $\tilde S_1, \tilde S_2, \ldots$ are generated from $S_0$ and $\tilde S_0$ respectively using the same sequence $X(1), X(2), \ldots$ as in Figure \ref{stackchange}.  Then $S_n \leq \tilde S_n$ for all $n > 0$.
\end{lemma}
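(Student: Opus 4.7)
The plan is to argue by induction on $n$. Since both sequences are driven by the same symbols $X(1), X(2), \ldots$, it suffices to show that if $S \leq \tilde S$, then applying any single symbol $X \in \{\C,\H,\ch,\h,\x\}$ to both stacks yields $S' \leq \tilde S'$. I would verify this by inspecting the five cases depicted in Figure \ref{stackchange}, after recording one parity observation: every vertex on an embedded stack has coordinate sum of the same parity as its tip (since each edge shifts $x + y$ by $0$ or $+2$). In particular, if $S \leq \tilde S$, then because the two tips lie at the same height, at every integer height the horizontal gap between the two paths is a non-negative \emph{even} integer, so a strict inequality at such a height is automatically at least $2$.

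For $X \in \{\C, \H\}$ the operation appends a new edge and shifts both tips by the same vector (namely $(-1,+1)$ for $\C$ and $(+1,+1)$ for $\H$), leaving the portions below the old tips unchanged; monotonicity is immediate. For $X = \x$ the topmost burger of each stack is removed, and the tip shift depends on whether that burger is a $\C$ (shift $(+1,-1)$) or an $\H$ (shift $(-1,-1)$); only the subcase where $S$'s top is $\C$ and $\tilde S$'s top is $\H$ is nontrivial, because then the two tips shift in opposite horizontal directions. Inspection at height $\cou_0 - 1/2$ yields $\tilde\dis_0 - \dis_0 \geq 1$, which the parity observation promotes to $\geq 2$---precisely the slack needed.

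For $X = \ch$ (and symmetrically $X = \h$) each stack loses its topmost $\C$, with the run of $\H$-edges above it sliding by $(+1,-1)$. Conveniently, the tip of each stack shifts by the common vector $(+1,-1)$ regardless of where the topmost $\C$ sits in the stack, so the new tips are automatically ordered. The subtlety is that the topmost $\C$ of $S$ may sit at a height $b_S$ strictly less than the topmost $\C$ of $\tilde S$ at $b_{\tilde S}$, so only $S$'s path has been modified in the interval $(b_S, b_{\tilde S}]$. I would handle this using the structural observation that above $b_S$ (resp.\ $b_{\tilde S}$) the relevant stack consists entirely of $\H$'s, so each modified path is a straight up-right segment attached at a known base point; combined with the original ordering at heights $\leq b_S$ and the parity upgrade, any purported violation of $S' \leq \tilde S'$ at a height in the modified range back-propagates to a violation of $S \leq \tilde S$ at a nearby lower height, contradicting the hypothesis.

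The main obstacle is this last point in the analysis of $X \in \{\ch, \h\}$: unlike the cases $X \in \{\C, \H, \x\}$, the path modification is non-local in the stack and one must track what happens at heights potentially far below the tip. But this is a finite local case analysis once the parity observation is in hand, and once it is verified the induction step closes and the lemma follows.
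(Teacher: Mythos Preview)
Your proposal is correct and follows exactly the approach the paper takes: the paper's entire proof is the single sentence ``immediate from an inspection of the cases in Figure~\ref{stackchange},'' and your case-by-case check is precisely that inspection written out. Your parity observation and your identification of the one genuinely nontrivial subcase (the $\ch$/$\h$ move when the topmost matching burger sits at different depths in the two stacks) are the right details to supply; the interval you flag as problematic is off by one (it should be roughly $[b_S, b_{\tilde S}-1]$ rather than $(b_S, b_{\tilde S}]$), but the back-propagation argument you sketch there goes through once the indices are straightened out.
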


We can use this result to deduce the following:

\begin{lemma} \label{l.condexpbound}
Fix $N > 0$ and $S_0$ and let $X(1), X(2), \ldots$ be chosen from $\mu$.  Then $\mathbb E[\dis_N | X(j) :j \leq k ]\}$ is a martingale in $k$ with increments of magnitude at most $2$ (which obtains the value $\dis_N$ at $k=N$).
A similar result holds when one further conditions on the value of the sequence $\cou_n$ for $n \in \{1,2,\ldots,N\}$: namely, $\mathbb E[\dis_N | X(j): j \leq k, \cou_j: 0 \leq j \leq N]$ is a martingale in $k$ with increments of magnitude at most $2$.
\end{lemma}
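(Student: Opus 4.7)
Plan: The martingale property follows from the tower property of conditional expectation; the content is the bound $|M_{k+1}-M_k|\leq 2$. Setting $g(a) := \mathbb{E}[\dis_N \mid X(1),\ldots,X(k),X(k+1)=a]$, we have
\[
M_{k+1} - M_k \;=\; g(X(k+1)) - \mathbb{E}[g(X(k+1)) \mid X(1),\ldots,X(k)],
\]
whose magnitude is at most $\max_{a,b}|g(a)-g(b)|$. Coupling two copies of the process that share $X(j)$ for $j\neq k+1$ but have $X(k+1)=a$ versus $X(k+1)=b$, this reduces to establishing the almost-sure pointwise bound $|\dis_N^a - \dis_N^b|\leq 2$ for every pair $(a,b)$ and every common continuation $X(k+2),\ldots,X(N)$.

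For this bound I would use the identity $\dis_n = \dis(S_n) - \dis(S_0)$: each matched burger-order pair contributes zero to $\dis$, so $\sum_{i=1}^n\dis(Y(i))$ telescopes to the change in stack discrepancy. Hence $\Phi_n := \dis_n^a - \dis_n^b = \dis(S_n^a) - \dis(S_n^b)$ for $n\geq k+1$, and a direct check of the five pairs $(a,b)$ shows $\Phi_{k+1}\in\{-2,0,2\}$. For $n>k+1$, the increment $\Phi_n-\Phi_{n-1}$ vanishes unless $X(n)=\x$ and the tops of $S_{n-1}^a, S_{n-1}^b$ differ; in that case it equals $+2$ when (top$^a$, top$^b)=(\C,\H)$ and $-2$ when $(\H,\C)$. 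So $|\Phi_n|\leq 2$ is equivalent to the structural claim that the configuration (top$^a$, top$^b)=(\C,\H)$ is forbidden when $\Phi_{n-1}=+2$, and $(\H,\C)$ is forbidden when $\Phi_{n-1}=-2$.

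I would prove this structural claim by induction on $n$, using Lemma~\ref{l.monotonicity} for control. In the path representation, $S^a$ and $S^b$ admit monotonically ordered paths (after a vertical shift of one when $\cou(a)\neq\cou(b)$); this ordering, combined with the inductive bound $|\Phi_{n-1}|\leq 2$, forces the top configurations to align with the sign of $\Phi$. The main obstacle is the case analysis for $X(n+1)\in\{\ch,\h\}$, where the order may match the top of one stack but reach past the top of the other: monotonicity of the path representation is precisely what prevents the two stacks' divergent caps from growing in a way that would allow a forbidden top configuration.

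The second statement is proved by the same argument. The additional conditioning on $(\cou_j)_{j\in\Z}$ only restricts which pairs $(a,b)$ appear in the coupling (to pairs with $\cou(a)=\cou(b)$) and does not weaken the almost-sure bound $|\dis_N^a-\dis_N^b|\leq 2$, so the analogous martingale $M_k'$ likewise has increments of magnitude at most $2$.
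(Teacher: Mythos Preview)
Your reduction is the same as the paper's: the martingale increment bound follows once you show that changing a single $X(j)$ (with the rest of the sequence held fixed) alters $\dis_N$ by at most~$2$.  Where you diverge is in how that deterministic bound is established.

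The paper's argument is one step shorter.  It observes that after the change at time $j$, the modified stack $S_j$ is \emph{sandwiched} between the original $S_j$ and a horizontal translate of it by $2$ units (after a vertical shift of one unit when $\cou(a)\neq\cou(b)$, exactly as you note).  Lemma~\ref{l.monotonicity} then propagates both inequalities of this sandwich to every later time, so the tips---and hence $\dis_n$---stay within $2$ units for all $n\geq j$, and in particular for $n=N$.  No induction, no inspection of stack tops, no case analysis on $X(n)$.

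Your plan instead tracks $\Phi_n=\dis^a_n-\dis^b_n$ step by step, isolates the only dangerous move ($X(n)=\x$ with mismatched tops), and proposes to rule out the bad top configurations inductively, invoking monotonicity ``for control''.  This can certainly be completed: the forbidden configuration $(\mathrm{top}^a,\mathrm{top}^b)=(\C,\H)$ at $\Phi_{n-1}=+2$ is exactly what the sandwich inequality rules out at height $\cou_{n-1}-1$.  But notice that once you have appealed to the monotone path ordering, the inductive hypothesis $|\Phi_{n-1}|\leq 2$ is already a consequence of it, not an independent ingredient---so the induction is doing no work.  Your route is correct but circuitous; the paper's direct application of the sandwiching is what your argument would collapse to once the redundancy is removed.

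For the second statement you are right that nothing changes: the deterministic bound $|\dis^a_N-\dis^b_N|\leq 2$ is pointwise over the common continuation, so it survives any further conditioning, including on $(\cou_j)_j$.
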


The lemma implies, in particular, that if we {\em fix} any choice of $S_0$ and fix $N > 0$ and then sample the sequence $X(1), X(2), \ldots$ chosen from $\mu$, the variance of $\dis_N$ is at most $4N$.  (The conditional variance of $\cou_N$ is exactly $N$.)

\begin{proof}
We claim that changing the value $X(j)$ for a single $j \in \{1,2,\ldots,N \}$ (while leaving the other values fixed) always changes $\dis_N$ by at most $2$.  Both statements in the lemma are immediate consequences of this.  We will establish the claim using Lemma \ref{l.monotonicity}.

First, if we change $X(j)$ in a way that does not affect $\cou_j$ (i.e., we change $X(j)$ from one burger type to another, or from one order type to another) then it is clear from Figure \ref{stackchange} that the modified $S_j$ obtained lies between the original $S_j$ and the original translated by $2$ units (either left or right).  It follows from Lemma \ref{l.monotonicity} that the change to $X(j)$ alters the tip of the final stack $S_N$ by at most $2$ units.  If we change $X(j)$ in a way that {\em does} affect $\cou_j$ then a variation of this argument still applies.  Suppose the modified $S_j$ has a tip two units higher vertically than the original.  Then if we translate the modified $S_j$ {\em down} by two units, it will lie within two units to the left or right of the original $S_j$ (again, by inspection of Figure \ref{stackchange}), and one can then apply Lemma \ref{l.monotonicity} as before to show that this remains true after both stacks evolve for $N-j$ steps using the same $X(j+1) \ldots X(n)$ sequence.
\end{proof}


The following is a fairly simple and standard observation about the tail behavior of martingales with small increments.  It follows e.g.\ from Lemma 2 of \cite{azuma}. (We include a short proof here for completeness, but this can be skipped.)

\begin{lemma} \label{l.martingalebound}
Let $\beta_j$ be a martingale in $j$ whose increments have magnitude at most $1$, with $\beta_0 = 0$.  Then for each real $a>0$ and integer $n > 0$ we have $$P \bigl(\max_{j \in \{1,2,\ldots,n \}}n^{-1/2} \beta_j \geq a \bigr) \leq e^{-a+1/2}.$$
\end{lemma}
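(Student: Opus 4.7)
The plan is to apply Doob's maximal inequality to the standard exponential supermartingale built from $\beta_j$. Concretely, for a parameter $\lambda > 0$ to be chosen, define
\[
M_j := \exp\!\bigl(\lambda \beta_j - j\lambda^2/2\bigr), \qquad M_0 = 1.
\]
My first step is to verify that $M_j$ is a non-negative supermartingale. Writing $D_j := \beta_j - \beta_{j-1}$, the hypotheses give $|D_j| \le 1$ and $\mathbb{E}[D_j \mid \mathcal{F}_{j-1}] = 0$. By convexity of $x \mapsto e^{\lambda x}$ on $[-1,1]$, one has $e^{\lambda x} \le \tfrac{1-x}{2} e^{-\lambda} + \tfrac{1+x}{2} e^{\lambda}$, so taking conditional expectations yields $\mathbb{E}[e^{\lambda D_j} \mid \mathcal{F}_{j-1}] \le \cosh(\lambda) \le e^{\lambda^2/2}$. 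This gives $\mathbb{E}[M_j \mid \mathcal{F}_{j-1}] \le M_{j-1}$, confirming the supermartingale property.

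Next I apply Doob's maximal inequality for non-negative supermartingales, $\P\bigl(\max_{j \le n} M_j \ge c\bigr) \le \mathbb{E}[M_0]/c = 1/c$. On the event $\max_{j \le n} \beta_j \ge a\sqrt{n}$, pick any $j \le n$ attaining $\beta_j \ge a\sqrt{n}$; then
\[
M_j = \exp\!\bigl(\lambda \beta_j - j\lambda^2/2\bigr) \ge \exp\!\bigl(\lambda a\sqrt{n} - n\lambda^2/2\bigr),
\]
using $j \le n$ and $\lambda^2/2 > 0$. Combining these two observations,
\[
\P\!\Bigl(\max_{j \in \{1,\ldots,n\}} \beta_j \ge a\sqrt{n}\Bigr) \le \exp\!\bigl(-\lambda a\sqrt{n} + n\lambda^2/2\bigr).
\]

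The final step is to choose $\lambda$. Setting $\lambda = 1/\sqrt{n}$ makes the exponent equal to $-a + 1/2$, which matches the claimed bound. (Optimizing in $\lambda$ would give the sharper Azuma--Hoeffding estimate $e^{-a^2/2}$, but the weaker form is what is stated and is already enough for the applications in the paper.) There is no real obstacle here: the only minor subtlety is recognizing that the $-j\lambda^2/2$ term is favorable and bounded below by $-n\lambda^2/2$ uniformly in $j \le n$, so that the event $\{\max \beta_j \ge a\sqrt n\}$ is contained in $\{\max M_j \ge e^{\lambda a \sqrt n - n\lambda^2/2}\}$ to which Doob's inequality can be applied directly.
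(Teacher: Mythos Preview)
Your proof is correct and essentially identical to the paper's. Both construct the exponential supermartingale $\exp(\lambda\beta_j - j\lambda^2/2)$, establish the supermartingale property via $\cosh\lambda \le e^{\lambda^2/2}$ together with convexity/Jensen for the bounded increments, and then choose $\lambda = n^{-1/2}$; the only cosmetic difference is that you invoke Doob's maximal inequality while the paper phrases the same step via the optional stopping theorem.
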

\begin{proof}
First, we observe that $\frac{e^b + e^{-b}}{2} \leq e^{b^2/2}$ for all $b$.  (Indeed, by Taylor expansion, the left side is $1 + b^2/2! + b^4/4! + b^6/6! + \ldots$ while the right side is $1 + b^2/(2^1 1!) + b^4/(2^2 2!) + b^6/(2^3 3!) + \ldots$.)  Thus, if the increments of $\beta_j$ were exactly $\pm1$ (each with probability $1/2$), with $\beta_0 = 1$, then we would have $\mathbb E[e^{b \beta_1}] \leq e^{b^2/2}$ and $\mathbb E[e^{b \beta_1 - b^2/2}] \leq 1$.  We claim that the same is true if we allow for $\beta_1 \not \in \{-1,1 \}$ and insist only that $|\beta_1| \leq 1$ a.s.\ and $\mathbb E \beta_1 = 0$.  (Indeed, one may first choose an increment $\beta_1$, and then, given this, choose a ``modified increment'' $\beta_1' \in \{-1,1\}$ using a biased coin whose probability is chosen so that conditional expectation of the modified increment is $\beta_1$.  Jensen's inequality implies $\mathbb E[e^{b \beta_1}] \leq \mathbb E[e^{b \beta_1'}]$.)  The argument above shows more generally that $e^{b \beta_j - j b^2/2}$ is a supermartingale indexed by $j$.

Taking $b = n^{-1/2}$ produces a supermartingale in $j$ whose value at time $j=0$ is $1$ and whose value at general $j$ time is $e^{n^{-1/2} \beta_j - \frac{j}{2n}}$.  The expectation of the latter quantity is at most $1$, by the optional stopping theorem, so the probability that it ever reaches $e^{a-1/2}$ is at most $e^{-a+1/2}$.  To conclude, we note that $n^{-1/2} \beta_j \geq a$ implies $e^{n^{-1/2} \beta_j - \frac{j}{2n}} \geq e^{a-1/2}$ for $j \in \{1,2,\ldots,n \}$.
\end{proof}

\begin{lemma} \label{l.translateprob}
Fix any $p \in [0,1]$.  Fix the time zero stack $S_0$.  Then there are positive constants $C_1$ and $C_2$ such that, for any choice of $S_0$, $a>0$, and $n \geq 1$, the probability that $|\dis_j|>a \sqrt n$ for some $j \in \{0,1,2,\ldots,n \}$ is at most $C_1 e^{- C_2 a}$.
\end{lemma}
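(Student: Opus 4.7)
My plan is to apply the martingale tail bound of Lemma~\ref{l.martingalebound} to a Doob martingale built from $\dis_n$, and then to lift the resulting endpoint bound to all $j \in \{0,1,\dots,n\}$, using Lemma~\ref{l.monotonicity} to reduce to extremal starting stacks. Since the statement is translation invariant in the tip, one may assume without loss of generality that $\dis_0 = \cou_0 = 0$.

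The first step is to invoke Lemma~\ref{l.condexpbound} with $N = n$: the process $M_k := \E[\dis_n \mid X(j) : j \leq k]$, for $k \in \{0,1,\dots,n\}$, is a martingale with increments of magnitude at most $2$ and $M_n = \dis_n$. Applying Lemma~\ref{l.martingalebound} to the rescaled martingale $(M_k - M_0)/2$ gives
\[
\Prob\Bigl(\max_{0 \leq k \leq n} |M_k - M_0| > 2a\sqrt n\Bigr) \leq 2e^{-a+1/2}.
\]
Since $\dis_j$ is $\mathcal F_j$-measurable, one has the identity $\dis_j = M_j - \E[\dis_n - \dis_j \mid \mathcal F_j]$, so
\[
|\dis_j| \leq |M_j - M_0| + |M_0| + \bigl|\E[\dis_n - \dis_j \mid \mathcal F_j]\bigr|
\]
for every $j \leq n$; taking the maximum over $j$ produces the same bound with $|M_j - M_0|$ replaced by $\max_k |M_k - M_0|$ on the right.

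The remaining task is the uniform mean-drift estimate $|\E_S[\dis_m]| \leq C\sqrt m$, valid for every starting stack $S$ and every $m \geq 0$; applied with $(S, m) = (S_0, n)$ this bounds $|M_0|$, and with $(S, m) = (S_j, n-j)$ it bounds the conditional expectation, each by $O(\sqrt n)$. By Lemma~\ref{l.monotonicity}, $\dis_m(S)$ is monotone in $S$ under the $\leq$ relation (a coupled pair of evolutions shares the same $\cou$ trajectory), and the extremal cases occur at the all-$\H$ and all-$\C$ stacks, which are exchanged by the $\H \leftrightarrow \C$ symmetry. For $S$ equal to the all-$\H$ stack, the decomposition $\dis_m = U_m + V_m + W_m$ is useful, where $U_m$ is the number of $\H$ symbols in $X(1), \dots, X(m)$ minus the number of $\C$ symbols, $V_m$ is the analogous difference between $\ch$ and $\h$ symbols, and $W_m$ records the signed contributions from $\x$-consumed burgers. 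Since $U_m$ and $V_m$ are mean-zero i.i.d.-sum martingales, $\E[\dis_m] = \E[W_m]$, and the drift of $W_m$ picks up a nonzero contribution only at steps $k$ with $X(k) = \x$ at which the burger stack above $S_0$ is empty (forcing the consumed burger to be an $\H$ from $S_0$). Since this above-$S_0$ stack height behaves like a reflected random walk, its expected local time at zero through $m$ steps is $O(\sqrt m)$, yielding $|\E[W_m]| = O(\sqrt m)$.

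The main obstacle is this last estimate, since the above-$S_0$ stack height is not literally a reflected simple random walk: its increments depend on whether an incoming order type matches the current top. I would handle this by coupling it to a dominating reflected simple random walk and using the classical $O(\sqrt m)$ bound on local time at zero. Granting the mean-drift estimate, $|\dis_j| \leq \max_k |M_k - M_0| + O(\sqrt n)$ uniformly in $j$, and combining with the martingale tail bound (changing variables $a \mapsto a - O(1)$ to absorb the additive $O(\sqrt n)$ into the exponent) gives constants $C_1, C_2$ as claimed.
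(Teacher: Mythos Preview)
Your strategy is reasonable in outline, and the first two paragraphs (using the Doob martingale from Lemma~\ref{l.condexpbound}, decomposing $|\dis_j|$ into the martingale oscillation plus a conditional mean) are fine. The real content, as you recognize, is the uniform mean-drift bound $|\E_S[\dis_m]| \leq C\sqrt m$, and your argument for this has a genuine gap.

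The problem is your claim that ``the drift of $W_m$ picks up a nonzero contribution only at steps $k$ with $X(k)=\x$ at which the burger stack above $S_0$ is empty.'' This would require that when the top burger comes from $\{X(1),\dots,X(k-1)\}$ rather than $S_0$, it is (conditionally) equally likely to be $\H$ or $\C$. But that is not true: whether a burger added at time $j$ survives to be on top at time $k-1$ depends on its type, since intervening $\h$ and $\ch$ orders consume type-specifically. With $S_0$ heavily $\H$-biased, $\ch$ orders are forced to consume $\C$'s from the ``above-$S_0$'' part while $\h$ orders can dip into $S_0$, so the surviving above-$S_0$ burgers are themselves biased. Thus $\E[W_k - W_{k-1}]$ can be nonzero even when the top burger is not from $S_0$, and your local-time-at-zero heuristic does not capture the full drift. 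A secondary issue is that the all-$\H$ stack does not give well-defined dynamics (an incoming $\ch$ has no burger to consume), so your extremal stacks need to be regularized, and once you do so the clean ``above/below $S_0$'' decomposition blurs further because $\ch$ orders can reach arbitrarily deep.

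The paper's proof circumvents the mean-drift estimate entirely. It conditions first on the $\cou$ trajectory, restricts to the event $\mathcal E_n$ that $|\cou_j|\le M:=a\sqrt n/8$ for all $j\le n$, and works with the conditional martingale $m_j=\E[\dis_n\mid X(i):i\le j,\ \cou_i:i\le n]$. The key step is a symmetry/Lipschitz argument: on $\mathcal E_n$, flipping the types of the top $M$ burgers in $S_0$ changes $\dis_j-m_j$ by at most $4M$ and also negates it (via the $\H\leftrightarrow\C$ symmetry), forcing $|\dis_j-m_j|\le 2M$ deterministically. Then the martingale bound on $m_j$ controls $\dis_j$ directly, with no need to estimate $\E_S[\dis_m]$ at all. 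If you want to repair your approach, you would need an honest proof of the uniform mean-drift bound; the paper's symmetry trick is one clean way to get a pointwise bound that makes this unnecessary.
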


\begin{proof}
Without loss of generality, assume $\cou_0 = \dis_0 = 0$.  To sample $X(1), X(2), \ldots$ we may first sample the sequence $\cou_1, \cou_2, \ldots$ and then conditioned on that choose the types of the burgers and orders (which in turn determine the $\dis_j$ sequence).  Write $M = a \sqrt{n} / 8$ and note that
\begin{equation}\label{e.dismaxbound} \mathbb P \bigl\{ \max_{j \in \{1,2,\ldots,n \}} |\cou_j| >  M \bigr \} \leq 2 e^{-a/8 +1/2}\end{equation} by Lemma \ref{l.martingalebound}.  Thus, we may restrict attention to the event $\mathcal E_n$ that this does not occur, in which case we are conditioning on values of $\cou_j$ that satisfy $|\cou_j| \leq M$ for $j \in \{1,2,\ldots,n\}$.

Using Lemma \ref{l.condexpbound} we have that
\begin{equation}\label{eqn::mj} m_j := \mathbb E[\dis_n | X(i) :i \leq j, \cou_i : 0 \leq i \leq n ]\end{equation} is a martingale in $j$ for $j \in \{0,1,\ldots,n\}$ with increments of size at most $2$. The value of $\dis_j$ depends on the stack at time $j$ and does not depend on any of the edges in $S_0$ that lie below $-M$.  This is because if one considers two {\em different} choices of $S_0$ which agree above level $-M$, and then takes the {\em same} choice of symbols $X(0), X(1), X(2),\ldots$ whose values imply $|\cou_j| \leq M$ for $j \in \{1,2,\ldots,n\}$, then by induction the updated stack $S_j$ will look the same above level $-M$ for each $j$ between $0$ and $n$, and since all $\x$ symbols get matched to burgers above level $-M$, we may conclude that (on the event $\mathcal E_n$) $\dis_j$ does not depend on the choice of $S_0$ below level $-M$.  We can also rewrite \eqref{eqn::mj} as $m_j = \mathbb E [\dis_n | S_j , \cou_i: 0 \leq i \leq n]$.

Now {\em fix} some value $j \in \{1,2,\ldots,n\}$ and let $\tilde S_j$ be the stack obtained by swapping the burgers in $S_j$ above level $-M$ {\em and} subsequently translating the stack by $4M$ units to the left (i.e., translating the stack by the amount that ensures that the corresponding $\dis_j$ value is translated by $4M$, but $\cou_j$ remains the same). Then we see that $\tilde S_j$ lies strictly to the left of $S_j$. One can then see inductively (by Lemma \ref{l.monotonicity}) that this will continue to be the case if we evolve both stacks in parallel by adding the same symbols $X_{j+1}, X_{j+2}, \ldots$ to each. Let $\tilde \dis_j$ be the discrepancy of $\tilde S_j$, so that (by definition) we have $\tilde \dis_j = \dis_j - 4M$. Let $\tilde \dis_i$ for $j \leq i \leq n$ be obtained by letting the stack
evolve by the addition of the symbols $X_j+1, X_j+2, \ldots, X_n$.  Set $\tilde m_j = \E[\tilde \dis_n
| \tilde S_j ; \cou_i
, 0 \leq i \leq n)],$
which is also $\mathbb E[\tilde \dis_n| S_j,  \cou_i: 0 \leq i \leq n)]$.  Now observe by symmetry that on the event $\mathcal E_n$ we have \begin{equation} \label{eqn::disjmj} \tilde \dis_j - \tilde m_j = -(D_j - m_j). \end{equation} By the fact (noted above) that $\tilde S_i$ is left of $S_i$ for $j \leq i \leq n$ one has $\tilde \dis_n \leq \dis_n$ and therefore $\tilde m_j \leq m_j$, and we also recall that $\tilde \dis_j = \dis_j - 4M$. Rewriting the LHS of \eqref{eqn::disjmj} using these two facts, we get
$$\dis_j - 4M -m_j \leq -\dis_j + m_j,$$
which implies $\dis_j - m_j \leq 2M$ on $\mathcal E_n$.

An analogous statement holds if we define $\tilde S_j$ using translation to the right instead of to the left. Thus, on the event $\mathcal E_n$, we have $|\dis_j - m_j| \leq 2M = \frac{a}{4} \sqrt{n} $  for all $j \in \{0,1,\ldots,n\}$. Thus, in order to have $|\dis_j - \dis_0| \geq a \sqrt{n}$ for some $j \in \{1,2,\ldots,n\}$ we must have $|m_j-m_0| \geq a \sqrt{n}/2$.  Applying Lemma \ref{l.martingalebound} to $m_j/2$, we find that the conditional probability (given $\mathcal E_n$) that $|m_j - m_0|$ exceeds $\sqrt{n} a/2$ on the interval $\{1,2,\ldots,n\}$ is at most $e^{-a/4 + 1/2}$.  This combined with \eqref{e.dismaxbound} gives the lemma.
\end{proof}

\begin{lemma} \label{l.wordlength}
The length of $X(1,n)$ is typically of order $\sqrt{n}$ or smaller regardless of $p$.  More precisely, there are positive constants $C_1$ and $C_2$ (independent of $n$ and $a>0$) such that
$$P(\sup_{i\in \{1,2, \ldots, n\}} |X(1,i)| > a \sqrt n) \leq C_1 e^{-C_2 n}.$$\end{lemma}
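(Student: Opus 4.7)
The plan is to route the bound through $Y(1,n)$, which is structurally simpler than $X(1,n)$ because it contains no $\x$ symbols. The first step is the observation made just after Definition \ref{andef}: $Y(1,n)$ differs from $X(1,n)$ only by relabeling each $\x$ as the $\ch$ or $\h$ of the burger it actually consumed, so $|X(1,n)| = |Y(1,n)|$. Since $Y$ lives in the four-symbol alphabet $\{\C,\H,\ch,\h\}$, the two commutativity relations $\C\h = \h\C$ and $\H\ch = \ch\H$ allow every cheese-colored symbol ($\C$ or $\ch$) to be slid past every ham-colored symbol ($\H$ or $\h$), while the annihilations $\C\ch = \H\h = \emptyset$ act only within a single color. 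Hence the reduction of $Y(1)\ldots Y(n)$ splits as a disjoint product of an independent cheese sub-reduction and ham sub-reduction, giving $|Y(1,n)| = L^C_n + L^H_n$, where $L^C_n$ and $L^H_n$ are the lengths of the two reduced sub-words.

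Each sub-reduction is a pure two-symbol LIFO stack. Introducing the partial sums $\cou^C_j = \#\C - \#\ch$ and $\cou^H_j = \#\H - \#\h$ over $Y(1),\ldots,Y(j)$, a direct stack bookkeeping argument shows that the number of unmatched $\ch$'s in the cheese sub-word equals $\max\bigl(0,\,-\min_{0 \leq j \leq n}\cou^C_j\bigr)$, and hence
$$L^C_n \;=\; \cou^C_n + 2\max\bigl(0,\,-\min_{0 \leq j \leq n}\cou^C_j\bigr) \;\leq\; 3\max_{0 \leq j \leq n}|\cou^C_j|,$$
with the same inequality for $L^H_n$. Using the identities $\cou^C_j + \cou^H_j = \cou_j$ and $\cou^H_j - \cou^C_j = \dis_j$ to bound each color walk by $(|\cou_j| + |\dis_j|)/2$ yields
$$|X(1,n)| \;\leq\; 3\bigl(\max_{0 \leq j \leq n}|\cou_j| + \max_{0 \leq j \leq n}|\dis_j|\bigr).$$

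The conclusion then follows by a union bound. Since $\cou_j$ is a simple $\pm 1$ random walk, Lemma \ref{l.martingalebound} applied to $\pm \cou_j$ supplies an exponential tail for $\max_{j \leq n}|\cou_j|/\sqrt n$, and Lemma \ref{l.translateprob} supplies the same for $\max_{j \leq n}|\dis_j|/\sqrt n$. Combining the two (rescaling constants to absorb the factor of $3$) produces constants $C_1, C_2>0$ independent of $n$ and $a$ with $P(|X(1,n)| > a\sqrt n) \leq C_1 e^{-C_2 a}$. The only delicate point in the argument is the color-splitting of $Y$'s reduction in the first paragraph: one must check carefully that applying the commutativity relations first, to push all cheese symbols past all ham symbols, and then running the annihilations within each color, genuinely produces the unique reduced word $Y(1,n)$. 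Once that is in hand, the rest is elementary bookkeeping plus the two tail inequalities already established.
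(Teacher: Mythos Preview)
Your argument is correct and takes a genuinely different route from the paper's. The paper's proof couples $X(1),\ldots,X(n)$ to a specific initial stack $S_0$ chosen to alternate $\C\H\C\H\cdots$; with that choice, the depth to which $S_0$ is eaten into is controlled directly by $\max|\cou_j|$ and $\max|\dis_j|$ (a deep bite would leave a long same-type run just above it, forcing a large $|\dis|$ or $|\cou|$), and this bounds the number of orders in $X(1,n)$. Your approach instead exploits the fact that in the $\x$-free word $Y$ the matching is color-preserving, so the unmatched positions split exactly into unmatched cheese positions and unmatched ham positions; each color then becomes a standard one-stack reduction governed by the single-color walk $(\cou_j\pm\dis_j)/2$. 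Both routes feed into the same tail inequalities (Lemmas~\ref{l.martingalebound} and~\ref{l.translateprob}); yours is more algebraic and avoids the device of a fictitious initial stack, while the paper's is tied to the geometric stack picture of Figure~\ref{stackchange}.

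One small correction to your ``delicate point'': the claim that commutativity lets ``every cheese-colored symbol be slid past every ham-colored symbol'' is not literally true in $\mathcal G$---the pairs $\C\H$ and $\H\C$ (likewise $\ch\h$ and $\h\ch$) represent distinct semigroup elements, so you cannot fully separate the colors by sliding alone. The identity $|Y(1,n)| = L^C_n + L^H_n$ is nonetheless correct, and the clean justification is the one you implicitly use later: since a $\ch$ can only annihilate a $\C$ and an $\h$ only an $\H$, the positions left unmatched in the full reduction of $Y(1)\cdots Y(n)$ are exactly the disjoint union of those left unmatched in the two single-color subword reductions. That observation (rather than commutativity) is what makes the length split.
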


\begin{proof}
Fix an initial stack $S_0$ to be alternating between $\C$ and $\H$.  Then observe that {\em if} $\dis_k$ and $\cou_k$ both fluctuate by at most $a \sqrt{n}/5$, as $k$ ranges from $1$ to $n$, then no burger on the stack $S_0$ strictly below height $-2 a \sqrt{n}/5$ will have been consumed during the first $n$ steps.  (If the first burger below that height to be consumed were consumed at step $j$ then all of the burgers above it in the stack at step $j$ --- let $k$ be the number of such burgers --- would have to be of the same type. This would require that either $k < a \sqrt{n}/5$, in which case $|\cou_j|$ would have to exceed the assumed bound, or that $k \geq a \sqrt{n}/5$, in which case $|\dis_j|$ would have to exceed the assumed bound, since all the burgers below the top $k$ must strictly alternate between $\C$ and $\H$.)  Thus, on this event the total number of orders in $X(1,i)$ is less than $2a \sqrt{n}/5$ for each $i \in \{1,2,\ldots,n\}$.  Since $\cou_k$ fluctuates by at most $a \sqrt{n}/5$ this also implies that the total number of burgers in $X(1,i)$ is at most $3 a \sqrt{n}/5$  for each $i \in\{1,2,\ldots,n\}$, and hence $|X(1,i)| < a \sqrt{n}$ for each $i \in\{1,2,\ldots,n\}$.  The result thus follows from Lemma \ref{l.translateprob} and the analogous bound that applies when $\dis$ is replaced by $\cou$.
\end{proof}

\subsection{Proof of Lemma \ref{l.chigives}} \label{ss.phase}

Lemma \ref{l.chigives} should not seem very surprising in light of the results we have established so far.   Lemma \ref{l.expEfinite} and Lemma \ref{l.stackfraction} show that when the Lemma \ref{l.chigives} hypothesis holds (i.e., $\chi \not = 2$) the stack $X(-\infty,0)$, embedded in the manner of Figure \ref{stackchange}, scales to a vertical line a.s.\ as one zooms out.  It is thus natural to expect that the left-right fluctuation of the time-evolution of the stack is small compared to the up-down fluctuation.  We divide Lemma \ref{l.chigives} into Lemmas \ref{l.chinot2zerovariance} and \ref{l.distightness}, which we state and prove below.

\begin{lemma} \label{l.chinot2zerovariance}
If $\chi \not = 2$ then $\Var[\dis(n)] = o(n)$.
\end{lemma}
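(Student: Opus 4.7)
The plan is to prove $\Var[\dis_n]=o(n)$ by establishing $\dis_n/\sqrt{n}\to 0$ in probability and combining with uniform integrability. Lemma~\ref{l.translateprob} provides exponential tails for $|\dis_n|/\sqrt{n}$ uniform in $n$, so $\{(\dis_n/\sqrt{n})^2\}_{n\ge 1}$ is uniformly integrable and convergence in probability is enough. Since $\chi\in[1,2]$, the hypothesis $\chi\ne 2$ forces $\chi<2$, so Lemma~\ref{l.expEfinite} (via Lemma~\ref{l.finiteexpectationequivalence}) yields $\E[|E|]<\infty$. Lemma~\ref{l.stackfraction} then says that the discrepancy of the top $N$ burgers of $X(-\infty,0)$ is $o(N)$ almost surely as $N\to\infty$, and by translation-invariance of the input the same conclusion holds for $X(-\infty,n)$.

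The geometric intuition, highlighted in the preamble to the lemma, is that both stacks look like vertical lines on the $\sqrt{n}$ scale and agree on a common deep portion, so their tips must be close. Fix a large constant $K>0$. My first step is to show that, with probability tending to $1$ as $K\to\infty$ uniformly in $n$, no consumption during $\{1,\ldots,n\}$ removes any burger from $X(-\infty,0)$ at original level below $-K\sqrt{n}$. The ``count-depth'' $-\min_{0\le j\le n}\cou_j$ is $O(\sqrt n)$ w.h.p.\ by Lemma~\ref{l.translateprob}, and the extra ``skip depth'' incurred when a $\ch$ or $\h$ order passes over a run of opposite-type burgers is controlled by the longest monochromatic run appearing at the top of the stack during $\{0,\ldots,n\}$, which under the balanced-composition hypothesis I expect to be $o(\sqrt n)$ w.h.p.

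On this event, let $B$ be the burger originally at level $-K\sqrt{n}$ in $X(-\infty,0)$; it remains present, and sits below every burger consumed or added during the interval, in both stacks. Walking from the time-$0$ tip down $K\sqrt{n}$ burgers to $B$ and then from $B$ up to the time-$n$ tip along $K\sqrt{n}+\cou_n$ burgers of $X(-\infty,n)$ gives the identity
\[
\dis_n \;=\; D_n \;-\; D_0,
\]
where $D_0$ is the discrepancy of the top $K\sqrt{n}$ burgers of $X(-\infty,0)$ and $D_n$ is the discrepancy of the $K\sqrt{n}+\cou_n$ burgers of $X(-\infty,n)$ lying above $B$. Lemma~\ref{l.stackfraction} applied to $X(-\infty,0)$ gives $|D_0|/(K\sqrt n)\to 0$ a.s., hence $|D_0|/\sqrt n=K\cdot|D_0|/(K\sqrt n)\to 0$ a.s.\ for each fixed $K$. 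Applied to $X(-\infty,n)$, together with tightness of $\cou_n/\sqrt n$, it gives $|D_n|/\sqrt n\to 0$ in probability. Therefore $|\dis_n|/\sqrt n\le|D_0|/\sqrt n+|D_n|/\sqrt n\to 0$ in probability on this event, and since the event has probability arbitrarily close to $1$, we conclude $\dis_n/\sqrt n\to 0$ in probability. Uniform integrability then gives $\Var[\dis_n]=o(n)$.

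The main obstacle is the skip-depth estimate in Step~1: Lemma~\ref{l.stackfraction} gives only an $o(N)$ discrepancy bound, which is weaker than the $O(\sqrt N)$ a CLT would yield, so controlling the longest monochromatic run requires revisiting the renewal structure from its proof. Since $X(-\infty,0)$ is the concatenation of i.i.d.\ all-burger words $X(-O_m,-O_{m-1}-1)$ of finite expected length, a long same-type run must lie inside a single such word (where the run length inherits the word's finite-expectation law) or stretch across several consecutive words (unlikely because each such word has symmetric-signed discrepancy). A union bound over the $n$ candidate times and the $O(\sqrt n)$ candidate top positions should then give the $o(\sqrt n)$ bound on skip depth required for Step~1.
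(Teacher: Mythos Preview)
Your outline is the same as the paper's: prove $n^{-1/2}\dis_n\to 0$ in probability and then upgrade to $\Var[\dis_n]=o(n)$ via the uniform tail bound of Lemma~\ref{l.translateprob}. Your decomposition $\dis_n=D_n-D_0$ and the use of Lemma~\ref{l.stackfraction} to kill each piece is exactly the geometric argument the paper sketches (``the two stacks agree except for the top $a\sqrt{n}$ burgers'').

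Where you go astray is in Step~1. The monochromatic-run analysis you propose is neither needed nor, as written, correct: the skip depth is not bounded by a single longest run (it can accumulate across many orders), and your union-bound sketch over ``$O(\sqrt n)$ positions'' is circular, since that is what you are trying to prove. The depth bound follows directly from Lemmas~\ref{l.wordlength} and~\ref{l.stackfraction} with no extra work. Here is the missing observation. Let $d_n$ be the original depth in $X(-\infty,0)$ of the deepest burger consumed during $\{1,\dots,n\}$, and suppose it is an $\H$ consumed at time $t$ by an $\h$ order (the $\ch$ case is symmetric; if it is an $\x$ order then $d_n$ is trivially at most one plus the number of orders in $X(1,n)$). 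At time $t-1$ every burger above it in the stack is a $\C$; since newly added burgers always sit above all remaining original burgers (burgers in $X(-\infty,t-1)$ are ordered by their time index), this means every \emph{unconsumed} original burger at depths $1,\dots,d_n-1$ is a $\C$. Hence the number of $\H$'s among the top $d_n$ of $X(-\infty,0)$ is at most one plus the number of original burgers consumed so far, which is at most one plus the number of orders in $X(1,n)$, hence at most $a\sqrt n+1$ on the high-probability event of Lemma~\ref{l.wordlength}. But Lemma~\ref{l.stackfraction} gives an a.s.\ finite $N_0$ such that for $N\ge N_0$ the top $N$ of $X(-\infty,0)$ contain at least $N/3$ hamburgers. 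Combining, $d_n\le\max\{N_0,\,3(a\sqrt n+1)\}$, so $\mathbb{P}(d_n>3a\sqrt n)\to 0$ for each fixed $a$, and can be made uniformly small by taking $a$ large. This is exactly your Step~1, obtained without any run-length input.

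With this in hand, your $D_0$ and $D_n$ bounds go through as you wrote them, and the rest of your argument is fine.
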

\begin{proof}
First, we claim that if $\chi \not = 2$ then $n^{-1/2} \dis_n$ tends to zero in probability.  To see this, recall that the stacks $X(-\infty,0)$ and $X(-\infty, n)$ agree in law, and that the collection of the top $a \sqrt{n}$ burgers (for any fixed $a$) is likely to contain a roughly even distribution of hamburgers and cheeseburgers, in the sense of Lemma \ref{l.stackfraction}.  By Lemma \ref{l.wordlength}, the probability that the two stacks agree except for the top $a \sqrt{n}$ burgers (i.e., that one can find $j,k \leq a \sqrt{n}$ such that $X(-\infty,0)$ with the top $j$ burgers removed is the same as $X(-\infty,n)$ with the top $k$ burgers removed) is a quantity that remains bounded below as $n$ tends to infinity, and the bound can be made arbitrarily close to $1$ by making $a$ large enough.  It follows from this that the random variables $n^{-1/2}\dis_n$ tend to zero in probability.  We still need to show that the variances $\mathbb E[n^{-1} \dis_n^2]$ tend to zero.  This follows from the fact that the random variables $n^{-1} \dis_n^2$ tend to zero in probability, together with the uniform bounds on their tails given by Lemma \ref{l.translateprob}.
\end{proof}

\begin{lemma} \label{l.zerovariancemeansmaximumdnsmall}
If $\Var[\dis(n)] = o(n)$ then $n^{-1/2} \max \{|\dis_j| : 1 \leq j < nt \rfloor \}$ converges to zero in probability for each fixed $t > 0$.
\end{lemma}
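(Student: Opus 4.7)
The plan is to combine a Chebyshev bound at a carefully chosen grid of times with the exponential tail bound of Lemma \ref{l.translateprob} for the fluctuations in between. The key enabling fact is that, by translation invariance of $\mu$, the process $\dis_n$ has stationary increments: $\dis_n - \dis_m$ has the same law as $\dis_{n-m}$ for $m \le n$, and the within-window maximum of the increment process has the law of $\max_{0 \le j \le n-m} |\dis_j|$.

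First I would promote the pointwise hypothesis $\Var[\dis_j] = o(j)$ to the uniform bound $\max_{0 \le j \le \lfloor nt \rfloor} \Var[\dis_j] \le n\,\tilde\varepsilon(n)$ with $\tilde\varepsilon(n) \downarrow 0$; this is routine by splitting the range at a cutoff $J$ so that contributions from $j < J$ are bounded by a constant (depending on $J$ only) while those from $J \le j \le \lfloor nt \rfloor$ are controlled by the sublinear growth. Next, fix $\eta > 0$ and choose a mesoscopic block size $m = m(n) = \lfloor n \sqrt{\tilde\varepsilon(n)} \rfloor$, so that $m/n \to 0$ while $n/m = \tilde\varepsilon(n)^{-1/2} \to \infty$. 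Partition $\{0,1,\ldots,\lfloor nt\rfloor\}$ into $K \le 2tn/m$ blocks with endpoints $t_k = km$, so that by the triangle inequality $|\dis_j| \le |\dis_{t_{k-1}}| + |\dis_j - \dis_{t_{k-1}}|$ on the $k$-th block, and it suffices to control the two maxima separately by $\eta\sqrt{n}/2$.

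For the endpoints, Chebyshev gives $\P(|\dis_{t_k}| > \eta\sqrt{n}/2) \le 4\,\tilde\varepsilon(n)/\eta^2$, and a union bound over the $K$ endpoints yields a total at most $(2tn/m)\cdot 4\tilde\varepsilon(n)/\eta^2 = 8t\sqrt{\tilde\varepsilon(n)}/\eta^2 \to 0$. For the within-block fluctuations, conditioning on the stack at time $t_{k-1}$, the Markov property identifies the conditional law of $(\dis_{t_{k-1}+j}-\dis_{t_{k-1}})_{0 \le j \le m}$ with the law of $(\dis_j)_{0 \le j \le m}$ started from the conditioned stack, and since the bound in Lemma \ref{l.translateprob} is uniform in the initial stack, applying it with window $m$ and $a = (\eta/2)\sqrt{n/m}$ shows that the within-block maximum exceeds $\eta\sqrt{n}/2$ with probability at most $C_1 e^{-C_2 \eta\sqrt{n/m}/2}$. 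A union bound over the $K$ blocks gives at most $(2tn/m)\, C_1 e^{-C_2 \eta\sqrt{n/m}/2}$, which tends to zero since the exponential in $\sqrt{n/m}\to\infty$ beats the polynomial prefactor $n/m$.

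The only real obstacle is the simultaneous balancing of the two error contributions through the choice of $m$, together with the initial observation that, thanks to translation invariance of $\mu$, the increments of $\dis_n$ are genuinely stationary despite the global definition of the matching $\phi$ that enters the symbols $Y(k)$. Once these are in hand, everything else is a routine combination of Chebyshev, the Markov property, and Lemma \ref{l.translateprob}.
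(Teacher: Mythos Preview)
Your proof is correct and follows essentially the same strategy as the paper: control $\dis$ at a grid of times via Chebyshev (using the $o(n)$ variance hypothesis) and control the fluctuations between grid points via the uniform-in-$S_0$ exponential bound of Lemma~\ref{l.translateprob}, then take a union bound. The only cosmetic difference is that the paper uses a \emph{fixed} mesh $\delta$ in rescaled time (so the number of grid points is fixed and one simply lets $n\to\infty$ first, then $\delta\to 0$), whereas you let the mesh $m(n)=\lfloor n\sqrt{\tilde\varepsilon(n)}\rfloor$ depend on $n$ and do both limits simultaneously; your choice of $m(n)$ is exactly what is needed to make the union bound over the growing number of grid points still vanish.
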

\begin{proof}
The variance assumption immediately implies that $n^{-1/2} \dis_{\lfloor nt \rfloor }$ converges to zero in probability for each fixed $t$.  It also implies that the joint law of $n^{-1/2} \dis_{\lfloor nt \rfloor }$ at any fixed collection of $t$ values tends to zero in probability.

Now, suppose that we divide the first $t$ units of time into equal increments of length $\delta$ (for some small $\delta$). Then on each such increment, we would guess that with high probability the fluctuation of $n^{-1/2} \dis_{\lfloor tn \rfloor}$, as $t$ ranges through an interval of length $\delta$, should be very small when $n$ is sufficiently large and $\delta$ is small and fixed.  Indeed, the bound in Lemma \ref{l.translateprob} (applied with $a = \delta^{1/6}$) implies that the probability that there is even a single interval on which the fluctuation magnitude is greater than $a \delta^{1/2} = \delta^{2/3}$ will remain bounded above by some constant as $n \to \infty$, and this constant can be made arbitrarily small by taking $\delta$ sufficiently small.  Since we can take $\delta$ as close to zero as we like, the lemma follows.
\end{proof}

\begin{lemma} \label{l.distightness}
If $\chi \not = 2$ and $\Var[\dis(n)] = o(n)$ then $$\lim_{n \to \infty} \mathbb E\bigl[ |\dis(-n,-1)| 1_{J > n }\bigr]=0.$$
\end{lemma}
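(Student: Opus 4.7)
The plan is to exploit the independence between the symbols in $[-n,-1]$ and those in $(-\infty,-n-1]$, together with the asymptotic balance of the past burger stack from Lemma \ref{l.stackfraction}, and to control the quantity $\mathbb E\bigl[L\,\mathbf 1_{\{J>n\}}\bigr]$ (where $L=|X(-n,-1)|$) via a single application of optional stopping to the martingale $j\mapsto \cou(-j,-1)$.

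First I would establish an identification on the event $\{J>n\}$: the reduced word $X(-n,-1)$ is then a pure sequence of orders of length $L=-\cou(-n,-1)$, and in the product $X(-\infty,-n-1)\cdot X(-n,-1)=X(-\infty,-1)$ these $L$ orders consume exactly the top $L$ burgers of $X(-\infty,-n-1)$. Writing $(a_L,b_L)$ for the $(\C,\H)$-composition of that top block and tracking how each $\x$ is replaced in the definition of $Y$, one checks that $Y(-n,-1)$ contains $a_L$ copies of $\ch$ and $b_L$ copies of $\h$, so
$$|\dis(-n,-1)| \;=\; |a_L-b_L| \qquad \text{on } \{J>n\}.$$
Because $(L,\mathbf 1_{\{J>n\}})$ is a function of $X(-n),\ldots,X(-1)$ while $(a_L,b_L)$ is a function of $L$ and the independent tail $X(-n-1),X(-n-2),\ldots$, stationarity yields
$$\mathbb E\bigl[|\dis(-n,-1)|\,\mathbf 1_{\{J>n\}}\bigr] \;=\; \mathbb E\bigl[\psi(L)\,\mathbf 1_{\{J>n\}}\bigr], \qquad \psi(\ell):=\mathbb E[|a_\ell-b_\ell|],$$
where now $(a_\ell,b_\ell)$ refers to the top-$\ell$ composition of $X(-\infty,0)$.

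Next I would supply the two ingredients needed to estimate the right-hand side. Since $\chi\neq 2$ and $\chi\in[1,2]$ from Section \ref{ss.variance}, we have $\chi<2$, so Lemma \ref{l.expEfinite} gives $\mathbb E[|E|]<\infty$ and then Lemma \ref{l.stackfraction} yields $(a_\ell-b_\ell)/\ell\to 0$ almost surely; since $|a_\ell-b_\ell|/\ell\leq 1$, bounded convergence gives $\psi(\ell)=o(\ell)$. For a uniform-in-$n$ bound on $\mathbb E\bigl[L\,\mathbf 1_{\{J>n\}}\bigr]$ I would apply the optional stopping theorem to the mean-zero simple random walk $j\mapsto \cou(-j,-1)$ (whose $\pm 1$ increments are i.i.d.\ regardless of $p$) at the bounded stopping time $n\wedge J$, obtaining
$$0 \;=\; \mathbb E\bigl[\cou(-(n\wedge J),-1)\bigr] \;=\; \mathbb E\bigl[\cou(-n,-1)\,\mathbf 1_{\{J>n\}}\bigr] + \mathbb E\bigl[\cou(-J,-1)\,\mathbf 1_{\{J\leq n\}}\bigr].$$
Since $\cou(-J,-1)=2-|X(-J,-1)|$ and $\mathbb E[|X(-J,-1)|]=\chi<\infty$, the variable $\cou(-J,-1)$ is integrable, and dominated convergence gives $\mathbb E[\cou(-J,-1)\,\mathbf 1_{\{J\leq n\}}]\to\mathbb E[\cou(-J,-1)]=2-\chi$. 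Because $L=-\cou(-n,-1)$ on $\{J>n\}$, this rearranges to $\mathbb E[L\,\mathbf 1_{\{J>n\}}]\to 2-\chi$, which is bounded uniformly in $n$. Finally, $J<\infty$ a.s.\ (else $X(-\infty,-1)$ would contain no burger, contradicting Proposition \ref{p.allmatched}), so $\Pr[J>n]\to 0$.

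To close, given $\varepsilon>0$ pick $\ell_0$ with $\psi(\ell)\leq\varepsilon\ell$ for $\ell\geq\ell_0$, so that $\psi(L)\leq\varepsilon L+\ell_0$. Then
$$\mathbb E\bigl[\psi(L)\,\mathbf 1_{\{J>n\}}\bigr] \;\leq\; \varepsilon\,\mathbb E\bigl[L\,\mathbf 1_{\{J>n\}}\bigr] + \ell_0\,\Pr[J>n] \;\xrightarrow[n\to\infty]{}\; \varepsilon(2-\chi),$$
and letting $\varepsilon\to 0$ finishes the proof. The main obstacle is the uniform bound on $\mathbb E[L\,\mathbf 1_{\{J>n\}}]$: a naive Cauchy--Schwarz pairing of $\sqrt{\mathbb E[L^2]}=O(\sqrt n)$ (from Lemma \ref{l.wordlength}) against $\sqrt{\Pr[J>n]}$ only yields $O(n^{1/4})$, whereas the optional stopping identity above directly identifies the quantity with the $2-\chi$ already analyzed in Section \ref{ss.variance}.
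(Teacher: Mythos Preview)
Your argument has a genuine gap at the very first step: the claimed identity $|\dis(-n,-1)|=|a_L-b_L|$ on $\{J>n\}$ is false. The $L$ orders in the reduced word $X(-n,-1)$ do \emph{not} in general consume the top $L$ burgers of $X(-\infty,-n-1)$: a $\ch$ order removes the highest $\C$ in the stack regardless of how many $\H$'s sit above it, and likewise for $\h$; only an $\x$ order necessarily removes the current top burger. Concretely, take $n=3$ with $X(-3)=X(-2)=X(-1)=\ch$, so that $X(-3,-1)=\ch\ch\ch$, $L=3$, $J>3$, and (since no $\x$'s appear) $Y(-3,-1)=\ch\ch\ch$ and $\dis(-3,-1)=3$ deterministically. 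If the top three burgers of $X(-\infty,-4)$ happen to consist of one $\C$ and two $\H$'s, then $(a_3,b_3)=(1,2)$ and $|a_3-b_3|=1\neq 3$. Your formula would hold only in the special case where every surviving order in $X(-n,-1)$ is an $\x$.

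The underlying difficulty is that on $\{J>n\}$ the quantity $\dis(-n,-1)$ is largely fixed by the $\ch$ and $\h$ symbols already present in $X(-n,-1)$, and these are functions of $X(-n),\ldots,X(-1)$ alone; the past stack $X(-\infty,-n-1)$ enters only through the $\x$ symbols that survive in the reduced word. So the asymptotic balance of that past stack (Lemma~\ref{l.stackfraction}) cannot by itself force $|\dis(-n,-1)|$ to be small, and your factorization $\mathbb E[|\dis(-n,-1)|\,1_{J>n}]=\mathbb E[\psi(L)\,1_{J>n}]$ breaks down. The paper's proof proceeds quite differently: it reweights $\mu$ by $(1-\cou(-n,-1))1_{I\geq n}$ to obtain a measure $\mu_\infty$ under which $\{J>n\}$ has positive limiting mass, rewrites the target expectation as a $\mu_\infty$-expectation of a bounded ratio, and then transfers the hypothesis $\Var[\dis_n]=o(n)$ from $\mu$ to $\mu_\infty$ via Lemma~\ref{l.zerovariancemeansmaximumdnsmall} and a recentering at the running minimum of $\cou$. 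Your optional-stopping identity $\mathbb E[L\,1_{J>n}]\to 2-\chi$ is correct and tidy, but it does not connect to $|\dis(-n,-1)|$ in the way you need.
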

\begin{proof}
Let us assume $\Var[\dis(n)] = o(n)$ and proceed to derive the conclusion of the lemma.  Let $I$ be the smallest value of $j \geq 0$ for which $\cou(-j,-1) = 1$.  For each $n$, let $\mu_n$ be the measure whose Radon-Nikodym derivative with respect to $\mu$ is given by the (a.s.\ non-negative) quantity $\bigl(1-\cou(-n,-1)\bigr) 1_{I \geq n}$. Since $\bigl(1-\cou(-n,-1)\bigr)$ is a martingale (which reaches zero at time $I$) the optional stopping theorem implies that the expectation of $\bigl(1-\cou(-n,-1)\bigr) 1_{I \geq n}$ is $1$, so that $\mu_n$ is in fact a probability measure.
Informally, $\mu_n$ is the measure obtained from $\mu$ by conditioning on $a_j := \bigl( 1-\cou(-j,-1)\bigr)$ being positive until time $n$ and multiplying the probability of each $X(-n) \ldots X(-1)$ sequence by a quantity proportional to $a_n$.

Note that if we just consider the law $\mu$ conditioned on having $a_j$ eventually hit some $C > n$ before hitting zero, then the law of $\{a_1, a_2, \ldots, a_n \}$ under this conditional law agrees with $\mu_n$.  In both measures, if we are given the values of $a_i$ up to some $j \in \{0,1,\ldots,n-1\}$, then the conditional probabilities that $a_j$ goes up to $a_j+1$ and down to $a_j-1$ are proportional, respectively, to $a_j+1$ and $a_j - 1$.  Thus, there exists a measure $\mu_\infty$ whose law restricted to $X(j): j \leq n$ agrees with $\mu_n$, for each $n$.  One can sample from $\mu_\infty$ by first sampling the $a_n$ for all $n$ (an ordinary simple random walk for negative $n$, a walk ``conditioned to stay positive for all time'' for positive $n$), which determines $\cou(-n,-1)$, and then conditioned on that, choosing the burger and order types for each step independently from the usual conditional laws.  It is well known that the $\mu_\infty$ law of the process $n^{-1/2} a_{\lfloor t n \rfloor}$ (where $n$ is fixed, $t$ is a parameter) converges to that of a three-dimensional Bessel process as $n$ tends to infinity (which can be understood as ``Brownian motion conditioned to stay positive for all time'').  In particular, this implies that for every $\delta>0$ there exists a $b = b(\delta) > 0$ such that \begin{equation} \label{e.muncoudelta} \mu_n \{ b^{-1} < n^{-1/2} |\cou(-n,-1)| < b \} \geq 1-\delta \end{equation} for all sufficiently large $n$.

Using the measure $\mu_\infty$ allows us to convert an expectation into a probability: by definition, since $J \leq I$, \begin{equation} \label{e.couvalue} \mathbb E_\mu\bigl[ |1-\cou(-n,-1)|1_{J > n} \bigr]= \mu_n \{J > n\} = \mu_\infty \{J > n \} \end{equation}  We remark that by \eqref{e.diswordlength}, \eqref{e.chidef}, \eqref{e.chi2} and our assumption that $\chi \not = 2$, together with the fact that the quantity $\cou(-n,-1) 1_{J>n}$ that appears in \eqref{e.chi2} is non-positive, the values $\mu_\infty \{J > n \}$ converge to a positive constant as $n \to \infty$.  In other words, there is a positive $\mu_\infty$ probability that $J = \infty$, even though $J$ is $\mu$ a.s.\ finite.  An analog to \eqref{e.couvalue} is the following:
\begin{equation} \label{e.disvalue} \mathbb E_\mu\bigl[ |\dis(-n,-1)| 1_{J>n} \bigr]  = \mathbb E_{\mu_\infty} \bigl[\frac{|\dis(-n,-1)|}{1-\cou(-n,-1)} 1_{J > n }\bigr]. \end{equation}
The quantity on the right is an expectation of a value that is bounded above.  Hence, to show that it tends to zero, it suffices to show that the quantity $\frac{|\dis(-n,-1)|}{1-\cou(-n,-1)}$ tends to zero in probability under the $\mu_\infty$ measure.

For this, in light of \eqref{e.muncoudelta}, it suffices to show that for every $b>0$ and $ b'>0$, we have
\begin{equation} \label{e.distightnesscriterion} \lim_{n \to \infty} \mu_\infty \{ b^{-1} < n^{-1/2} |\cou(-n,-1)| < b  \,\,\,\,\text{and}\,\,\,\, n^{-1/2} |\dis(-n,-1)| > b' \} = 0.\end{equation}

We now fix such a $b$ and $b'$ and proceed to prove \eqref{e.distightnesscriterion}.  First let us make a remark about the strategy.  We want to understand the measure $\mu_\infty$ (where we condition $a_j$ to be positive) but most of the results in this paper apply to $\mu$ (which does not have such conditioning).  The rough idea that helps us make the connection is that if we produce the simple walk $a_j$ using $\mu$ and then {\em recenter} at a place where $a_j$ is locally minimal, then the recentered process looks (locally) similar to a sample from $\mu_\infty$.

Precisely, we may sample from $\mu$ and let $\tilde j$ be the (smallest) value $j \in \{0,1,\ldots,2n \}$ where $a_j$ obtains its minimum.  With probability at least $.25$, we have $\tilde j \in \{0,1,\ldots,n/2 \}$ for even $n$.  Conditioned on $j$ and $a_{\tilde j}$, the law of $a_k$ for $k \geq j$ is just that of a simple random walk conditioned not to go below $a_j$ until time $2n$. We claim that the conditional $\mu$ law of $a_{j - \tilde j} - a_{\tilde j}$ for $j \in \{1,2,\ldots,n \}$ is similar to the $\mu_\infty$ law of $a_j$ for $j \in \{1,2,\ldots, n\}$ in the sense that the Radon-Nikodym derivative between the two remains bounded between positive constants as $n$ tends to infinity.

Indeed, we can describe this Radon-Nikodym derivative explicitly. We have two probability measures on sequences of the form $r_1, r_2, \ldots, r_n$. The Radon-Nikodym derivative is a function of $r_n$ alone; recall that we restricting attention to paths for which $r_n < 2 b n^{1/2}$.  In the $\mu_\infty$ case, the probability of each path is proportional to $r_n$, which is in turn proportional to the probability that a random walk started at $r_n$ reaches height $2b \sqrt{n}$ before reaching zero. In the case of the conditional $\mu$ law, the probability of a path is proportional to the probability that walk started at $r_n$ fails to hit zero until at least the end what was the original length $2n$ interval (i.e., at least $n/2$ more steps).  It is not hard to see that given that a particular one of these two events occurs (reaching height $2b\sqrt{n}$ before zero or staying positive for $n/2$ more steps) the conditional probability that the other occurs is at least some constant (which does not tend to zero as $n \to \infty$), and that this implies the claim.

In light of the above claim, \eqref{e.distightnesscriterion} follows from Lemma \ref{l.zerovariancemeansmaximumdnsmall}.  If the $\mu_\infty$ law of $n^{-1/2}\dis(-n,-1)$ (conditioned on $b^{-1} < n^{-1/2} |\cou(-n,-1)| < b$) failed to converge to zero, then the $\mu$ law of the maximum of $(2n)^{-1/2} |\dis(-j,-1)|$ obtained on $(0,2n)$ would have to also fail to converge to zero uniformly, contradicting Lemma \ref{l.zerovariancemeansmaximumdnsmall}.
%
\end{proof}

\subsection{Proof of Theorem \ref{t.brownianlimit}} \label{ss.theoremproof}

We now complete the proof of Theorem \ref{t.brownianlimit}.  The case $p \geq 1/2$ follows from Lemma \ref{l.pchivariance} and Lemma \ref{l.zerovariancemeansmaximumdnsmall}, so we may assume $p < 1/2$.  For each fixed value of $t$, the variance limits described in Lemma \ref{l.pchivariance} guarantee that the variance of $n^{-1/2} \dis_{\lfloor t n \rfloor }$ converges to $\alpha t$ as $n \to \infty$, where $\alpha = \max \{1-2p,0\}$ as in the statement of Theorem \ref{t.brownianlimit}.  This implies that, at least subsequentially, the random variables $n^{-1/2} A_{t n}$ converge in law to a limit as $n \to \infty$ for each fixed $t$.  The same is true of the joint law of $n^{-1/2} A_{\lfloor t n \rfloor}$, where $t$ ranges over a finite set of values $t_1, t_2, \ldots, t_k$.  Our first step toward proving Theorem \ref{t.brownianlimit} will be to show that for any such $t_1, t_2, \ldots, t_k$ this joint law converges in law to the law of the corresponding Brownian motion restricted to these values.

To begin, we claim that if $p < 1/2$ then $\mathbb E[|E|] = \infty$. We argue the contrapositive: that if $\mathbb E[|E|] < \infty$ then $p \geq 1/2$.  Lemma \ref{l.stackfraction} states that if $\mathbb E[|E|] < \infty$ then the fraction of $\H$ symbols among the top $n$ elements in $X(-\infty,0)$ a.s.\ tends to $.5$ as $n \to \infty$.  One may deduce from this (and the fact that, by stationarity, $X(-\infty,0)$ agrees in law with $X(-\infty, \lfloor t n \rfloor)$) that $n^{-1/2} \dis_{\lfloor t n \rfloor}$ must converge in probability to zero in this case; together with the bounds in Lemma \ref{l.wordlength} this implies that $\Var[\dis_n] = o(n)$ in this case, so that (by Lemma~\ref{l.pchivariance}) we must have $p \geq 1/2$.

Now, by Lemma~\ref{l.stackfraction}, we must have (since we are assuming $p< 1/2$) that as $n \to \infty$ the fraction of $\x$ symbols among the leftmost $n$ elements of $X(1,\infty)$ tends a.s.\ to zero.
Thus, the number of $\x$ symbols in $X(1,\lfloor t n \rfloor)$ is $o(n^{1/2})$ with probability tending to $1$ as $n \to \infty$.  Thus if $t_3 = t_1 + t_2$ then $A_{\lfloor t_3 n \rfloor}$ is equal to
$A_{\lfloor t_1 n \rfloor}$ plus something with the law of $A_{\lfloor t_2 n \rfloor}$ plus an error which is $o(n^{1/2})$ (which arises when we determine the values of $Y_j$ for the $X_j$ in the sequence that are equal to $\x$, and also take into account the $O(1)$ rounding error from the $\lfloor \cdot \rfloor$) with high probability.  Similarly, $A_{\lfloor t n\rfloor}$ is equal in law to the sum of $k$ independent copies of $A_{\lfloor \frac{t}{k} n \rfloor}$ plus a term that is $o(n^{1/2})$ with high probability.  From this it follows that any subsequential weak limit of the random variable $n^{-1/2} A_{\lfloor t n \rfloor}$ is infinitely divisible with the appropriate exponent and mean zero, hence a centered Gaussian {\em with some covariance matrix}; moreover, if we consider a finite collection of $t$ values, the corresponding limiting joint law has independent Gaussian increments.  Next, we claim that $\cou_{\lfloor \frac{t}{k} n \rfloor}$ and $\dis_{\lfloor \frac{t}{k} n \rfloor}$ converge separately to Brownian motions each with the correct variance.  Indeed, recalling the variance limit described in Lemma \ref{l.pchivariance}, this follows from the tightness of the random variable sequences $n^{-1} \dis_n^2$ and $n^{-1} \cou_n^2$, which is implied by the uniform decay bound of Lemma \ref{l.wordlength}.  This determines the diagonal elements of the diffusion covariance matrix for the limit of $n^{-1/2} A_{\lfloor t n \rfloor}$, but we still need to rule out the possibility of off-diagonal elements. However, it suffices to observe that the law must be symmetric under the operation that replaces all $\{\H, \h \}$ symbols with the corresponding $\{\C, \ch \}$ symbols, so that by symmetry the limiting covariance between the $\cou$ and $\dis$ components must be zero.
The extension from the finite-dimensional convergence result above to the stronger form of convergence claimed in the theorem statement follows from exactly same argument used to establish the $p \geq 1/2$ case in the proof of Lemma \ref{l.zerovariancemeansmaximumdnsmall} (noting that Lemma~\ref{l.translateprob} can be used to bound the oscillation of $\dis$ on small intervals).

\begin{remark} \label{r.generalization}
When $p \leq 1/2$, the conclusion of Theorem \ref{t.brownianlimit} holds for $t \in [0,\infty)$ even if we condition on an arbitrary time-zero burger stack $S_0$ in place of $X(-\infty, 0)$, because the number of $\x$ symbols in $X(1,n)$ is $o(n^{1/2})$ with high probability.  When $p > 1/2$ the argument combined with the monotonicity results shows that the convergence still holds if we condition on an initial stack that is {\em well balanced} in the sense that the fraction of hamburgers among its top $k$ elements tends to $1/2$ as $k$ tends to infinity.
\end{remark}








\section{Random planar maps} \label{s.bijection}

\subsection{Bijection} \label{ss.bijection}

We begin by recalling a few classical constructions.  A {\bf planar map} $M$ is a planar graph together with a topological embedding into the sphere (which we represent as the compactified complex plane $\mathbb C \cup \{\infty \}$).   Self loops and edges with multiplicity are allowed.  Some of the first enumeration formulas for planar maps were given by Mullin and by Tutte in the 1960's \cite{MR0205882, MR0218276} and since then a sizable literature on enumerations of planar maps of various types (and various bijections with labeled trees, walks, pairs of trees, etc.) has emerged.  We describe only the very simplest formulations here.  Let $V=V(M)$ be the set of vertices of $M$ and $F=F(M)$ the set of faces.  Let $Q = Q(M)$ be the map whose vertex set is $V \cup F$, and whose edge set is such that each $f \in F$ is connected to all the vertices along its boundary (see Figure \ref{planarmapfigs}).  In other words, $Q$ is obtained from $M$ by adding a vertex to the center of each face and then joining each such vertex to all of the vertices (counted with multiplicity) that one encounters while tracing the boundary of that face. In particular $Q$ is bipartite, with the two partite classes indexed by $V$ and $F$, and all of the faces of $Q$ are quadrilaterals (with one quadrilateral for each edge of $M$).  Let $M'$ denote the dual map of $M$ (the map whose vertices correspond to the faces of $M$ --- an edge joins two vertices in $M'$ if an edge borders the corresponding faces in $M$).  See Figure \ref{planarmapfigs}.

\begin {figure}[!ht]
\begin {center}
\includegraphics [width=5in]{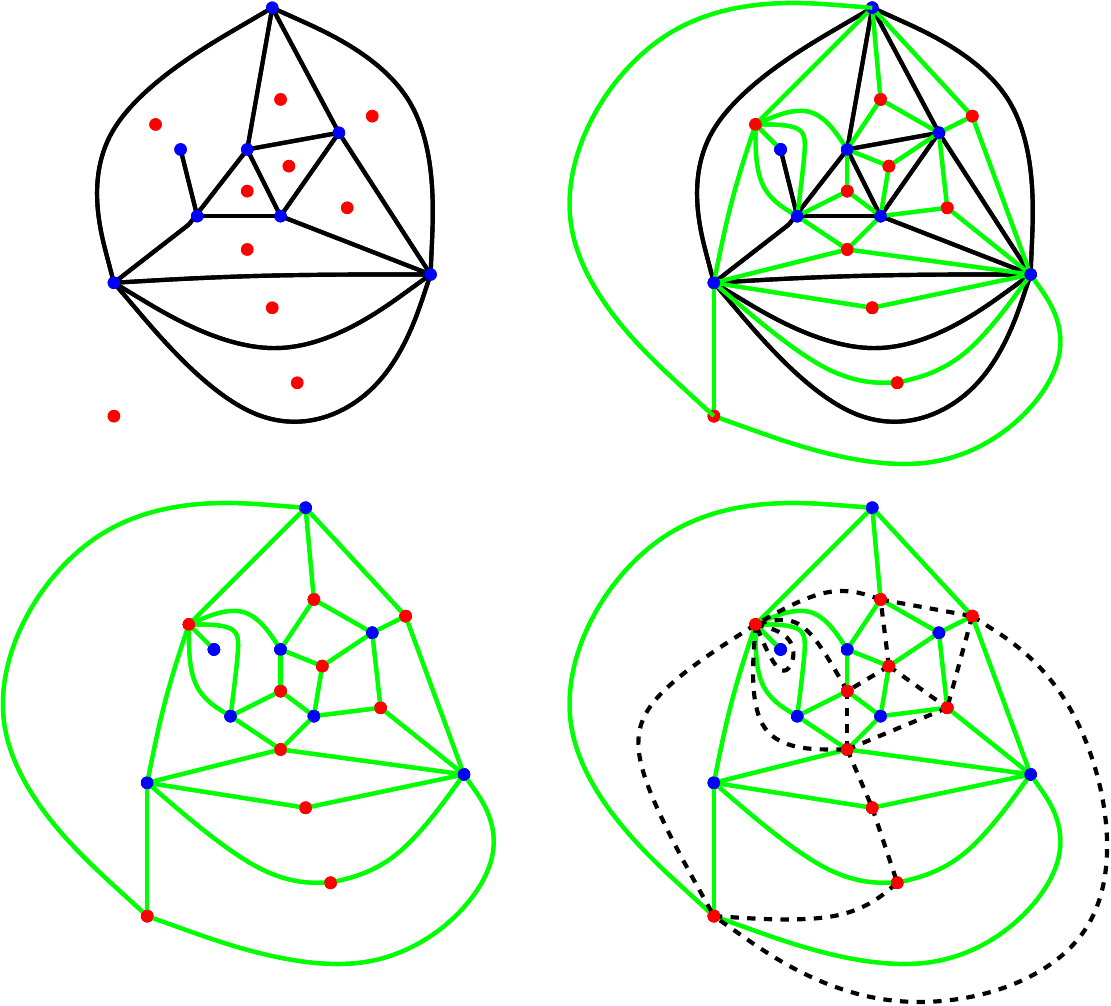}
\caption {\label{planarmapfigs} Upper left: a planar map $M$ with vertices in blue and ``dual vertices'' (one for each face) shown in red.  Upper right: the quadrangulation $Q=Q(M)$ formed by adding a green edge joining each red vertex to each of the boundary vertices of the corresponding face.  Lower left: quadrangulation $Q$ shown without $M$.  Lower right: the dual map $M'$ corresponding to the same quadrangulation, obtained by replacing the blue-to-blue edge in each quadrilateral with the opposite (red-to-red) diagonal.}
\end {center}
\end {figure}

\begin {figure}[!ht]
\begin {center}
\includegraphics [width=6in]{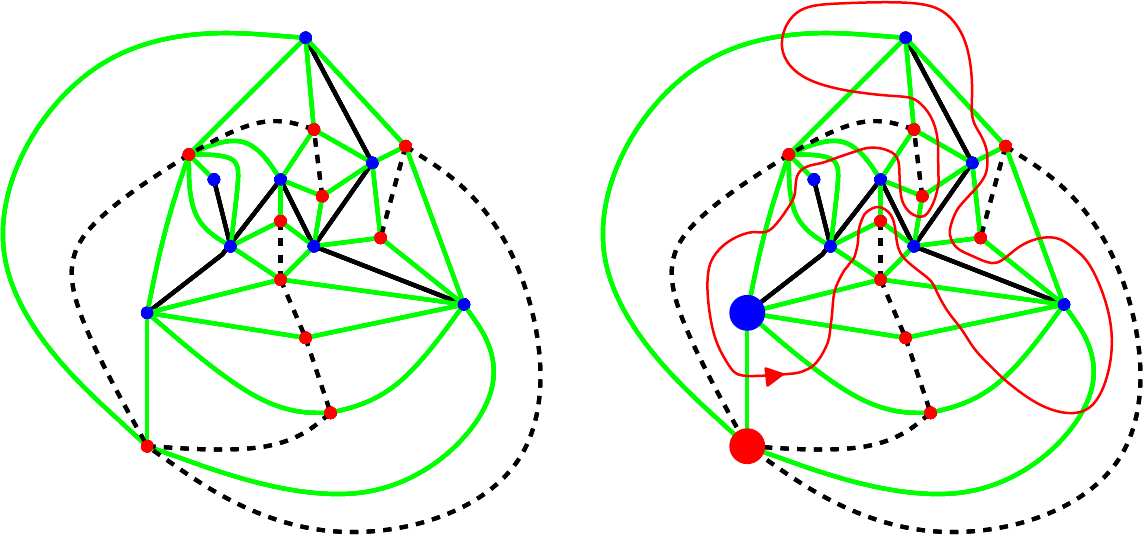}
\caption {\label{planarmapfigstree} Left: in each quadrilateral we either draw an edge (connecting blue to blue) or the corresponding dual edge (connecting red to red).  In this example, the edges drawn form a spanning tree of the original (blue-vertex) graph, and hence the dual edges drawn form a spanning tree of the dual (red-vertex) graph.   Right: designate a ``root'' (large blue dot) and an adjacent ``dual root'' (large red dot).  The red path starts at the midpoint of the green edge between the root and the dual root and crosses each of the green edges once, keeping the blue endpoint to the left and red endpoint to the right, until it returns to the starting position.}
\end {center}
\end {figure}

Now, suppose that $M$ is endowed with a distinguished oriented edge $e$.  This determines an oriented edge $e_0$ of $Q$ that has the same initial endpoint as $e$ and is the next edge clockwise (among all edges of $M$ and $Q$ that start at that endpoint) from $e_0$.  We refer to the endpoint of $e_0$ in $V$ as the {\em root} and the endpoint in $F$ as the {\em dual root}.  Now, suppose that $T$ is a subset of the edges of $M$ corresponding to a spanning tree.  Then the set $T'$ of dual edges to the edges in the complement of $T$ is necessarily a spanning tree of $M'$, see Figure \ref{planarmapfigstree}.  The union of $T$, $T'$ and $Q$ forms a triangulation, with each triangle containing two edges from $Q$ and one from either $T$ or $T'$.  Let $e_0,e_1, e_2, \ldots, e_{2n} = e_0$ be the sequence of edges of this triangulation hit by the path shown in Figure \ref{planarmapfigstree}, which starts at a midpoint of $e_0$ and then crosses each edge of $Q$ exactly once (with an element of $V$ on the left and an element of $F$ on the right) before returning to the initial edge.  This path goes through each triangle without ever crossing an edge of $T$ or $T'$ (and in a sense it describes the boundary between $T$ and $T'$).

For each $e_i$, let $d(e_i) = (d_1,d_2)$, where $d_1$ is the number of edges in the tree $T$ in between the $V$ endpoint of $e_i$ and the root, and $e_2$ is the number of edges in the tree $T'$ in between the $F$ endpoint of $e_i$ and the dual root.  Then the sequence $d(e_0), d(e_1), \ldots, d(e_{2n}) = d(e_0)$ is a simple walk on the lattice $\Z_+^2$ of non-negative integer pairs.  We can associate a corresponding word in the alphabet $\Theta$ by writing $\H$ or $\C$ each time the first or second (respectively) coordinate of $d(e_i)$ goes up and an $\h$ or $\ch$ each time the first or second (respectively) coordinate of $d(e_i)$ goes down.  The following is the word corresponding to Figure \ref{planarmapfigstree}: $$\C\C\C\H\C\H\H\ch\H\ch\ch\ch\C\C\h\H\ch\H\ch\C\C\h\C\h\h\ch\ch\h\H\h\h\ch.$$

In fact, this construction describes a bijection (essentially due to Mullin \cite{MR0205882} but more explicitly explained by Bernardi \cite{MR2285813}) between the set of pairs $(M,T)$ --- where $M$ is rooted planar map with $n$ edges and $T$ a distinguished spanning tree --- and the set of walks of length $2n$ in $\Z_+^2$ that start and end at the origin.  This set of walks is in turn equivalent to the set of length $2n$ words $W$ in the symbols $\{\C,\H,\ch,\h\}$ for which $\overline W = \emptyset$.

We can say more about this bijection.  Every quadrilateral of $Q$ corresponds to a burger. The quadrilateral is divided by an edge in $T \cup T'$ into two triangles.  The first triangle the path goes through corresponds to the step at which that burger was added to the stack, while the second corresponds to the step at which the same burger was ordered.  Quadrilaterals of $Q$ that are divided by $T$ edges correspond to hamburgers while elements divided by $T'$ edges correspond to cheeseburgers.  Another equivalent point of view is that every vertex of $T$ (besides the root) corresponds to a hamburger and every vertex of $T'$ (besides the dual root) corresponds to a cheeseburger (see Figure \ref{burgerchain2}).  Let $U_k$ be the union of the first $k$ triangles traversed.  Then a burger is added the first time the corresponding vertex is part of $U_k$, and ordered the first time the corresponding vertex lies in the interior of $U_k$.  The outer boundary vertices of $U_k$ (excluding the root and the dual root) represent the burgers on the stack at time $k$.  (The fact that $T$ and $T'$ are trees ensures that $U_k$ will be simply connected for all $k$.)

\begin {figure}[!ht]
\begin {center}
\includegraphics [height=3in]{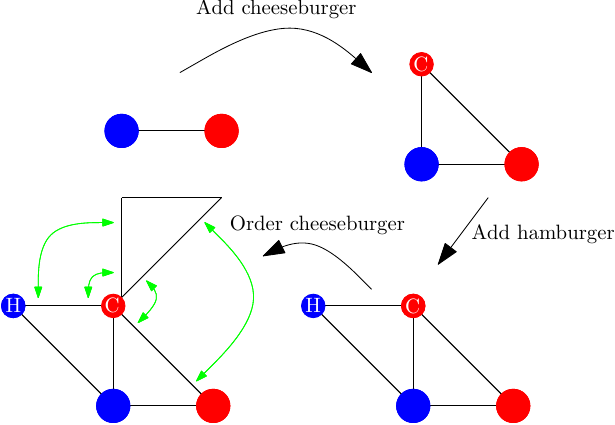}
\caption {\label{burgerchain2} Start with a single edge (upper left). We will recursively construct a triangulation with two types of vertices by ``exploring'' triangles one at a time (c.f.\ Angel's peeling process \cite{angelpeeling}). At each step the outer boundary of the explored region has two edges connecting two vertices of different types: one ``special edge''  (to which a new triangle will be glued) and one ``root edge'' (the initial edge in the construction; the special edge and root edge coincide at the beginning). When a new triangle is added, there are four possibilities for its ``type,'' corresponding to the four symbols.  Adding a triangle corresponding to one of the two ``burger'' symbols amounts to drawing a correspondingly colored vertex outside the surface and making a triangle by connecting it to the endpoints of the special edge (i.e., the clockwise-most boundary hamburger and the counterclockwise-most boundary cheeseburger). Adding a triangle corresponding to one of the ``order'' symbols amounts to making a new triangle by connecting one endpoint of the special edge to another vertex adjacent to the opposite endpoint of the special edge.  At any given time in this process, the small dots on the outer boundary of the explored region correspond to burgers that have not yet been consumed.  To put this somewhat fancifully, an illiterate restaurant owner armed only with triangular pieces of paper and cheeseburger and hamburger stickers could use this scheme to keep track of the day's events, gradually constructing a surface as the day goes on. }
\end {center}
\end {figure}

Now what happens if we remove the requirement that $T$ be a tree?  Let $T$ be any subset of the edges of $M$ and let $T'$ be the set of dual edges of the edges in the complement of $T$.  As shown in the left side of Figure \ref{planarmapfigsloops}, we still have a collection of loops, each of which passes through some subset of the triangles (crossing only edges of $Q$, no edges of $T$ or $T'$) such that every edge in $Q$ is crossed exactly once.

Observe carefully the loops on the left hand side of Figure \ref{planarmapfigsloops}.    We will now describe a way (corresponding to the right side of Figure \ref{planarmapfigsloops}) to modify the pair $T,T'$ (replacing some edges with dual edges, and vice versa) in a canonical way to obtain a tree/dual-tree pair $\tilde T, \tilde T'$.

Let $L_0, L_1, \ldots$ be the loops (as shown in Figure \ref{planarmapfigsloops}), where $L_0$ is the special loop that passes through the edge $e_0$.  Each loop can also be identified with the set of triangles it passes through and viewed as a subset of the set $\mathcal T$ of triangles in $T \cup T' \cup Q$.  Note that $\mathcal T$ can itself be viewed as a graph (two triangles adjacent if they share an edge).
Let $C_1, \ldots, C_k$ be the components of $\mathcal T \setminus L_0$.  (If we remove from the surface all of the triangles that $L_0$ passes through, then each $C_j$ is a connected subset of the triangles that remain.)  Clearly, $L_0$ passes through at least one triangle on the outer boundary of each $C_j$ (i.e., at least one triangle that shares an edge with a triangle in $C_j$).  Let $A_j$ describe the {\em last} triangle sharing an edge with the boundary of $C_j$ that is traversed by $L_0$.

This edge is either an edge of $T$ or an edge of $T'$.  In either case, we will now {\em replace it} with the opposite diagonal of the same quadrilateral (i.e., we replace a dual edge with an edge or an edge with a dual edge).  The effect of all of these replacements is to join one loop in each of the $C_j$ to the primary loop $L_0$.  The total number of loops has decreased by $k$.  We then repeat this process until at the end there is only a single loop dividing a tree from a dual tree, which we denote by $\tilde T$ and $\tilde T'$.  See Figure \ref{planarmapfigsloops}.  In that figure we colored all of the edges in $\tilde T \setminus T$ and $\tilde T' \setminus T$ a different color --- we refer to these as ``fictional edges'' since they do not belong to the original pair $T, T'$.

\begin {figure}[!ht]
\begin {center}
\includegraphics [width=6in]{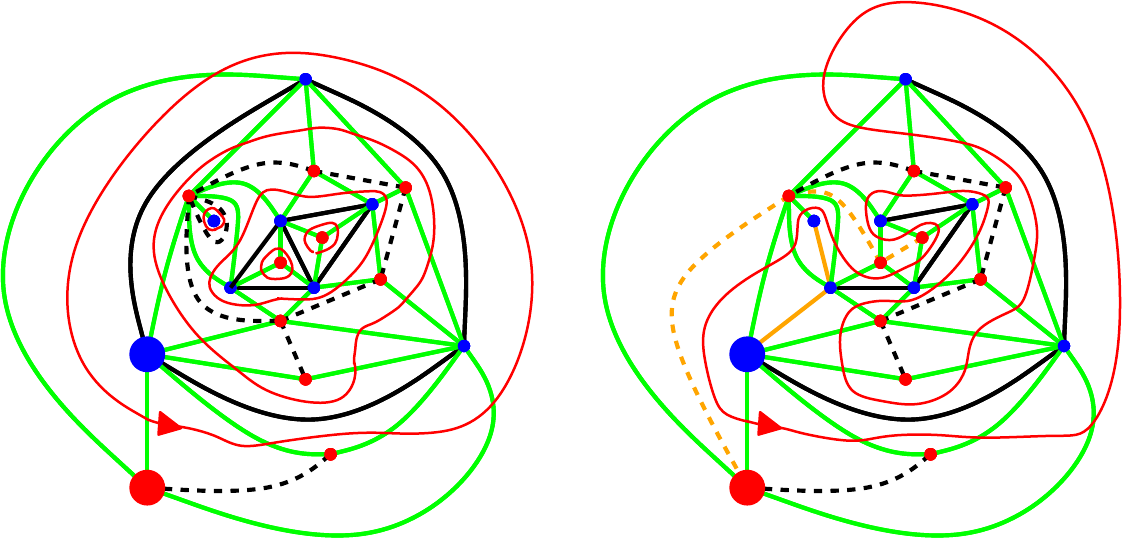}
\caption {\label{planarmapfigsloops} {\bf Left:} The subset of edges does not form a spanning tree, so we obtain {\em multiple} red loops (again, each green line is crossed once).  {\bf Right:} a canonical way to replace some edges with dual edges, and vice versa, to obtain a spanning tree.}
\end {center}
\end {figure}

To describe this $(M,T)$ by a word in $\Theta$, we first construct the word $\tilde W$ in $\{\C,\H,\ch,\h\}$ corresponding to $\tilde T$ and $\tilde T'$ via the method described above.  We then make an observation about $\tilde W$.  We already know from above that whenever the second triangle of a quadrilateral is traversed by the path corresponding to $\tilde T$ and $\tilde T'$, it corresponds to an order $\ch$ or $\h$.  We claim that whenever the second triangle of a ``fictional'' quadrilateral is traversed, it corresponds to the consumption of a burger on the top of the stack.  To see this, let $C_1, \ldots, C_k$ be as above and consider the boundary edges of one of the $C_j$.  Since these boundary edges cannot include any edges of $Q$, they consist only of edges in $T$ or $T'$.  Since the boundary of $C_j$ is connected, it contains either only edges of $T$ (and vertices of $V$) or only edges of $T'$ (and vertices of $F$).  Now, there is one edge or dual edge on the boundary of $C_j$ that is altered --- replaced by a fictional edge or dual edge --- to create two new triangles.  The reader may observe that in between the time that the final path goes through the first of these triangles and the time it goes through the second triangle, it does not traverse the first half of any other quadrilateral without traversing also the second half.  In other words, no burger added after the first triangle is traversed remains on the stack when the second triangle is traversed.

Now to define the word $W$ that corresponds to $(M,T)$, we start with $\tilde W$ and then put an $\x$ in place of each $\ch$ or $\h$ that corresponds to a triangle with a fictional edge or dual edge.  For example, the word corresponding to Figure \ref{planarmapfigsloops} is $$\C\H\ch\H\C\C\C\h\C\C\C\h\ch\H\H\ch\H\ch\ch\H\ch\C\C\h\h\x\h\x\H\x\x\x.$$
As noted above, $\tilde W$ corresponds to the same burger production/consumption sequence as $W$.  Given a sequence of this type, it is straightforward to reconstruct the corresponding planar map.  Namely, we first replace each $\x$ with the corresponding $\ch$ or $\h$, then construct the corresponding map with $\tilde T$ and $\tilde T'$, and then reverse the edges and dual edges corresponding to $\x$ symbols to create $T$ and $T'$.  Essentially the same argument as above shows that we recover the same $\tilde T$ and $\tilde T'$ when we follow the algorithm to create them from $T$ and $T'$.  We have now given a bijection between length $2n$ words $W$ in $\Theta$ with $\overline W = 0$ and pairs $(M,T)$, where $M$ is a rooted planar map with $n$ edges and $T$ is {\em any} distinguished subset of the edges of $M$.  In the case that $M$ is a fixed planar map, this is equivalent to a construction given by Bernardi in Section 4 of \cite{MR2438581}, as alluded to in the introduction.  Specifically, Section 4 of \cite{MR2438581} describes a mechanism for constructing $\tilde T$ from $T$ that is equivalent to the one described above.

Each loop $L \not = L_0$ has an inside (the component of the sphere minus $L$ that contains $L_0$) and an outside (the component containing $L_0$).  There are two types of loops $L$: those that pass only through triangles with edges on the inside (dual edges on the outside) and those that pass only through triangles containing edges on the outside (dual edges on the inside).  In other words, each loop can be viewed as a boundary interface between a cluster and a dual cluster, and the cluster can be either inside or outside of the loop.  Since every cluster (dual cluster) has a unique loop tracing its outer boundary, we can count the clusters (dual clusters) by counting the number of loops of the two types, which in turn corresponds to counting the number of $\x$'s matched to $\C$'s or to $\H$'s, as explained in the following table.

\vspace{.2 in}
\begin{center}
\begin{tabular} {|l|l|}
\hline
{\bf ROOTED MAP/EDGE SET: $(M,T)$} & {\bf CORRESPONDING WORD: $W$} \\
\hline
Number of edges in $\tilde T$ & Number of $\H$'s in $W$.  \\
\hline
Number of edges in $\tilde T'$ & Number of $\C$'s in $W$. \\
\hline
Number of loops &  $1$ plus number of $\x$'s in $W$. \\
\hline
Number of components of $T$ &  $1$ plus number of $\x$'s matched to $\C$'s. \\
\hline
Number of components of $T'$ &  $1$ plus number of $\x$'s matched to $\H$'s. \\
\hline
Number of edges in $T$ & Number of $\H$ to $\h$ or $\C$ to $\x$ matches.  \\
\hline
Number of edges in $T'$ & Number of $\C$ to $\ch$ or $\H$ to $\x$ matches. \\
\hline
\end{tabular}
\end{center}
\vspace{.2 in}

Part of our motivation for introducing the bijection of this section is that a complicated ``non-local'' quantity on the planar map side (number of loops) becomes a more straightforward ``local'' quantity on the $\Theta$-word side (number of $\x$ symbols).

\subsection{Critical Fortuin-Kastelyn model and infinite volume limits}

Now, suppose we fix a $p$, choose an $X(n)$ sequence from $\mu$, and then condition on $\overline W = 0$ where $W = X(1) \ldots X(2n)$.  Using the bijection above, this gives us a random rooted planar map $M$ decorated by a subgraph $T$.  The probability of seeing a particular $(M,T)$ with $\ell$ loops (which corresponds to $\ell$ of the $\x$ characters) is then proportional to \begin{equation} \label{e.pell} \Bigl( \frac{p/2}{(1-p)/4} \Bigr)^\ell = (2p/(1-p))^\ell.\end{equation}
If for some $q>0$ we write $p = \frac{\sqrt q}{2 + \sqrt q}$, so that $p$ solves the equation $2p/(1-p)=\sqrt q$, then \eqref{e.pell} becomes equivalent to $\sqrt{q}^\ell = q^{\ell/2}$.

It is natural to choose a random $(M,T)$ pair from the uniform probability measure on such pairs (with $n$ total edges in $M$) weighted by a quantity proportional to $q^{\ell/2}$.  One reason this is natural is that (once we condition on $M$) the law of $T$ is that of the self-dual Fortuin-Kastelyn model, which is in turn related to the critical $q$-state Potts model, see \cite{0305-4470-9-3-009}.  We will not say more about this here, but the survey \cite{MR2065722} contains one clear account of this connection (as well as conjectures, due to Duplantier, Nienhuis, and others, relating these models to SLE and conformal loop ensembles with $q = 2 + 2\cos(8\pi/\kappa)$ when $q \in (0,4)$ and $\kappa \in (4,8)$).

Next we claim that \begin{equation}\label{e.logprobtozero} \lim_{n \to \infty} \frac{1}{n} \log \mu \{ X(1,n) = \emptyset \} = 0.\end{equation}
In other words, the probability that $X(1, n) = \emptyset$ tends to zero more slowly than any exponential function of $n$.  (In fact,  Gwynne and Sun \cite{finitevolumeestimates} have recently proved that it decays like a negative power of $n$, and have derived the exact exponent as a function of $p$, but we will not need this here.)

Let $m = \lfloor \sqrt{n} \rfloor$ and note that it follows from Theorem \ref{t.brownianlimit} (using the type of argument in Lemma \ref{l.wordlength}) that $X(m, n-m)$ has a probability bounded below (independently of $n$) of containing at most $m$ elements.  Conditioned on this, there is some positive probability that the values of $X(1), \ldots, X(m)$ and $X(n-m+1) \ldots X(n)$ are such that $X(1,n) = \emptyset$.  This conditional probability cannot be smaller than exponentially small in $m$, and since $m=o(n)$, this implies \eqref{e.logprobtozero}.

Now we recall Cramer's theorem (see, e.g., the reference text \cite{dembo2009large}). Suppose that $Z_1, Z_2, \ldots$ are i.i.d.\ random variables each of which takes values in $\{1, 2, \ldots, k \}$ with resp.\ probabilities $a_1, a_2, \ldots, a_k$.  Write $a = (a_1, a_2, \ldots, a_k)$ for the probability vector. Let $u^n$ be a vector encoding the empirical distribution of $Z_1, Z_2, \ldots, Z_n$. That is, the $j$th component of the vector $u^n$, which we may denote by $u^n_j$,  is the fraction of the $n$ elements $Z_1, Z_2, \ldots, Z_n$ that are equal to $j$. Then Cramer's theorem states that if $A$ is any neighborhood in $\mathbb R^k$ about the point $a$, then $\mathbb P \{u^n \not \in A \}$ decays exponentially as $n \to \infty$.

Now, fix some large value for $m$ and let $Z_j = (X_{j m}, X_{j m + 1}, X_{j m + 2}, \ldots, X_{j m + m-1})$. That is, the $Z_j$ correspond to length-$m$ blocks of elements from the sequence $X_{\cdot}$. Now, if the $X_1, X_2, \ldots$ are i.i.d.\ then the corresponding $Z_1, Z_2, \ldots$ are i.i.d.\ and Cramer's theorem implies that $P\{u^n \not \in A \}$ decays exponentially as $n \to \infty$.  But \eqref{e.logprobtozero} implies that the $\mathbb P \{ X(1,mn+r) = \emptyset \}$ decays {\em subexponentially} in $n$ (uniformly over values of $r \in \{0,1,2,\ldots, m-1 \}$). This implies that if we condition on $X(1,mn+r) = \emptyset$ then (for any open set $A \subset \mathbb R^k$, where $k$ is the number of possible values $Z_1$ as defined above can assume, and $A$ contains the probability vector whose $\ell$th entry is the probability that $Z_1$ assumes the $\ell$th of its possible values) the conditional probability that $u^n \not \in A$ tends to zero as $n \to \infty$ (uniformly over choices for $r$).  

From this we may conclude that if one chooses $X(1) \ldots X(N)$ conditioned on $X(1,N) = \emptyset$ and then picks a random $k$ and recenters the sequence at $X(k)$, then one obtains a sequence of random recentered processes whose infinite volume limit is a random process with law $\mu$.  
In this sense, $\mu$ describes an infinite volume limit of the critical Fortuin-Kastelyn models.  Given a sequence $X(\cdot)$ sampled from $\mu$, the corresponding sequence $Y(\cdot)$ can be then used to construct an infinite random surface with an infinite spanning tree and spanning dual tree (in the manner described above for finite words).
From this perspective, Theorem \ref{t.brownianlimit} can be understood a scaling limit result about this random pair of infinite trees (which in turn encode the structure of a random infinite planar map).


\appendix

\section{Background: context and motivation} \label{s.background}
We now say a bit more about the larger project that motivated this work, namely, the problem of relating discrete random surfaces to continuum objects like Liouville quantum gravity, conformal loop ensembles, and the Schramm-Loewner evolution.

A longstanding open physics-motivated problem is to show that as the number of edges in the random maps described here tends to infinity, the corresponding loop-decorated surfaces (appropriately rescaled) converge in law to a limiting loop-decorated random surface $\mathcal M$.
The physics literature provides ample heuristic evidence, in various settings, for the existence of a phase transition analogous to the $p=1/2$ transition we present: $p > 1/2$ corresponds to what is called the {\em branched polymer} (a.k.a.\ {\em continuum random tree}) phase, while $p < 1/2$ is the {\em Liouville quantum gravity} phase.  The literature is too vast for us to properly survey here, but sample works in this direction include \cite{MR788098,0295-5075-7-8-009, Baillie199244, Thorleifsson1993787, 1988PhRvL..61.1433D, 1989NuPhB.326..583K, 1989MPLA....4..217K, 1990NuPhB.340..491D, MR1281359, Daul-1995, MR1432821, 1999PhLB..463..273E, 2006math.ph..11087E, 2009arXiv0909.1695B, 2011JSMTE..01..010B}.  This paper presents a clear and rigorous illustration of the phase transition.

The $p>1/2$ case is relatively simple.  Applying Theorem \ref{t.brownianlimit} when $p > 1/2$ we see that in the scaling limit, the spanning tree and the dual spanning tree that we construct converge in law to a.s.\ identical continuum random trees.  The collection of quadrilaterals corresponding to the tree and the collection corresponding to the dual tree can each be interpreted as approximations of the same continuum random tree: gluing the two together produces a surface that, as a metric space, should approximate the same tree.  (Note that since $X(-\infty,n)$ corresponds to a branch of a tree and $X(n,\infty)$ a branch of the corresponding dual tree, Proposition \ref{p.bottleneck} implies that when we glue these surfaces together a positive fraction of the points along any branch of the tree or dual tree correspond to length-one loops that are ``bottlenecks'' of the combined surface.  We do not know whether this remains true when $p=1/2$.)

The $p<1/2$ case is more interesting.  Based on various theorems and heuristics, we expect the limiting surface $\mathcal M$ to be a {\em Liouville quantum gravity (LQG)} surface decorated by a {\em conformal loop ensemble (CLE)}, with respective parameters $\gamma$ and $\kappa$ depending on $q$.   Precise formulations of this conjecture, along with definitions of LQG surfaces (whose laws depend on a parameter $\gamma$) and CLEs (whose laws depend on a parameter $\kappa$, and are based on the Schramm-Loewner evolution with parameter $\kappa$), can be found in \cite{sheffield-2006, 2008arXiv0808.1560D, 2010arXiv1012.4797S, 2010arXiv1012.4800D} (along with much more extensive lists of references; the CLEs relevant to this paper have a parameter $\kappa \in (4,8]$, while the $\kappa \leq 4$ case is discussed in \cite{2010merged}).  But in a sense this question has been around since the 1980's when Polyakov first introduced Liouville quantum gravity \cite{MR623209, MR623210} (see also \cite{MR947880}).

Polyakov writes in a recent memoir \cite{2008arXiv0812.0183} that he first became convinced of the connection between the discrete models and Liouville quantum gravity after jointly deriving, with Knizhnik and Zamolodchikov, the {\em KPZ formula} for so-called LQG scaling dimensions and comparing these dimensions with known combinatorial results for the discrete models \cite{MR947880}.  The KPZ formula was recently proved mathematically in \cite{2008arXiv0808.1560D, MR2501276} and can indeed be interpreted as evidence for the convergence conjectures.  But actually proving these conjectures, even in a physics sense, remains a challenge.

There are various ways to formulate a convergence statement, depending on what topology one uses when talking about convergence in law \cite{2010arXiv1012.4797S}.  One approach is to consider a model without loops and to interpret the discrete surfaces as random metric spaces.  If one uses the Gromov-Hausdorff topology on metric spaces, then recent independent works by Miermont and Le Gall show that these metric spaces converge in law to a limiting random metric space called the {\em Brownian map} \cite{2011arXiv1104.1606M, 2011arXiv1105.4842L}.  Prior to these recent works, it was known that these discrete random metric spaces converge in law to limiting random metric spaces along subsequences, and that any subsequential limit is a random metric space that is almost surely homeomorphic to the sphere; see \cite{MR2336042, 2008LeGall, MR2399286, MR2438999}, as well as the recent survey \cite{2011arXiv1101.4856L}.  The Brownian map is conjecturally equivalent, as a metric space, to an LQG surface with $\gamma = \sqrt{8/3}$ \cite{2010arXiv1012.4797S}.\footnote{One barrier to establishing this equivalence is that it remains an open question even to show that LQG surfaces {\em have} a canonical metric space structure. There is an obvious way to define a two-point distance function on an LQG surface in terms of limits, as explained in a footnote in \cite{2010arXiv1012.4797S}, but it has not been proved that these limits exist and are non-trivial.}  So far, there has been little success in extending the metric space theory to more general FK-weighted random surfaces, which are believed to correspond to other values of $\gamma$.

Another approach to the convergence problem is to ignore metric space structure and focus instead on conformal structure.  In this case, one may consider a conformal map from a random discretized surface to the sphere, and then study the induced area measure on the sphere and the images of the loops on the sphere.  The conformal point of view plays a central role in the physics literature on Liouville quantum gravity and ``random metrics'', which has been developed as part of a general subject called conformal field theory. (In this literature, the term ``metric'' refers most directly to a Riemannian metric tensor, not a two-point distance function.)  Various precise conjectures along these lines are presented in \cite{2008arXiv0808.1560D, 2010arXiv1012.4797S}, along with a more extensive bibliography.

This paper takes a third approach.  We encode loop-decorated surfaces by walks on $\mathbb Z^2$ (``inventory trajectories'').  Roughly speaking, the loops determine a canonical non-self-intersecting ``exploration path'' which goes through every face on the surface and more or less traces all the loops.  This path divides the surface into a ``tree'' and a ``dual tree'' --- and the exploration driving function encodes the structure of this pair of trees.  If we consider a topology in which two loop-decorated surfaces are considered close when their exploration driving functions are close, then Theorem \ref{t.brownianlimit} can be interpreted as a scaling limit result for the random surfaces.  It describes the limiting law of the pair of trees and the manner in which they are glued together.   (Note: the symmetry between the tree and dual tree makes our approach very different from the version of the Schaeffer bijection \cite{MR1465581} used to define the Brownian map \cite{2011arXiv1101.4856L}.  The latter involves a breadth-first search tree with geodesic branches and a dual tree that looks completely different from the tree itself --- in particular, its diameter scales like a different power of the number of edges.)

It turns out that if one replaces discrete loops with a continuum CLE, then the loops determine a continuum (space-filling) analog of the ``exploration path'' described above, which traces through all of the loops in a canonical way \cite{sheffield-2006, ms2013imag4}.  We can interpret this continuum exploration process as tracing the boundary between a continuum (space-filling) tree and a continuum (space-filling) dual tree.  If one draws these trees on a Liouville quantum gravity random surface, then the lengths of branches of the tree are well defined a.s\ by the results of \cite{2010arXiv1012.4797S}, so that one can construct a continuum analog of the exploration driving function described in this paper.

As mentioned in the introduction, since this paper was originally posted the arXiv there have been several additional papers that have extended the theory developed here. One recently completed work with Bertrand Duplantier and Jason Miller \cite{dms2014mating} demonstrates that LQG surfaces decorated by CLEs indeed have well-defined exploration driving functions themselves, and that their laws are those of precisely the same kinds of Brownian motions as long as the parameters $p,q,\kappa,\gamma$ are matched up correctly.\footnote{Specifically, $p\in[0,1/2)$, $q \in [0,4)$, $\kappa \in (4,8]$, $\gamma \in [\sqrt{2},2)$ and $q= 2 + 2 \cos \frac{8\pi}{\kappa}$, $\gamma = \sqrt{16/\kappa}$, $p= \frac{\sqrt q}{2 + \sqrt q}$.}    As in the discrete case, the continuum exploration driving function should encode the metric space structure of a certain tree (here a continuum random tree) and a dual-tree.  The simplest case is when $\kappa = 8$, corresponding to the case $p=0$.  The results of this paper are trivial (following from the classical central limit theorem for simple random walks) in that case, but the Liouville quantum gravity construction is still interesting.

As explained in \cite{dms2014mating}, this program allows us to interpret the results presented here to mean that the discrete loop-decorated surfaces of this paper converge to CLE-decorated Liouville quantum gravity in a topology where two configurations are close if the corresponding driving functions are close. Extensions of this result to stronger topologies (which encode lengths and intersection locations of loop configurations) have been established in works by (various subsets of) Cheng Mao, Ewain Gwynne, Jason Miller, and Xin Sun \cite{gwynnemaosun, finitevolumeestimates, strongertopology, finitevolumelimit}. See also the related results in \cite{berestyckilaslierray, 2015arXiv150201013C}.

Moving from this kind of convergence to convergence in topologies that more directly encode the metric space and/or conformal structure of the random surfaces themselves appears difficult, but conceivably possible.  One way to do this would be to show that if one couples a discrete loop-decorated surface with a CLE-decorated LQG surface in such a way that their driving functions are likely to be close, then it is likely that the two surfaces are also in some sense close as metric spaces, or as Riemannian manifolds with a conformal structure.   We expect statements of this kind to be true, but proving them will require new ideas.

Since we would ultimately like to extend the results presented here to other topologies, we conclude with one relevant question.  If two discrete loop-decorated surfaces $\mathcal M_1$ and $\mathcal M_2$ are conditioned to have exploration driving functions that are close in the $L^\infty$ sense for all time, is it then the case that (with high probability) one can conformally map $\mathcal M_1$ to $\mathcal M_2$ in such a way that the image of the exploration path in $\mathcal M_1$ is close to the exploration path in $\mathcal M_2$ (at least on a fixed compact interval of time)?  This question is already interesting in the case that $p=0$.  The results by Gill and Rohde in \cite{2011arXiv1101.1320G} about Brownian motion on these random surfaces, and the parabolicity of the infinite volume surfaces, constitute a promising step in this direction.

\bibliographystyle{halpha}
\bibliography{hamburger}
\end{document}